\newtheorem{lemma}{Lemma}[section]
\newtheorem{theorem}[lemma]{Theorem}
\newcommand{\N}{\mathbb{N}}
\newcommand{\Z}{\mathbb{Z}}
\newcommand{\R}{\mathbb{R}}
\newcommand{\Rplus}{\R_+}
\newcommand{\Bcal}{\mathcal{B}}
\newcommand{\Ccal}{\mathcal{C}}
\newcommand{\Ecal}{\mathcal{E}}
\newcommand{\Fcal}{\mathcal{F}}
\newcommand{\Kcal}{\mathcal{K}}
\newcommand{\Rcal}{\mathcal{R}}
\newcommand{\Scal}{\mathcal{S}}
\newcommand{\Tcal}{\mathcal{T}}
\newcommand{\sub}[2]{\mathrm{Sub}(#1, #2)}
\newcommand{\cop}{\mathrm{COP}}           
\newcommand{\copc}{\mathrm{COP}_{\rm c}}  
\newcommand{\cp}{\mathrm{CP}}             
\newcommand{\kpcone}{\mathcal{C}}         
\newcommand{\vzcone}{\mathcal{Q}}         
\newcommand{\bqp}{\mathrm{BQP}}           
\newcommand{\lass}[1]{\mathrm{lass}_{#1}}  
\newcommand{\tp}{\mathsf{T}}        
\newcommand{\orto}{\mathrm{O}}      
\newcommand{\csym}{C_{\rm sym}}     
\newcommand{\lsym}{L^2_{\rm sym}}   
\newcommand{\rey}[1]{\Rcal_{#1}}    
\newcommand{\one}{\mathbf{1}}       
\DeclareMathOperator{\cone}{cone}   
\DeclareMathOperator{\conv}{conv}   
\DeclareMathOperator{\trace}{tr}    
\DeclareMathOperator{\aut}{Aut}     
\newcommand{\lhs}{\mathrm{lhs}}     
\newcommand{\stab}[2]{{\rm Stab}(#1, #2)}
\newcommand{\flatten}[1]{\llbracket #1\rrbracket}
\newcommand{\invflatten}[2]{\llbracket #1\rrbracket_{#2}^{-1}}
\newenvironment{optprob}
{
  \arraycolsep=0pt
  \begin{array}{r@{\ }l@{\quad}l}
}%
{
  \end{array}
}
\newcommand{\onerow}[1]{\multicolumn{2}{l}{#1}}
\newlength\claimlen
\newenvironment{claim}
{
  \begin{equation}%
    \setbox0=\hbox{\theequation}%
    \setlength\claimlen{\displaywidth-2\parindent}%
    \addtolength\claimlen{-\wd0}%
    \addtolength\claimlen{-9pt}%
    \begin{minipage}{\claimlen}%
}%
{
  \end{minipage}%
  \end{equation}%
  \ignorespacesafterend%
}
\newcommand{\defi}[1]{\textit{#1}}
\title{Optimization hierarchies for distance-avoiding sets in compact spaces}
\author{Bram Bekker}
\address{A.J.F. Bekker, Delft Institute of Applied Mathematics,
  Delft University of Technology, Mekelweg~4, 2628~CD Delft, The
  Netherlands.}
\email{B.Bekker@tudelft.nl}
\author{Olga Kuryatnikova}
\address{O. Kuryatnikova, Department of Econometrics, Erasmus School of
  Economics, Burg.\ Oudlaan~50, 3062~PA Rotterdam, the Netherlands.}
\email{kuryatnikova@ese.eur.nl}
\author{Fernando Mário de Oliveira Filho}
\address{F.M. de Oliveira Filho, Delft Institute of Applied Mathematics,
  Delft University of Technology, Mekelweg~4, 2628~CD Delft, The
  Netherlands.}
\email{F.M.deOliveiraFilho@tudelft.nl}
\author{Juan C. Vera}
\address{J.C. Vera, Department of Econometrics and Operations Research, Tilburg
  University, Tilburg, The Netherlands.}
\email{J.C.VeraLizcano@tilburguniversity.edu}
\thanks{The first author is supported by the grant OCENW.KLEIN.024 of the Dutch
  Research Council (NWO)}
\subjclass[2010]{46N10, 52C10, 51K99, 90C22, 90C34}
\date{September 16, 2025}
\begin{document}

\begin{abstract}
  Witsenhausen's problem asks for the maximum fraction~$\alpha_n$ of the
  $(n-1)$-dimensional unit sphere that can be covered by a measurable set containing
  no pairs of orthogonal points.  The best upper bounds for~$\alpha_n$ are given
  by extensions of the Lovász theta number.  In this paper, optimization
  hierarchies based on the Lovász theta number, like the Lasserre hierarchy, are
  extended to Witsenhausen's problem and similar problems.  These hierarchies
  are shown to converge and are used to compute the best upper bounds
  for~$\alpha_n$ in low dimensions.
\end{abstract}

\maketitle
\markboth{B. Bekker, O. Kuryatnikova, F.M. de Oliveira Filho, and
  J.C. Vera}{Optimization hierarchies for distance-avoiding sets in compact
  spaces}

\setcounter{tocdepth}{1}
\tableofcontents


\section{Introduction}%
\label{sec:intro}

A set of points on the unit sphere~$S^{n-1} = \{\, x \in \R^n : \|x\|=1\,\}$
\defi{avoids orthogonal pairs} if it does not contain pairs of orthogonal vectors.
Witsenhausen's problem~\cite{Witsenhausen1974} asks for the maximum density that
a measurable subset of~$S^{n-1}$ can have if it avoids orthogonal pairs; namely,
we want to determine the parameter
\[
  \alpha_n = \sup\{\, \omega(I)/\omega_n : \text{$I \subseteq S^{n-1}$ is
    measurable and avoids orthogonal pairs}\,\},
\]
where~$\omega$ is the surface measure on the sphere and~$\omega_n$ is the total
measure of the sphere.

Denote by~$x\cdot y$ the Euclidean inner product between~$x$, $y \in \R^n$.
Fix~$e \in S^{n-1}$.  Witsenhausen~\cite{Witsenhausen1974} observed that the
union of two open antipodal spherical caps of spherical radius~$\pi/4$, namely
the set
\[
  \{\, x \in S^{n-1} : |e\cdot x| > \cos(\pi/4)\,\},
\]
avoids orthogonal pairs, hence~$\alpha_n$ is at least the density of this set,
which is $O(n^{-1/2} 2^{-n/2})$.  Kalai~\cite[Conjecture~2.8]{Kalai2015}
conjectured that this construction is optimal, that is,~$\alpha_n$ is exactly
the density of these two caps; this is known as the \defi{double-cap
  conjecture}.  A version of the double-cap conjecture for the complex sphere
has an interpretation in quantum information theory~\cite{Montina2011}.

The canonical basis vectors in~$\R^n$ are~$n$ pairwise-orthogonal unit vectors.
Any set that avoids orthogonal pairs can contain at most one of these vectors.
It then follows from a simple averaging argument that~$\alpha_n \leq 1/n$. This
upper bound was also given by Witsenhausen~\cite{Witsenhausen1974}; it is quite
far from the lower bound of the double-cap conjecture for all~$n \geq 3$.
For~$n = 2$, the lower and upper bounds coincide.  Frankl and
Wilson~\cite{FranklW1981} were the first to give an asymptotic upper bound
for~$\alpha_n$ that decreases exponentially fast with the dimension~$n$.

On the sphere, distance and inner product are related: a set of points on the
sphere avoids orthogonal pairs if and only if it avoids pairs of points at
distance~$\sqrt{2}$.  More generally, let~$V$ be a metric space with metric~$d$
and let~$D \subseteq (0, \infty)$ be a set of \defi{forbidden} distances.  We
say that a set~$I \subseteq V$ \defi{avoids} the distances in~$D$ or that it is
a \defi{$D$-avoiding set} if~$d(x, y) \notin D$ for all~$x$, $y \in I$.  One is
usually interested in the maximum size of distance-avoiding sets, which has to
be appropriately defined depending on~$V$.  Witsenhausen's problem asks for the
maximum density of a $\sqrt{2}$-avoiding set on the sphere equipped with the
Euclidean distance.

In Euclidean space, $1$-avoiding sets are of particular interest.  The maximum
density of $1$-avoiding sets in~$\R^n$ has been extensively studied (see
DeCorte, Oliveira, and Vallentin~\cite{DeCorteOV2022} for references),
especially in its relation to the chromatic number of Euclidean space.

Distance-avoiding sets can be modeled as independent sets of graphs.
Let~$G = (V, E)$ be a graph.  A subset of~$V$ is \defi{independent} if it does
not contain any edges.  The \defi{independence number} of~$G$ is
\[
  \alpha(G) = \max\{\, |I| : \text{$I \subseteq V$ is independent}\,\}.
\]
Computing the independence number of a graph is a well-known NP-hard problem.

Given a metric space~$V$ with metric~$d$ and a set~$D$ of forbidden distances,
let~$G$ be the graph with vertex set~$V$ in which~$x$, $y \in V$ are adjacent
if~$d(x, y) \in D$; we call~$G$ a \defi{distance graph}.  The independent sets
of~$G$ are exactly the $D$-avoiding sets.

Witsenhausen's problem can therefore be seen as an independent-set problem on a
distance graph over the sphere.  There are several optimization hierarchies
based on the Lovász theta number~\cite{Lovasz1979} that are used to bound the
independence number of finite graphs from above.  In this paper, we extend some
of these hierarchies from finite graphs to infinite graphs like the one related
to Witsenhausen's problem; among the hierarchies we extend is the Lasserre
hierarchy.  We prove convergence of these hierarchies and use them to compute
better upper bounds for~$\alpha_n$; see Table~\ref{tab:bounds}.

Optimization hierarchies have been used to obtain upper bounds for geometrical
parameters before; the first and perhaps most famous example is the 3-point
bound of Bachoc and Vallentin~\cite{BachocV2008} for the kissing number problem.
Hierarchies provide some of the best, and often sharp, bounds in many
circumstances.  This is the first use of optimization hierarchies to obtain
upper bounds for Witsenhausen's problem.

\begin{table}[t]
  \begin{center}
    \begin{tabular}{ccccc}
      &                    &\multicolumn{2}{c}{\textsl{Previous upper bound}}\\
      $n$&\textsl{Lower bound}&\textsl{Simple}&\textsl{Best}&\textsl{New upper bound}\\
      3 & 0.2928\ldots  & 0.3333\ldots  & 0.30153 & 0.297742\\
      4 & 0.1816\ldots  & 0.25  & 0.21676 & 0.194297\\
      5 & 0.1161\ldots  & 0.2  & 0.16765  & 0.134588\\
      6 & 0.0755\ldots  & 0.1666\ldots  & 0.13382  & 0.098095\\
      7 & 0.0498\ldots  & 0.1428\ldots  & 0.11739 & 0.075751\\
      8 & 0.0331\ldots  & 0.125 & 0.09981 & 0.061178
    \end{tabular}
  \end{center}
  \medskip

  \caption{Lower and upper bounds for Witsenhausen's parameter~$\alpha_n$.  The
    lower bound is given by the double-cap conjecture.  The simple upper bound
    was given by Witsenhausen~\cite{Witsenhausen1974} and is just~$1/n$; the
    best previous upper bounds are by DeCorte, Oliveira, and
    Vallentin~\cite{DeCorteOV2022}.  The new upper bounds are derived
    in~\S\ref{sec:computations}; see also Table~\ref{tab:detailed}.}%
  \label{tab:bounds}
\end{table}


\subsection{Infinite geometrical graphs}%
\label{sec:infinite-graphs}

Since we work with distance graphs on the sphere, it is more natural to talk
about forbidden inner products than forbidden distances.  For a
set~$D \subseteq [-1, 1]$ of forbidden inner products, denote by $G(S^{n-1}, D)$
the graph whose vertex set is~$S^{n-1}$ and in which two vertices~$x$,
$y \in S^{n-1}$ are adjacent if~$x\cdot y \in D$.

The independent sets of~$G(S^{n-1}, D)$ are precisely those sets that do not
contain pairs of points with inner product in~$D$.  For instance, the
independent sets of~$G = G(S^{n-1}, (1/2, 1))$ are exactly the sets of points on
the sphere any two of which form an angle of at least~$\pi/3$.  If we take an
independent set of~$G$ and place on each point in the set a unit sphere touching
it, we get a collection of nonoverlapping unit spheres that touch the central
unit sphere.  Conversely, any set of nonoverlapping unit spheres touching the
central unit sphere corresponds to an independent set of~$G$.  Hence the
independence number of~$G$ is precisely the \defi{kissing number} of~$\R^n$,
which is the maximum number of nonoverlapping unit spheres that can
simultaneously touch a central unit sphere.

The independent sets of the graph~$G = G(S^{n-1}, \{0\})$, which we call
\defi{Witsenhausen's graph}, are precisely the subsets of~$S^{n-1}$ that avoid
orthogonal pairs.  These sets can be infinite, and so the independence number
of~$G$ is infinite.  In this case we consider the \defi{measurable independence
  number}
\[
  \alpha_\omega(G) = \sup\{\, \omega(I) : \text{$I \subseteq S^{n-1}$ is
    measurable and independent}\,\}.
\]
Witsenhausen's parameter~$\alpha_n$ is precisely~$\alpha_\omega(G) / \omega_n$.

The graphs~$G(S^{n-1}, (1/2, 1))$ and~$G(S^{n-1}, \{0\})$ are fundamentally
different.  In the first, every vertex is contained in an open clique, and the
independence number is finite.  In the second, every vertex is contained in an
open independent set, and the independence number is infinite.

A \defi{topological graph} is a graph whose vertex set is a topological space.
The graph~$G(S^{n-1}, (1/2, 1))$ is an example of a \defi{topological packing
  graph}~\cite{LaatV2015}; these are topological graphs in which every finite
clique is a subset of an open clique.  Topological packing graphs with compact
vertex sets always have finite independence number.

The graph~$G(S^{n-1}, \{0\})$ is, in contrast, an example of a \defi{locally
  independent graph}~\cite{DeCorteOV2022}; these are topological graphs in which
every compact independent set is a subset of an open independent set.  If the
topology on~$V$ is not discrete, then locally independent graphs have infinite
independent sets, and concepts like the measurable independence number are used
instead of the independence number in these cases.  In this paper we focus on
optimization hierarchies for upper bounds for the measurable independence number
of locally independent graphs with compact vertex sets.


\subsection{Hierarchies}

The \defi{Lovász theta number} of a finite graph~$G = (V, E)$ gives an upper
bound for the independence number of~$G$.  One of its many definitions is as the
optimal value of a semidefinite programming problem:
\begin{equation}%
  \label{opt:finite-theta}
  \begin{optprob}
    \vartheta(G) = \max&\sum_{x, y \in V} A(x, y)\\
    &\sum_{x \in V} A(x, x) = 1,\\
    &A(x, y) = 0\quad\text{if~$xy \in E$,}\\
    &\text{$A\colon V^2 \to \R$ is positive semidefinite.}
  \end{optprob}
\end{equation}
Lovász~\cite{Lovasz1979} defined the theta number and showed
that~$\alpha(G) \leq \vartheta(G)$.  Indeed, if~$I \subseteq V$ is a nonempty
independent set, then the matrix~$A(x, y) = |I|^{-1} \chi_I(x) \chi_I(y)$,
where~$\chi_I$ is the characteristic vector of~$I$, is a feasible solution
of~\eqref{opt:finite-theta} with objective value~$|I|$, as we wanted.
Since~\eqref{opt:finite-theta} can be solved in polynomial
time~\cite[Theorem~9.3.30]{GrotschelLS1988}, it provides a polynomial-time
computable upper bound for the independence number.

There are basically two approaches to strengthen the Lovász theta number,
bringing it closer to the independence number.  The first approach is to
consider higher-order interactions: in the theta number, we consider
interactions between pairs of vertices; in higher-order bounds, we consider
interactions between more than two vertices at once.  The second approach is to
change the cone where~$A$ lies: in the theta number,~$A$ lies in the cone of
positive-semidefinite matrices, but we can consider subcones of this cone
instead.

The Lasserre hierarchy~\cite{Lasserre2001} is an example of the first approach.
In it, instead of considering matrices indexed by vertices, we consider matrices
indexed by subsets of vertices.  Let~$\sub{V}{k}$ denote the set of subsets
of~$V$ with cardinality at most~$k$.  We say that a
matrix~$A\colon \sub{V}{k}^2 \to \R$ is a \defi{moment matrix} if~$A(S, T)$
depends only on~$S \cup T$.  The \defi{$k$th level} of the \defi{Lasserre
  hierarchy} for~$G$ is the optimization problem
\begin{equation}%
  \label{opt:finite-lasserre}
  \begin{optprob}
    \lass{k}(G) = \max&\sum_{x \in V} A(\{x\}, \{x\})\\
    &A(\emptyset, \emptyset) = 1,\\
    &A(S, T) = 0\quad\text{if~$S \cup T$ is not independent,}\\
    &\text{$A\colon \sub{V}{k}^2 \to \R$ is a positive-semidefinite}\\
    &\qquad\text{moment matrix.}
  \end{optprob}
\end{equation}

Each level of the Lasserre hierarchy gives an upper bound for the independence
number.  Indeed, for an independent set~$I \subseteq V$ let~$A(S, T) = 1$
if~$S \cup T \subseteq I$ and~$A(S, T) = 0$ otherwise.  Then~$A$ is a feasible
solution of~\eqref{opt:finite-lasserre} with objective value~$|I|$.
Moreover,~$\lass{k}(G)$ can be computed in polynomial time for any fixed~$k$.

The first level coincides with the Lovász theta number:
$\lass{1}(G) = \vartheta(G)$.  Each following level is at least as strong as the
one before, and the hierarchy \textit{converges}, even in a finite number of
steps: if~$k \geq \alpha(G)$, then~$\lass{k}(G) = \alpha(G)$.

The completely positive formulation is an example of the second approach.  In
it, we replace in~\eqref{opt:finite-theta} the cone of positive-semidefinite
matrices by the cone of \defi{completely positive} matrices, namely
\[
  \cp(V) = \cone\{\, f f^\tp : \text{$f \in \R^V$, $f \geq 0$}\,\};
\]
call the optimal value of the resulting problem~$\vartheta(G, \cp(V))$.  The
same proof that~$\vartheta(G) \geq \alpha(G)$ given above shows that
$\vartheta(G, \cp(V)) \geq \alpha(G)$.

De Klerk and Pasechnik~\cite{KlerkP2007} observed that a theorem of Motzkin and
Straus~\cite{MotzkinS1965} implies that $\vartheta(G, \cp(V)) = \alpha(G)$.
From a computational perspective, at least at first glance, this result is not
that interesting: computing the independence number is NP-hard, and so all this
tells us is that the completely positive cone is computationally hard.  There
are several hierarchies of tractable outer approximations of the completely
positive cone, however, and using these we obtain converging
hierarchies of upper bounds for the independence number.
\medbreak
\centerline{$*$\ $*$\ $*$}
\medbreak

Bachoc, Nebe, Oliveira, and Vallentin~\cite{BachocNOV2009} observed that the
linear programming bound of Delsarte, Goethals, and Seidel~\cite{DelsarteGS1977}
is an extension of the Lovász theta number\footnote{Actually, of the
  \textit{theta prime} number, which is~\eqref{opt:finite-theta} with the extra
  constraint that~$A$ should be entrywise nonnegative.} to the topological
packing graph~$G(S^{n-1}, (1/2, 1))$.  The linear programming bound of Cohn and
Elkies for the sphere-packing problem~\cite{CohnE2003}, recently used to
determine the optimal sphere packings in dimensions~8 and~24~\cite{CohnKMRV2017,
  Viazovska2017}, is likewise an extension of the Lovász theta number to the
topological packing graph with vertex set~$\R^n$ in which~$x$, $y \in \R^n$ are
adjacent if~$0 < \|x-y\| < 1$.

Bachoc and Vallentin~\cite{BachocV2008} were the first to consider higher-order
bounds for geometric problems: their \textit{3-point bound} for the kissing
number problem can be seen~\cite{LaatMOV2022} as an intermediate step between
the first and second levels of the Lasserre hierarchy.  Later, De Laat and
Vallentin~\cite{LaatV2015} extended the Lasserre hierarchy to compact
topological packing graphs and proved convergence; more recently, De Laat, De
Muinck Keizer, and Machado~\cite{LaatMM2022} computed higher levels of the
Lasserre hierarchy for specific topological packing graphs arising from the
equiangular-lines problem.  De Laat, Machado, Oliveira, and
Vallentin~\cite{LaatMOV2022} introduced a \textit{$k$-point bound} for
topological packing graphs that lies between the steps of the Lasserre hierarchy
and that for~$k = 3$ reduces to the 3-point bound of Bachoc and Vallentin.

Cohn, De Laat, and Salmon~\cite{CohnLS2022} introduced a 3-point bound for the
sphere-packing problem, extending the 3-point bound of Bachoc and Vallentin to
topological packing graphs on the Euclidean space, which is not compact.  Cohn
and Salmon~\cite{CohnS2021} extended the Lasserre hierarchy to these graphs.

Dobre, Dür, Frerick, and Vallentin~\cite{DobreDFV2016} defined an
infinite-dimensional analogue of the copositive cone, which is the dual of the
completely positive cone, and used it to give an exact formulation for the
independence number of topological packing graphs.  In this way, they extended
the result of Motzkin and Straus~\cite{MotzkinS1965} mentioned above.
Kuryatnikova and Vera~\cite{Kuryatnikova2019} defined a hierarchy of inner
approximations of the infinite-dimensional copositive cone and used it to give a
converging hierarchy of upper bounds for the independence number of topological
packing graphs.

The picture for topological packing graphs is thus quite complete: on the
theoretical side, we have appropriate extensions of the Lasserre hierarchy and
of the completely positive/copositive formulations for such graphs, both in the
compact and noncompact settings.  On the practical side, we can use some of these
formulations to obtain better bounds in many cases of interest.

For locally independent graphs, the picture is less clear.  Bachoc, Nebe,
Oliveira, and Vallentin~\cite{BachocNOV2009} extended the Lovász theta number to
locally independent graphs on the sphere, like~$G(S^{n-1}, \{0\})$, and used it
to compute new bounds for the measurable chromatic number of Euclidean
space\footnote{The \defi{measurable chromatic number} of~$\R^n$ is the minimum
  number of colors needed to color the points of~$\R^n$ so no two points at
  distance~1 have the same color and so the sets of points having a same color
  are Lebesgue measurable.}.  Oliveira and Vallentin~\cite{OliveiraV2010} then
extended the theta number to locally independent graphs on~$\R^n$, namely to the
\defi{unit-distance graph} whose vertex set is~$\R^n$ and in which~$x$,
$y \in \R^n$ are adjacent if~$\|x-y\|=1$, and obtained better lower bounds for
the measurable chromatic number.

DeCorte, Oliveira, and Vallentin~\cite{DeCorteOV2022} extended the completely
positive formulation to locally independent graphs on compact spaces and on
Euclidean space, and proved that the formulation is exact, that is, that it
gives exactly the independence number.  The best upper bounds for Witsenhausen's
parameter~$\alpha_n$ are obtained by this technique.

In this paper, we extend the Lasserre hierarchy and the related $k$-point bound
hierarchy of De Laat, Machado, Oliveira, and Vallentin~\cite{LaatMOV2022} to
locally independent graphs on compact spaces.  We also extend the
copositive/completely positive hierarchy of Kuryatnikova and
Vera~\cite{Kuryatnikova2019}, and show that it converges for many locally
independent graphs, and in particular for Witsenhausen's
graph~$G(S^{n-1}, \{0\})$.  By comparing the $k$-point bound and the Lasserre
hierarchy to the completely positive hierarchy, we show that they also converge.

This is the first time a converging optimization hierarchy is proposed for
locally independent graphs.  We use the completely positive hierarchy to compute
the best upper bounds for Witsenhausen's parameter~$\alpha_n$ known to date; see
Table~\ref{tab:bounds} and the more detailed Table~\ref{tab:detailed}.


\section{Preliminaries}

Independent sets and the independence number of a graph are defined
in~\S\ref{sec:intro}.  A \defi{topological graph} is a graph~$G = (V, E)$
where~$V$ is a topological space.  When~$G$ is a topological graph, we see~$E$
as a symmetric subset of~$V \times V$.  Topological packing graphs and locally
independent graphs are defined in~\S\ref{sec:infinite-graphs}.

Let~$G = (V, E)$ be a graph where~$V$ is a measure space with measure~$\omega$.
The \defi{measurable independence number} of~$G$ is
\[
  \alpha_\omega(G) = \sup\{\, \omega(I) : \text{$I \subseteq V$ is independent
    and measurable}\,\}.
\]
We denote by~$\aut(G)$ the group of automorphisms (that is, edge-preserving
bijections~$V \to V$) of~$G$.

Given a set~$S$ and~$X \subseteq S$, we denote by~$\chi_X\colon S \to \R$ the
characteristic function of~$X$, which is such that~$\chi_X(x) = 1$ if~$x \in X$
and~$\chi_X(x) = 0$ otherwise.

We denote by~$\one$ the constant-one function or vector and by~$J$ the
constant-one kernel (see below) or matrix.


\subsection{Functional analysis}%
\label{sec:func-analysis}

All function spaces we consider are real valued.  Let~$V$ be a measure space
equipped with a measure~$\omega$.  Unless otherwise noted, we always
equip~$L^2(V)$ with the inner product
\[
  \langle f, g\rangle = \int_V f(x) g(x)\, d\omega(x).
\]
The~$L^p$ norm of a function~$f$ is denoted by~$\|f\|_p$.  If we say that a
measurable function is nonnegative, we mean that it is nonnegative almost
everywhere.

The functions in~$L^2(V^2)$ are called \defi{kernels}.  A
kernel~$K \in L^2(V^2)$ corresponds to the operator~$K\colon L^2(V) \to L^2(V)$
such that
\[
  (Kf)(x) = \int_V K(x, y) f(y)\, d\omega(y)
\]
for all~$f \in L^2(V)$.  The operator~$K$ is self adjoint if and only if~$K$ is
\defi{symmetric}, that is,~$K(x, y) = K(y, x)$ almost everywhere.  We denote the
set of symmetric kernels by~$\lsym(V)$.

We say that a kernel~$K \in \lsym(V)$ is \defi{positive semidefinite}
if~$\langle Kf, f\rangle \geq 0$ for all~$f \in L^2(V)$.
Bochner~\cite{Bochner1941} gives a useful characterization of continuous
positive semidefinite kernels that we use throughout.  Namely, if~$V$ is a
compact Hausdorff space and if~$\omega$ is a Radon measure that is positive on
open sets, then~$K$ is positive semidefinite if and only
if~$\bigl(K(x, y)\bigr)_{x,y \in U}$ is a positive-semidefinite matrix for every
finite set~$U \subseteq V$.

We denote by~$C(V)$ the space of real-valued continuous functions on~$V$ and
by~$\csym(V)$ the space of continuous symmetric kernels on~$V^2$.

Given functions~$f\colon U \to \R$ and~$g\colon V \to \R$, we denote
by~$f\otimes g$ the function from~$U \times V$ to~$\R$ such
that~$(f\otimes g)(x, y) = f(x) g(y)$.

There are two natural topologies to consider on~$L^2(V)$.  The first is the
\defi{weak topology}.  In this topology, a net~$(f_\alpha)$ in~$L^2(V)$
converges to~$f \in L^2(V)$ if $(\langle f_\alpha, g\rangle)$ converges
to~$\langle f, g\rangle$ for all~$g \in L^2(V)$.  The second is the
\defi{$L^2$-norm topology} given by the~$L^2$ norm~$\|\cdot\|_2$.  The set of
continuous linear functionals is the same for both these topologies.  As a
consequence, if~$S \subseteq L^2(V)$ is a convex set, then its closure is the
same whether taken in the weak topology or the $L^2$-norm
topology~\cite[Theorem~5.2(iv)]{Simon2011}.


\subsection{Spaces of subsets}

Let~$V$ be a set.  For an integer~$k \geq 0$, denote by~$\sub{V}{k}$ the
collection of all subsets of~$V$ with cardinality at most~$k$.  For~$k \geq 1$
and~$v = (v_1, \ldots, v_k) \in V^k$, denote by~$\flatten{v}$ the
set~$\{v_1, \ldots, v_k\}$.  So, for every~$k \geq 1$ we have
that~$\flatten{\cdot}$ maps~$V^k$ to~$\sub{V}{k}$.  Note that~$k$ is superfluous
in the definition of~$\flatten{\cdot}$, but not in the definition of the
preimage.  Hence, given a set~$S \subseteq \sub{V}{k}$, we write
\[
  \invflatten{S}{k} = \{\, v \in V^k : \flatten{v} \in S\,\}.
\]

If~$V$ is a topological space, then we can introduce
on~$\sub{V}{k} \setminus \{\emptyset\}$ the quotient topology
of~$\flatten{\cdot}$ by declaring a set~$S$ open if~$\invflatten{S}{k}$ is open
in~$V^k$.  We define a topology on~$\sub{V}{k}$ by taking the disjoint union
with~$\{\emptyset\}$.  For background on the topology on the space of subsets,
see Handel~\cite{Handel2000}.

If~$V$ is a measure space equipped with a measure~$\omega$, then we can
turn $\sub{V}{k} \setminus \{\emptyset\}$ into a measure space by considering
the pushforward of~$\omega$ through~$\flatten{\cdot}$.  Namely, we declare a set~$S$
measurable if~$\invflatten{S}{k}$ is measurable in~$V^k$ with respect to the
product measure, and we set the measure of~$S$ to be the measure
of~$\invflatten{S}{k}$.  We always define the measure on~$\sub{V}{k}$ by setting
the measure of~$\{\emptyset\}$ to be~1, that is, if~$f\colon \sub{V}{k} \to \R$
is a continuous function, then
\[
  \int_{\sub{V}{k}} f(S)\, d\omega(S) = f(\emptyset) + \int_{V^k}
  f(\flatten{v})\, d\omega(v).
\]


\subsection{Topological groups}%
\label{sec:topo-groups}

A \defi{topological group} is a group~$\Gamma$ equipped with a topology in which
the group operations --- multiplication and inversion --- are continuous, that
is, $(\sigma, \tau) \mapsto \sigma \tau$ is a continuous function
from~$\Gamma \times \Gamma$ to~$\Gamma$ and~$\sigma \mapsto \sigma^{-1}$ is a
continuous function from~$\Gamma$ to~$\Gamma$.  We denote the identity element
by~$1$.  For us, topological groups are always Hausdorff spaces.

Let~$\Gamma$ be a locally compact group and~$V$ be a locally compact Hausdorff
space.  An \defi{action} of~$\Gamma$ on~$V$ is a continuous
map~$(\sigma, x) \mapsto \sigma x$ from~$\Gamma \times V$ to~$V$ such that
(i)~$x\mapsto \sigma x$ is a homeomorphism of~$V$ for every~$\sigma \in \Gamma$
and (ii)~$\sigma(\tau x) = (\sigma\tau)x$ for all~$\sigma$, $\tau \in \Gamma$
and~$x \in V$.  We call~$V$ a \defi{$\Gamma$-space} when there is an action
of~$\Gamma$ on~$V$.

The action is called \defi{transitive}, and~$V$ is called a \defi{transitive
  $\Gamma$-space}, if for every~$x$, $y \in V$ there is~$\sigma \in \Gamma$ such
that~$\sigma x = y$.  Let~$V$ be a transitive $\Gamma$-space and fix~$e \in V$;
consider the map~$p\colon \Gamma \to V$ such that~$p(\sigma) = \sigma e$.  Note
that~$p$ is surjective; if it is also open, mapping open subsets of~$\Gamma$ to
open subsets of~$V$, then we call~$V$ a \defi{homogeneous $\Gamma$-space}.
If~$\Gamma$ is $\sigma$-compact, then~$p$ is
open~\cite[Proposition~2.44]{Folland1995}.

If~$\Gamma$ acts on~$V$, then it also acts on functions~$f\colon V^k \to \R$ by
\[
  (\sigma f)(x_1, \ldots, x_k) = f(\sigma^{-1} x_1, \ldots, \sigma^{-1}
  x_k).
\]
We always consider this action on function spaces, unless otherwise noted.  We
say that a function~$f\colon V^k \to \R$ is \defi{$\Gamma$-invariant} (or simply
\defi{invariant} when the group is clear from context) if~$\sigma f = f$ for
all~$\sigma \in \Gamma$.

There is a useful relation between functions on~$\Gamma^k$ and functions
on~$V^k$ when~$V$ is a homogeneous $\Gamma$-space.  Namely, fix~$e \in V$.  From
a function~$f\colon V^k \to \R$ we can construct a
function~$g\colon \Gamma^k \to \R$ by setting
\[
  g(\sigma_1, \ldots, \sigma_k) = f(\sigma_1 e, \ldots, \sigma_k e).
\]
If~$f$ is continuous, then so is~$g$.  Moreover,
\begin{claim}%
  \label{claim:homo-func}
  if~$\sigma_i e = \tau_i e$ for~$i = 1$, \dots,~$k$, then~$g(\sigma_1,
  \ldots, \sigma_k) = g(\tau_1, \ldots, \tau_k)$.
\end{claim}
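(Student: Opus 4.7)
The plan is essentially to unfold the definition of $g$ and observe that there is nothing more to prove. By construction,
\[
  g(\sigma_1, \ldots, \sigma_k) = f(\sigma_1 e, \ldots, \sigma_k e),
\]
and analogously $g(\tau_1, \ldots, \tau_k) = f(\tau_1 e, \ldots, \tau_k e)$. Under the hypothesis $\sigma_i e = \tau_i e$ for each $i$, the two tuples $(\sigma_1 e, \ldots, \sigma_k e)$ and $(\tau_1 e, \ldots, \tau_k e)$ are the same point of $V^k$, so $f$ takes the same value on them, which gives the claimed equality.

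Conceptually, I would present the claim as the statement that $g$ factors through the evaluation map $p\colon \Gamma \to V$, $p(\sigma) = \sigma e$: one can write $g = f \circ (p \times \cdots \times p)$, and the displayed equation is precisely the well-definedness of this factorization. This factorization is what will later be used to transfer arguments back and forth between $\Gamma^k$ and $V^k$, exploiting the homogeneity assumption on $V$.

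There is no substantive obstacle here; no properties of the action, the topology, or the base point are needed beyond the definition of $g$. The transitivity and openness of $p$ that make $V$ a homogeneous $\Gamma$-space play no role in the claim itself — they are relevant only for the companion statements that let one lift functions on $V^k$ to functions on $\Gamma^k$ with good regularity, which is presumably the use to which this claim will be put immediately afterwards.
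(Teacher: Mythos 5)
Your proof is correct and matches the paper, which states this claim without further argument precisely because it is an immediate consequence of the definition $g(\sigma_1,\ldots,\sigma_k)=f(\sigma_1 e,\ldots,\sigma_k e)$: equal tuples $(\sigma_i e)=(\tau_i e)$ give equal values of $f$. Your remark that the claim is just the well-definedness of the factorization $g = f\circ(p\times\cdots\times p)$ is exactly the role it plays in the paper.
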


Conversely, given a function~$g$ satisfying~\eqref{claim:homo-func}, we can
define~$f\colon V^k \to \R$ by setting
\[
  f(x_1, \ldots, x_k) = g(\sigma_1, \ldots, \sigma_k),
\]
where the~$\sigma_i$ are any elements of~$\Gamma$ such that~$\sigma_i e = x_i$.
Like before, if~$g$ is continuous, then~$f$ is also continuous.  Indeed, given
an open set~$A \subseteq \R$, we want to show that~$f^{-1}(A)$ is open
in~$V^k$.  Consider the map~$p^k\colon \Gamma^k \to V^k$ such that
$p^k(\sigma_1, \ldots, \sigma_k) = (\sigma_1 e, \ldots, \sigma_k e)$.  Note that
$f^{-1}(A) = p^k(g^{-1}(A))$; since~$p^k$ is an open map, we are done.

Let~$\Gamma$ be a compact group.  We always normalize the Haar measure~$\mu$
on~$\Gamma$ so~$\mu(\Gamma) = 1$.  For a compact group, the Haar measure is both
left and right invariant, that is,~$\mu(\sigma X \tau) = \mu(X)$ for
all~$\sigma$, $\tau \in \Gamma$ and measurable~$X \subseteq \Gamma$; moreover,
the measure~$\mu_{-1}$ such
that~$\mu_{-1}(X) = \mu(\{\,\sigma^{-1} : \sigma \in X\,\})$ coincides
with~$\mu$.

If~$V$ is a homogeneous $\Gamma$-space, then~$V$ is in particular compact.  The
pushforward of~$\mu$ to~$V$ is the Radon measure~$\omega$ such that
\[
  \int_V f(x)\, d\omega(x) = \int_\Gamma f(\sigma e)\, d\mu(\sigma)
\]
for every integrable function~$f\colon V \to \R$ and every~$e \in V$.  This is a
$\Gamma$-invariant measure: $\omega(\sigma X) = \omega(X)$ for
all~$\sigma \in \Gamma$ and measurable~$X \subseteq V$; it is moreover the
unique $\Gamma$-invariant Radon measure on~$V$, up to a constant
factor~\cite[Theorem~2.49]{Folland1995}.

Say~$\Gamma$ is a compact group acting on a compact Hausdorff space~$V$;
let~$\mu$ be the Haar measure on~$\Gamma$.  The \defi{Reynolds operator} maps a
measurable function~$f\colon V \to \R$ to the $\Gamma$-invariant
function~$\rey{\Gamma} f$ such that
\[
  (\rey{\Gamma} f)(x) = \int_\Gamma f(\sigma x)\, d\mu(\sigma).
\]
When using the Reynolds operator, the action of~$\Gamma$ will always be clear
from the context.  Finally, if~$V$ is equipped with a finite measure invariant
under the action of~$\Gamma$, then the Reynolds operator is self adjoint, that
is, $\langle \rey{\Gamma} f, g\rangle = \langle f, \rey{\Gamma} g\rangle$.


\section{The Lasserre hierarchy and the \texorpdfstring{$k$}{k}-point bound}%
\label{sec:lass-k-point}

Let~$G = (V, E)$ be a topological graph and~$\omega$ be a Borel measure on~$V$.
For an integer~$k \geq 2$, let~$M\colon C(\sub{V}{2k}) \to C(\sub{V}{k}^2)$ be
the operator such that
\[
  (M\nu)(S, T) = \nu(S \cup T).
\]
Note that~$M\nu$ is indeed continuous since the union map
$(S, T) \mapsto S \cup T$ is continuous~\cite[Proposition~2.14]{Handel2000}.
The \defi{$k$th level} of the \defi{Lasserre hierarchy} for~$G$ is the
optimization problem
\[
  \begin{optprob}
    \lass{k}(G) = \sup&\onerow{\int_V \nu(\{x\}) d\omega(x)}\\
    &\nu(\emptyset) = 1,\\
    &\nu(S) = 0&\text{if~$S \in \sub{V}{2k}$ is not independent,}\\
    &\onerow{\text{$M\nu$ is positive semidefinite,}}\\
    &\onerow{\nu \in C(\sub{V}{2k}).}
  \end{optprob}
\]
Depending on~$G$, this problem could be infeasible.  We denote by~$\lass{k}(G)$
both the optimal value of the problem above and the problem itself, and we
follow the same convention for other optimization problems below.

For an integer~$k \geq 2$ and a set~$Q \subseteq V$ with~$|Q| \leq k - 2$,
let~$M_Q\colon C(\sub{V}{k}) \to C(\sub{V}{1}^2)$ be the operator such that
\[
  (M_Q\nu)(S, T) = \nu(Q \cup S \cup T);
\]
again,~$M_Q\nu$ is indeed continuous.  The \defi{$k$-point bound} for~$G$ is the
optimization problem
\begin{equation}%
  \label{opt:k-point}
  \begin{optprob}
    \Delta_k(G) = \sup&\onerow{\int_V \nu(\{x\})\, d\omega(x)}\\
    &\nu(\emptyset) = 1,\\
    &\nu(S) = 0&\text{if~$S \in \sub{V}{k}$ is not independent,}\\
    &\onerow{\text{$M_Q\nu$ is positive semidefinite for every independent}}\\
    &\onerow{\phantom{M_q\nu\ \ }\text{set~$Q \in \sub{V}{k-2}$,}}\\
    &\onerow{\nu \in C(\sub{V}{k}).}
  \end{optprob}
\end{equation}

The restriction of a feasible solution of~$\lass{k+1}(G)$ to~$\sub{V}{2k}$ is
continuous~\cite[Proposition~2.4]{Handel2000}, hence it is a feasible
solution of~$\lass{k}(G)$.  The same can be said about~$\Delta_{k+1}(G)$
and~$\Delta_k(G)$, and so we have
\[
  \lass{1}(G) \geq \lass{2}(G) \geq \cdots\qquad\text{and}\qquad
  \Delta_2(G) \geq \Delta_3(G) \geq \cdots.
\]
Moreover, it is immediate that~$\lass{k}(G) \leq \Delta_{k+1}(G)$, since
if~$\nu \in C(\sub{V}{2k})$ is a feasible solution of~$\lass{k}(G)$, then for
every independent set~$Q \subseteq V$ with~$|Q| \leq k-1$ and every~$S$,
$T \in \sub{V}{1}$ we have~$(M_Q\nu')(S, T) = (M\nu)(Q \cup S, Q \cup T)$,
where~$\nu'$ is the restriction of~$\nu$ to~$\sub{V}{k+1}$; it follows
that~$M_Q\nu'$ is positive semidefinite.

When~$V$ is finite with the discrete topology and the counting
measure,~$\lass{k}(G)$ is simply the Lasserre
hierarchy~\eqref{opt:finite-lasserre} for the independence number,
whereas~$\Delta_k(G)$ is closely related to restrictions of the Lasserre
hierarchy proposed by Gvozdenović, Laurent, and
Vallentin~\cite{GvozdenovicLV2009} and De Laat, Machado, Oliveira, and
Vallentin~\cite{LaatMOV2022}.  In this case, we
have~$\lass{k}(G) \geq \alpha(G)$ and~$\Delta_k(G) \geq \alpha(G)$ for all~$k$.

The proof is simple.  Given an independent set~$I \subseteq V$, let~$\nu(S) = 1$
if~$S \subseteq I$ and~$\nu(S) = 0$ otherwise for every~$S \in \sub{V}{2k}$.
Then~$\nu(\emptyset) = 1$ and~$\nu(S) = 0$ if~$S$ is not independent.  Moreover,
since~$\nu(\{x_1, \ldots, x_t\}) = \chi_I(x_1) \cdots \chi_I(x_t)$, we see
that~$M\nu$ is positive semidefinite.  So~$\nu$ is a feasible solution
of~$\lass{k}(G)$ with objective value~$|I|$, and since~$I$ is any independent
set we get~$\lass{k}(G) \geq \alpha(G)$.  In the same way one shows
that~$\Delta_k(G) \geq \alpha(G)$.

This same proof fails in general for infinite topological graphs, since the
function~$\nu$ constructed above will often not be continuous.  This issue can
be overcome with extra assumptions.

\begin{theorem}%
  \label{thm:upper-bound}
  Let~$G = (V, E)$ be a topological graph and let~$\Gamma\subseteq\aut(G)$ be a
  compact group.  If~$V$ is a homogeneous $\Gamma$-space and if~$\omega$ is the
  pushforward to~$V$ of the Haar measure on~$\Gamma$,
  then~$\lass{k}(G) \geq \alpha_\omega(G)$
  and~$\Delta_k(G) \geq \alpha_\omega(G)$ for all~$k$.
\end{theorem}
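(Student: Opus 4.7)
The plan is to imitate the finite-case argument by averaging over $\Gamma$, producing a $\Gamma$-invariant candidate. Given a measurable independent set $I \subseteq V$ with $\omega(I)$ close to $\alpha_\omega(G)$, define
\[
  \nu(\emptyset) = 1, \qquad
  \nu(S) = \int_\Gamma \prod_{x \in S} \chi_I(\sigma^{-1} x)\, d\mu(\sigma)
  \quad \text{for } S \neq \emptyset,
\]
where $\mu$ is the normalized Haar measure on $\Gamma$. The same formula, viewed on $\sub{V}{2k}$ or on $\sub{V}{k}$, provides a feasible solution for both $\lass_k(G)$ and $\Delta_k(G)$. The central algebraic identity used throughout is $\chi_I^2 = \chi_I$.

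Verifying the algebraic and positivity constraints is then routine. If $S$ contains an edge $\{x, y\}$, then each $\sigma \in \Gamma \subseteq \aut(G)$ preserves edges, so $\chi_I(\sigma^{-1}x)\chi_I(\sigma^{-1}y) = 0$ pointwise and $\nu(S) = 0$. Writing $F_\sigma(S) = \prod_{x \in S} \chi_I(\sigma^{-1}x)$, the identity $\chi_I^2 = \chi_I$ gives $\prod_{x \in S \cup T} \chi_I(\sigma^{-1}x) = F_\sigma(S) F_\sigma(T)$, and therefore
\[
  (M\nu)(S, T) = \int_\Gamma F_\sigma(S) F_\sigma(T)\, d\mu(\sigma),
\]
which is manifestly a positive-semidefinite Gram kernel. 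For $\Delta_k$, the same reasoning plus $F_\sigma(Q)^2 = F_\sigma(Q)$ gives $(M_Q\nu)(S,T) = \int_\Gamma \bigl(F_\sigma(Q)F_\sigma(S)\bigr)\bigl(F_\sigma(Q)F_\sigma(T)\bigr)\, d\mu(\sigma)$, again a Gram kernel. Finally, by $\Gamma$-invariance of $\omega$, $\nu(\{x\}) = \omega(I)$ is constant in $x$, and since $\omega(V) = \mu(\Gamma) = 1$ the objective value is exactly $\omega(I)$; letting $\omega(I)$ approach $\alpha_\omega(G)$ yields both $\lass_k(G) \geq \alpha_\omega(G)$ and $\Delta_k(G) \geq \alpha_\omega(G)$.

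The single substantive obstacle is showing that $\nu$ is continuous, despite being built from the merely measurable $\chi_I$; this is precisely what would fail if one tried replacing $\chi_I$ with a continuous approximation. I would work with the lift $\tilde\nu\colon V^{2k} \to \R$, $\tilde\nu(v_1, \ldots, v_{2k}) = \int_\Gamma \prod_i \chi_I(\sigma^{-1}v_i)\, d\mu(\sigma)$; the identity $\chi_I^2 = \chi_I$ ensures $\tilde\nu$ depends only on the set $\flatten{v}$, so it descends to a function on $\sub{V}{2k}$, and it is enough to prove continuity of $\tilde\nu$ on $V^{2k}$. Using the elementary inequality $|\prod_i a_i - \prod_i b_i| \leq \sum_i |a_i - b_i|$ for $a_i, b_i \in \{0,1\}$ inside the Haar integral, continuity at $v^0$ reduces to showing that $\int_\Gamma |\chi_I(\sigma^{-1}v_i) - \chi_I(\sigma^{-1}v_i^0)|\, d\mu(\sigma) \to 0$ as $v_i \to v_i^0$. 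Because $V$ is a homogeneous $\Gamma$-space, the orbit map $\Gamma \to V$ is open, so one can choose $\tau_i \in \Gamma$ with $\tau_i v_i^0 = v_i$ and $\tau_i \to 1$; a substitution in the Haar integral rewrites the above quantity as the symmetric-difference measure $\omega(I \triangle \tau_i I)$. The standard strong continuity of the $\Gamma$-action on $L^1(V, \omega)$ at the identity then delivers $\omega(I \triangle \tau_i I) \to 0$, which completes the proof.
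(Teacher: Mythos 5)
Your proposal is correct, and the candidate solution is exactly the paper's: you average the indicator tensor $\chi_I^{\otimes 2k}$ over $\Gamma$ (the paper writes this as $F=\rey{\Gamma}\chi_I^{\otimes 2k}$), observe that idempotence of $\chi_I$ makes the lift constant on fibers of $\flatten{\cdot}$, descend through the quotient topology on $\sub{V}{2k}$, and read off feasibility and the objective value $\omega(I)$; your Gram-kernel verification of the moment/positivity constraints and the edge argument match the paper's (routine) finite-case reasoning. The one place where you genuinely diverge is the continuity of the averaged tensor, which is the heart of the theorem. The paper proves a general lemma (Lemma~\ref{lem:tensor-reynolds}): for arbitrary $f_1,\dots,f_k\in L^k(V)$ the Reynolds average $\rey{\Gamma}(f_1\otimes\cdots\otimes f_k)$ is continuous, shown by lifting to $\Gamma$, approximating each $f_i$ by continuous functions in $L^k$-norm, and telescoping with Hölder; this lemma is then reused elsewhere (e.g.\ Lemma~\ref{lem:trace-class-cont} and \S\ref{sec:computations}). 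You instead exploit that $\chi_I$ is $[0,1]$-valued: the telescoping inequality reduces continuity to $\int_\Gamma|\chi_I(\sigma^{-1}v_i)-\chi_I(\sigma^{-1}v_i^0)|\,d\mu(\sigma)\to 0$, and you produce $\tau_i\to 1$ with $\tau_i v_i^0=v_i$ using openness of the orbit map (which is exactly the paper's definition of a homogeneous $\Gamma$-space). This buys a more elementary, self-contained argument for the theorem at hand, at the cost of generality. One small imprecision: the displayed integral is not literally $\omega(I\triangle\tau_i I)$ in the nonabelian setting, because $\sigma^{-1}\tau_i v_i^0$ is not a function of $\sigma^{-1}v_i^0$ alone; the clean fix is to lift to $\Gamma$ (as the paper does throughout), where with $H(\sigma)=\chi_I(\sigma v_i^0)$ and inversion-invariance of $\mu$ the quantity becomes $\|R_{\tau_i}H-H\|_{L^1(\Gamma)}=\mu\bigl(p^{-1}(I)\tau_i^{-1}\triangle p^{-1}(I)\bigr)$, and strong continuity of (right) translation on $L^1(\Gamma)$ finishes the argument exactly as you intend. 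With that adjustment your proof is complete.
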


The main step in the proof uses the following lemma.

\begin{lemma}%
  \label{lem:tensor-reynolds}
  Let~$\Gamma$ be a compact group and let~$V$ be a homogeneous
  $\Gamma$-space equipped with the pushforward of the Haar measure~$\mu$
  on~$\Gamma$.  If~$f_1$, \dots,~$f_k \in L^k(V)$, then the function
  \[
    \rey{\Gamma}(f_1 \otimes \cdots \otimes f_k)(x_1, \ldots, x_k) =
    \int_\Gamma f_1(\alpha x_1)\cdots f_k(\alpha x_k)\, d\mu(\alpha)
  \]
  from~$V^k$ to~$\R$ is continuous.
\end{lemma}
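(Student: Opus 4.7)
The plan is to prove the lemma by first treating the case of continuous $f_i$ and then passing to general $L^k$ functions via norm approximation. Since $V$ is a homogeneous $\Gamma$-space with $\Gamma$ compact, $V = \{\sigma e : \sigma \in \Gamma\}$ is itself compact. For continuous $g_1, \ldots, g_k \in C(V)$, the joint continuity of the action map $(\alpha, x) \mapsto \alpha x$ implies that
$$
F(\alpha, x_1, \ldots, x_k) = g_1(\alpha x_1) \cdots g_k(\alpha x_k)
$$
is continuous on the compact Hausdorff space $\Gamma \times V^k$, hence uniformly continuous. Integrating in $\alpha$ against the finite Haar measure $\mu$ therefore yields a continuous function of $(x_1, \ldots, x_k) \in V^k$, establishing the claim in the continuous case.

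For general $f_i \in L^k(V)$, the crucial observation is that for every $h \in L^k(V)$ and every $x \in V$,
$$
\int_\Gamma |h(\alpha x)|^k \, d\mu(\alpha) = \|h\|_k^k.
$$
Writing $x = \sigma e$ and using the right-invariance of $\mu$, the integral equals $\int_\Gamma |h(\alpha e)|^k \, d\mu(\alpha)$, which coincides with $\int_V |h(y)|^k \, d\omega(y) = \|h\|_k^k$ by the definition of $\omega$ as the pushforward of $\mu$. Applying the generalized Hölder inequality with all $k$ exponents equal to $k$ to the integrand $h_1(\alpha x_1) \cdots h_k(\alpha x_k)$ then yields the pointwise estimate
$$
\bigl|\rey{\Gamma}(h_1 \otimes \cdots \otimes h_k)(x_1, \ldots, x_k)\bigr| \leq \prod_{i=1}^k \|h_i\|_k,
$$
uniformly in $(x_1, \ldots, x_k) \in V^k$. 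A standard telescoping argument over the $k$ factors turns this into a uniform bound on $\rey{\Gamma}(f_1 \otimes \cdots \otimes f_k) - \rey{\Gamma}(g_1 \otimes \cdots \otimes g_k)$ in terms of the $L^k$-norms $\|f_i - g_i\|_k$ (and the $L^k$-norms of the $f_i$ and $g_i$).

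To conclude, I would choose continuous approximations $g_i^{(n)} \to f_i$ in $L^k(V)$, which exist by density of $C(V)$ in $L^k(V)$ (standard for a compact Hausdorff space with a Radon measure). The first step shows that each $\rey{\Gamma}(g_1^{(n)} \otimes \cdots \otimes g_k^{(n)})$ is continuous on $V^k$, and the telescoping bound from the second step shows that these functions converge uniformly on $V^k$ to $\rey{\Gamma}(f_1 \otimes \cdots \otimes f_k)$. Since a uniform limit of continuous functions on $V^k$ is continuous, the lemma follows. The core step, which I expect to be the main technical point, is the invariance identity converting $\int_\Gamma |h(\alpha x)|^k \, d\mu(\alpha)$ into the global $L^k$-norm $\|h\|_k^k$; this is precisely what makes the Hölder estimate uniform in the spatial variables and allows $L^k$-approximation to upgrade to uniform approximation on $V^k$.
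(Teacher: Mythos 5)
Your proof is correct and follows essentially the same route as the paper's: approximate each $f_i$ in $L^k$ by continuous functions, settle the continuous case, and then use a telescoping argument with Hölder's inequality and the invariance of the Haar measure to show the symmetrized function is a uniform limit of continuous functions. The only cosmetic difference is that the paper first reduces to $V=\Gamma$ and invokes uniform continuity of continuous functions on a compact group, while you work directly on $V$ using compactness of $\Gamma\times V^k$ for the continuous case and the identity $\int_\Gamma |h(\alpha x)|^k\,d\mu(\alpha)=\|h\|_k^k$ to make the Hölder bound uniform in the spatial variables.
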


\begin{proof}
In view of~\S\ref{sec:topo-groups}, it suffices to consider~$V = \Gamma$.
If~$f_1$, \dots,~$f_k$ are continuous, then they are left and right uniformly
continuous~\cite[Proposition~2.6]{Folland1995}, and the result is immediate.

Thus assume~$f_i \in L^k(\Gamma)$ and fix~$\epsilon > 0$.  Since continuous
functions are dense in~$L^k(\Gamma)$, let~$g_i \in C(\Gamma)$ be such
that~$\|f_i - g_i\|_k < \epsilon$.  Our goal is to find an upper bound for
\begin{equation}%
  \label{eq:sym-goal}
  \begin{split}
    &|\rey{\Gamma}(f_1 \otimes \cdots \otimes f_k)(\sigma_1, \ldots, \sigma_k)
    - \rey{\Gamma}(g_1 \otimes \cdots \otimes g_k)(\sigma_1, \ldots,
    \sigma_k)|\\
    &\qquad=\biggl|\int_\Gamma f_1(\alpha\sigma_1) \cdots f_k(\alpha\sigma_k)
    - g(\alpha\sigma_1) \cdots g(\alpha\sigma_k)\, d\mu(\alpha)\biggr|
  \end{split}
\end{equation}
that is uniform on~$\sigma_1$, \dots,~$\sigma_k \in \Gamma$.  If this bound goes
to zero with~$\epsilon$, then~$\rey{\Gamma}(f_1 \otimes \cdots \otimes f_k)$ is
a uniform limit of continuous functions, being therefore continuous, as we want.

Using the triangle inequality, we see that~\eqref{eq:sym-goal} is at most
\[
  \begin{split}
    &\int_\Gamma |f_1(\alpha\sigma_1) \cdots f_k(\alpha\sigma_k) -
    f_1(\alpha\sigma_1) \cdots
    f_{k-1}(\alpha\sigma_{k-1})g_k(\alpha\sigma_k)\\
    &\quad\qquad{}+f_1(\alpha\sigma_1) \cdots
    f_{k-1}(\alpha\sigma_{k-1})g_k(\alpha\sigma_k) - g_1(\alpha\sigma_1)
    \cdots g_k(\alpha\sigma_k)|\, d\mu(\alpha)\\
    &\quad\leq \int_\Gamma |f_1(\alpha\sigma_1) \cdots
    f_{k-1}(\alpha\sigma_{k-1})| |f_k(\alpha\sigma_k) -
    g_k(\alpha\sigma_k)|\, d\mu(\alpha)\\
    &\quad\qquad{}+ \int_\Gamma |f_1(\alpha\sigma_1) \cdots
    f_{k-1}(\alpha\sigma_{k-1}) - g_1(\alpha\sigma_1) \cdots
    g_{k-1}(\alpha\sigma_{k-1})| |g_k(\alpha\sigma_k)|\, d\mu(\alpha).
  \end{split}
\]

By repeating this procedure, we see that~\eqref{eq:sym-goal} is bounded from
above by a sum of~$k$ terms of the form
\begin{equation}%
  \label{eq:sym-goal-term}
  \int_\Gamma |f_i(\alpha\sigma_i) - g_i(\alpha\sigma_i)| |h_1(\alpha)|
  \cdots |h_{k-1}(\alpha)|\, d\mu(\alpha),
\end{equation}
where each~$h_j$ is one of the functions~$\alpha \mapsto f_l(\alpha\sigma_l)$
or~$\alpha \mapsto g_l(\alpha\sigma_l)$ for some~$l$.

A recursive application of Hölder's inequality shows that each
summand~\eqref{eq:sym-goal-term} is at most
\[
  \|f_i - g_i\|_k \|h_1\|_k \cdots \|h_{k-1}\|_k \leq \epsilon M^{k-1},
\]
where~$M = \epsilon + \max\{\|f_1\|_k, \ldots, \|f_k\|_k\}$.
So~\eqref{eq:sym-goal} is bounded from above by~$\epsilon k M^{k-1}$,
thus~$\rey{\Gamma}(f_1 \otimes \cdots \otimes f_k)$ is a uniform limit of
continuous functions and hence continuous.
\end{proof}

\begin{proof}[Proof of Theorem~\ref{thm:upper-bound}]
Fix an integer~$k \geq 1$ and let~$I \subseteq V$ be a measurable independent
set.  From Lemma~\ref{lem:tensor-reynolds} we know
that~$F = \rey{\Gamma}\chi_I^{\otimes 2k}$ is continuous.  Moreover,
since~$\chi_I$ is~$0$--$1$, we know that~$F(v)$ depends only on~$\flatten{v}$:
for all~$u$, $v \in V^{2k}$, if~$\flatten{u} = \flatten{v}$, then~$F(u) = F(v)$.
So we can define~$\nu\colon \sub{V}{2k} \to \R$ by setting~$\nu(\emptyset) = 1$
and~$\nu(\flatten{v}) = F(v)$ for all~$v \in V^{2k}$.

If we show that~$\nu$ is continuous, then it is clear that~$\nu$ is a feasible
solution of~$\lass{k}(G)$, and
so~$\lass{k}(G) \geq \int_V \nu(\{x\})\, d\omega(x) = \omega(I)$.  Since~$I$ is
any measurable independent set, it follows
that~$\lass{k}(G) \geq \alpha_\omega(G)$.

It suffices to show that the restriction~$\overline{\nu}$ of~$\nu$
to~$\sub{V}{2k}\setminus \{\emptyset\}$ is continuous.  So let~$A \subseteq \R$
be an open set; we want to show that~$\overline{\nu}^{-1}(A)$ is open.  This is
the case, by definition\footnote{Recall that for~$S \subseteq \sub{V}{n}$ we
  write $\invflatten{S}{n} = \{\, v \in V^n : \flatten{v} \in S\,\}$.},
if~$\invflatten{\overline{\nu}^{-1}(A)}{2k}$ is open in~$V^{2k}$.  Now note
that~$\invflatten{\overline{\nu}^{-1}(A)}{2k} = F^{-1}(A)$.  Since~$F$ is
continuous,~$F^{-1}(A)$ is open, and we are done.

The proof that~$\Delta_k(G) \geq \alpha_\omega(G)$ is similar.
\end{proof}


\section{A completely positive hierarchy}%
\label{sec:cop-hierarchy}

Let~$V$ be a compact Hausdorff space and~$\omega$ be a Radon measure on~$V$.
The \defi{copositive cone} on~$V$ is the convex cone
\[
  \cop(V) = \{\, A \in \lsym(V) :
  \text{$\langle Af, f\rangle \geq 0$ for all~$f \in L^2(V)$
    with~$f \geq 0$}\,\}.
\]
We say that a kernel is \defi{copositive} if it belongs to~$\cop(V)$.

Note that~$\cop(V)$ is closed in the~$L^2$-norm topology.  Since the weak
topology and the~$L^2$-norm topology have the same continuous linear
functionals, and since~$\cop(V)$ is convex, it is also closed in the weak
topology (see~\S\ref{sec:func-analysis}).

The dual of~$\cop(V)$ is the \defi{completely positive cone} on~$V$, namely
\[
  \cp(V) = \cop(V)^* = \{\, A \in \lsym(V) :
  \text{$\langle A, Z\rangle \geq 0$ for all~$Z \in \cop(V)$}\,\}.
\]
We say that a kernel is \defi{completely positive} if it belongs to~$\cp(V)$.

Let~$\Kcal \subseteq \lsym(V)$ be any closed convex cone containing~$\cp(V)$.
Given a graph~$G = (V, E)$, consider the optimization problem
\[
  \begin{optprob}
    \vartheta(G, \Kcal) = \sup&\langle A, J\rangle\\
    &\int_V A(x, x)\, d\omega(x) = 1,\\
    &A(x, y) = 0\quad\text{if~$xy \in E$,}\\
    &\onerow{\text{$A \in \Kcal$ is continuous,}}
  \end{optprob}
\]
where~$J$ is the constant-one kernel.

DeCorte, Oliveira, and Vallentin~\cite{DeCorteOV2022} showed that, if~$G$ is
locally independent, then~$\vartheta(G, \Kcal) \geq \alpha_\omega(G)$.  They
also showed that $\vartheta(G, \cp(V)) = \alpha_\omega(G)$ under some extra
assumptions, extending a result of Motzkin and Straus~\cite{KlerkP2002a,
  MotzkinS1965} to locally independent graphs.  An obvious choice for~$\Kcal$ is
the cone of positive semidefinite kernels.  In this case, we recover the
extension of the Lovász theta number introduced by Bachoc, Nebe, Oliveira, and
Vallentin~\cite{BachocNOV2009}.

For an integer~$r \geq 1$, consider the set
\[
  \kpcone_r(V) = \{\, A \in \lsym(V) : \rey{\Scal_{r+2}}(A \otimes
  \one^{\otimes r}) \geq 0\, \}.
\]
Here, the symmetric group~$\Scal_{r+2}$ acts on~$V^{r+2}$ by permuting
coordinates, so for $F\colon V^{r+2} \to \R$ we have
\[
  (\rey{\Scal_{r+2}} F)(x_1, \ldots, x_{r+2}) = \frac{1}{(r+2)!} \sum_{\pi
    \in \Scal_{r+2}} F(x_{\pi(1)}, \ldots, x_{\pi(r+2)}).
\]
Note that~$\kpcone_r(V)$ is a convex cone.

For~$A \in \lsym(V)$,
write~$\Tcal_r A = \rey{\Scal_{r+2}}(A \otimes \one^{\otimes r})$.  Use the
triangle inequality to show that~$\|\rey{\Scal_{r+2}} F\|_2 \leq \|F\|_2$
for~$F \in L^2(V^2)$; it follows that~$\Tcal_r A \in L^2(V^{r+2})$ for
all~$A \in \lsym(V)$, and so~$\Tcal_r$ is a linear transformation
from~$\lsym(V)$ to~$L^2(V^{r+2})$.

Consider the linear transformation~$\Tcal_r^*\colon L^2(V^{r+2}) \to \lsym(V)$
such that
\[
  (\Tcal_r^* F)(x, y) = \int_{V^r} (\rey{\Scal_{r+2}} F)(x, y, v)\,
  d\omega(v)
\]
for~$F \in L^2(V^{r+2})$ and~$x$, $y \in V$, where~$F(x, y, v)$ is short
for~$F(x, y, v_1, \ldots, v_r)$ and where we consider on~$V^r$ the product
measure.  One easily verifies that~$\Tcal_r^* F \in \lsym(V)$.  Direct
computation shows that
$\langle \Tcal_r A, F\rangle = \langle A, \Tcal_r^* F\rangle$.

Both~$\Tcal_r$ and~$\Tcal_r^*$ are continuous in the weak topology (and also in
the norm topology, though we do not need this fact).  Indeed, if~$(A_\alpha)$ is
a net in~$\lsym(V)$ that converges to~$A$ and if~$F \in L^2(V^{r+2})$, then
\[
  \langle \Tcal_r A_\alpha, F\rangle = \langle A_\alpha, \Tcal_r^* F\rangle
  \to \langle A, \Tcal_r^* F\rangle = \langle \Tcal_r A, F\rangle,
\]
and we see that~$\Tcal_r A_\alpha \to \Tcal_r A$.  In the same way we show
that~$\Tcal_r^*$ is continuous.  It follows that~$\Tcal_r^*$ is the adjoint
of~$\Tcal_r$.

The following theorem was shown by Kuryatnikova and Vera~\cite{Kuryatnikova2019}
for the continuous counterpart of~$\kpcone_r(V)$.

\begin{theorem}%
  \label{thm:kpcone-hierarchy}
  Let~$V$ be a compact Hausdorff space and let~$\omega$ be a Radon measure
  on~$V$.  The cone~$\kpcone_r(V)$ is closed for every~$r \geq 1$, both in
  the~$L^2$-norm topology and the weak topology, and
  \[
    \kpcone_1(V) \subseteq \kpcone_2(V) \subseteq \cdots \subseteq \cop(V).
  \]
\end{theorem}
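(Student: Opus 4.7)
The plan is to prove the three assertions --- closedness, the chain of inclusions, and the inclusion in~$\cop(V)$ --- in sequence.

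\emph{Closedness.} I would write $\kpcone_r(V) = \Tcal_r^{-1}(L^2_+(V^{r+2}))$, where $L^2_+$ denotes the cone of a.e.\ nonnegative elements. This cone is closed in the $L^2$-norm topology (a norm-convergent sequence has an a.e.\ convergent subsequence), and being convex it is also closed in the weak topology by the equivalence of dual-topology closures recalled in~\S\ref{sec:func-analysis}. Since $\Tcal_r$ is continuous in both topologies (as noted just before the theorem), the preimage is closed in both.

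\emph{The chain of inclusions.} Because the factors $\one$ contribute no information, $\Tcal_r A$ is a scalar multiple of $\sum_{1 \leq i<j\leq r+2} A(x_i, x_j)$, namely $\frac{2}{(r+2)(r+1)}\sum_{i<j} A(x_i, x_j)$. A short double-count (each unordered pair $\{i,j\} \subseteq \{1,\ldots,r+3\}$ is contained in exactly $r+1$ of the $r+3$ subsets obtained by omitting one index) then yields the identity
\[
\Tcal_{r+1} A(x_1, \ldots, x_{r+3}) = \frac{1}{r+3} \sum_{k=1}^{r+3} \Tcal_r A(x_1, \ldots, \hat x_k, \ldots, x_{r+3}),
\]
where the hat denotes omission. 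If $A \in \kpcone_r(V)$, then Fubini on the extra variable shows that each summand on the right is nonnegative almost everywhere on $V^{r+3}$, so $A \in \kpcone_{r+1}(V)$.

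\emph{Inclusion in~$\cop(V)$.} Let $A \in \kpcone_r(V)$ and $f \in L^2(V)$ with $f \geq 0$. First assume $f$ is bounded, so that $f^{\otimes(r+2)} \in L^2(V^{r+2})$ (using that $\omega$ is finite). A change of variables $y_i = x_{\pi(i)}$ inside each term of the Reynolds average --- which leaves $f^{\otimes(r+2)}$ invariant because $f$ appears in every slot --- collapses the $(r+2)!$ terms to a single common value, giving
\[
\langle \Tcal_r A, f^{\otimes(r+2)}\rangle = \|f\|_1^r\,\langle Af, f\rangle.
\]
The left-hand side is nonnegative since $\Tcal_r A \geq 0$ and $f^{\otimes(r+2)} \geq 0$, so $\langle Af, f\rangle \geq 0$ (if $\|f\|_1 = 0$ the conclusion is trivial). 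For general $f \in L^2(V)$ with $f \geq 0$, the truncations $f_n := \min(f, n)$ are bounded and converge to $f$ in $L^2$ by dominated convergence; since $A$ is Hilbert--Schmidt, the form $g \mapsto \langle Ag, g\rangle$ is continuous on $L^2(V)$ and the nonnegativity transfers to the limit, giving $A \in \cop(V)$. The main obstacle I anticipate is the identity $\langle \Tcal_r A, f^{\otimes(r+2)}\rangle = \|f\|_1^r \langle Af, f\rangle$, which requires careful bookkeeping of how each permutation acts on the separated $A$ and $f$ factors; the closedness and chain steps are essentially routine by comparison.
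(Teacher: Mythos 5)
Your proposal is correct, and it splits naturally into a part that matches the paper and a part that takes a genuinely different route. The closedness argument and the inclusion $\kpcone_r(V)\subseteq\cop(V)$ are essentially the paper's: closedness is the preimage of the nonnegative cone of $L^2(V^{r+2})$ under the (weakly and norm) continuous map $\Tcal_r$, and copositivity follows by pairing $\Tcal_r A$ with $f^{\otimes(r+2)}$, which collapses by the $\Scal_{r+2}$-invariance of $f^{\otimes(r+2)}$ to $\langle Af,f\rangle\,\langle\one,f\rangle^{r}$; your detour through bounded $f$ and truncation is harmless but unnecessary, since $\|f^{\otimes(r+2)}\|_2=\|f\|_2^{r+2}$ already places $f^{\otimes(r+2)}$ in $L^2(V^{r+2})$ for every nonnegative $f\in L^2(V)$. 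Where you diverge is the chain of inclusions: the paper certifies nonnegativity of $\rey{\Scal_{r+3}}(A\otimes\one^{\otimes(r+1)})$ by testing against products of nonnegative functions and invoking Lemma~\ref{lem:nonneg-tensor}, whose proof relies on outer and inner regularity of the Radon measure; you instead note that $\Tcal_r A$ is the average $\frac{2}{(r+2)(r+1)}\sum_{i<j}A(x_i,x_j)$ and derive the pointwise identity $\Tcal_{r+1}A(x_1,\ldots,x_{r+3})=\frac{1}{r+3}\sum_{k}\Tcal_r A(x_1,\ldots,\hat x_k,\ldots,x_{r+3})$ (your double count of the constants checks out), so a.e.\ nonnegativity is inherited by Fubini, the point being only that an $\omega^{r+2}$-null set times $V$ is $\omega^{r+3}$-null. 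This is more elementary and slightly more general: it needs no regularity beyond Fubini for the finite product measure, it makes Lemma~\ref{lem:nonneg-tensor} superfluous for this step, and since the analogous identity $\rey{\Scal_{m+1}}(H\otimes\one)(x_1,\ldots,x_{m+1})=\frac{1}{m+1}\sum_k(\rey{\Scal_m}H)(x_1,\ldots,\hat x_k,\ldots,x_{m+1})$ holds for arbitrary $H\in L^2(V^m)$, the same trick also yields the nesting $\vzcone_r(V)\subseteq\vzcone_{r+1}(V)$ used later in the paper. What the paper's route buys is a single reusable tool (testing nonnegativity against product functions) that it appeals to in the same breath as other symmetrization computations, at the cost of the measure-theoretic lemma.
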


The proof requires the following lemma.

\begin{lemma}%
  \label{lem:nonneg-tensor}
  Let~$V$ be a Hausdorff space and~$\omega$ be a Radon measure on~$V$.  A
  function~$F \in L^2(V^k)$ is nonnegative if and only
  if~$\langle F, g_1 \otimes \cdots \otimes g_k\rangle \geq 0$ for all
  nonnegative~$g_1$, \dots,~$g_k \in L^2(V)$.
\end{lemma}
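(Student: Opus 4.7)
The ``if'' direction is immediate: when $F \geq 0$ almost everywhere and each $g_i \geq 0$, the integrand $F \cdot (g_1 \otimes \cdots \otimes g_k)$ is nonnegative almost everywhere on $V^k$, so its integral is nonnegative.

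For the converse, my plan is to specialize the hypothesis to tensor products of characteristic functions, and then bootstrap from rectangles to arbitrary measurable sets via measure-theoretic approximation. Taking $g_i = \chi_{A_i}$ for measurable $A_i \subseteq V$ with $\omega(A_i) < \infty$ (so that $\chi_{A_i} \in L^2(V)$), the hypothesis gives $\int_{A_1 \times \cdots \times A_k} F\, d\omega^k = \langle F, \chi_{A_1} \otimes \cdots \otimes \chi_{A_k}\rangle \geq 0$, and by linearity $\int_R F\, d\omega^k \geq 0$ for every finite disjoint union $R$ of such finite-measure rectangles.

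The main step is to extend this from rectangles to arbitrary measurable sets. Since $F \in L^2(V^k)$, Chebyshev's inequality gives $\omega^k(\{|F| > \delta\}) \leq \delta^{-2}\|F\|_2^2 < \infty$ for every $\delta > 0$, so the essential support of $F$ is $\sigma$-finite and I may restrict attention there. Within a $\sigma$-finite region, the standard fact that the product $\sigma$-algebra is generated by the algebra of finite disjoint unions of finite-measure rectangles produces, for each finite-measure measurable $E \subseteq V^k$ and each $\epsilon > 0$, a union $R$ as above with $\omega^k(E \triangle R) < \epsilon$. Cauchy--Schwarz then bounds $|\int_E F\, d\omega^k - \int_R F\, d\omega^k| \leq \|F\|_2 \sqrt{\omega^k(E \triangle R)}$, and letting $\epsilon \to 0$ yields $\int_E F\, d\omega^k \geq 0$. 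I expect the only delicate point to be the clean application of the generation theorem in the absence of global $\sigma$-finiteness of $\omega$; in the paper's intended setting $V$ is compact Hausdorff with a finite Radon measure, so this reduces to a textbook approximation argument.

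To finish, I would apply this to the set $E_\delta = \{F < -\delta\}$, which has finite measure by the Chebyshev bound. The inequality $\int_{E_\delta} F\, d\omega^k \geq 0$ combined with the pointwise bound $F < -\delta$ on $E_\delta$ forces $\omega^k(E_\delta) = 0$ for every $\delta > 0$, and a countable union over $\delta = 1/n$ yields $\omega^k(\{F < 0\}) = 0$, that is, $F \geq 0$ almost everywhere.
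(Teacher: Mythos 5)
Your proof is correct in substance and takes a genuinely different route from the paper's. You argue purely measure-theoretically: specialize the hypothesis to indicators of finite-measure sets, pass by linearity to finite disjoint unions of finite-measure rectangles, approximate an arbitrary finite-measure measurable set $E \subseteq V^k$ by such unions in the sense of $\omega^k(E \triangle R)$ being small, transfer the sign of the integral by Cauchy--Schwarz, and finish with the Chebyshev sets $E_\delta = \{F < -\delta\}$. The paper instead argues topologically: if $\langle F, \chi_X\rangle < 0$ for some finite-measure set $X$, outer regularity replaces $X$ by an open set $A$, inner regularity gives a compact $C \subseteq A$ of almost full measure, compactness yields a finite cover of $C$ by open boxes $S_1 \times \cdots \times S_k \subseteq A$, and a disjointification of this cover isolates a single box $T_1 \times \cdots \times T_k$ with $\langle F, \chi_{T_1} \otimes \cdots \otimes \chi_{T_k}\rangle < 0$. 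Your route uses no topology at all and, read at face value, proves the statement for a general product measure space; the paper's route is tailored to regularity of Radon measures.

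One caveat, and it is not the one you flagged. Global $\sigma$-finiteness is not the delicate point: the Carath\'eodory approximation of a finite-measure measurable set by finite unions of rectangles from the generating algebra needs no $\sigma$-finiteness, and your Chebyshev reduction covers the rest. The real issue is which $\sigma$-algebra lives on $V^k$. Your ``textbook approximation'' is a statement about the product $\sigma$-algebra $\mathcal{B}(V)^{\otimes k}$ (or its completion under the product measure), whereas the paper treats the measure on $V^k$ as a Radon measure on the Borel $\sigma$-algebra of the product topology --- its proof explicitly invokes outer regularity and inner regularity of that measure. When $V$ is not second countable, the Borel $\sigma$-algebra of $V^k$ can be strictly larger than the product $\sigma$-algebra, so a finite-measure measurable set (your $E_\delta$, say, if $F$ is merely Borel measurable on $V^k$) need not be reachable by the abstract generation theorem; the regularity-plus-compactness argument in the paper is precisely the substitute for that step. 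Under the product-$\sigma$-algebra reading of $L^2(V^k)$, and in particular for all the concrete metrizable spaces the paper works with (spheres and the other two-point homogeneous spaces), your proof is complete as written.
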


\begin{proof}
Necessity is easy, so let us prove sufficiency.

Since~$F$ is~$L^2$, if it is not nonnegative, then there is a set~$X \subseteq
V^k$ of finite positive measure such that~$\langle F, \chi_X\rangle < 0$.
Since~$\omega$ is outer regular, we can approximate~$X$ in measure arbitrarily
well by open sets containing~$X$, and so by using the Cauchy-Schwarz inequality
we see that there is an open set~$A$ of finite measure such that~$\langle F,
\chi_A\rangle < 0$.


Now~$\omega$ is also inner regular on open sets.  So fix~$\epsilon > 0$ and
let~$C \subseteq A$ be a compact set such
that~$\omega(A \setminus C) < \epsilon$.  Sets of the
form~$S_1 \times \cdots \times S_k$, where~$S_1$, \dots,~$S_k \subseteq V$ are
open, are a base of the product topology on~$V^k$.  Together with the
compactness of~$C$, this means that there is a finite cover~$\Ccal$ of~$C$ by
sets of the form~$S_1 \times \cdots \times S_k \subseteq A$, where~$S_1$,
\dots,~$S_k \subseteq V$ are open.

Let~$\Bcal = \{\, S_i : \text{$S_1 \times \cdots \times S_k \in \Ccal$
  and~$i = 1$, \dots,~$k$}\,\}$.  For~$\Scal \subseteq \Bcal$, write
\begin{equation}%
  \label{eq:part-def}
  E_\Scal = \bigcap_{S \in \Scal} S \cap \bigcap_{S \in
    \Bcal\setminus\Scal} V\setminus S
\end{equation}
and~$\Ecal = \{\, E_\Scal : \text{$\Scal \subseteq \Bcal$
  and~$E_\Scal \neq \emptyset$}\,\}$.  Note that the sets in~$\Ecal$ are
pairwise disjoint and that sets of the form~$T_1 \times \cdots \times T_k$
with~$T_i \in \Ecal$ cover~$C$.

Now let~$\Fcal = \{\, T_1 \times \cdots \times T_k : \text{$T_i \in \Ecal$
  and~$T_1 \times \cdots \times T_k \cap C \neq \emptyset$}\,\}$, so~$\Fcal$ is
a cover of~$C$.  We claim that every set in~$\Fcal$ is contained in~$A$.
Indeed, if~$T_1 \times \cdots \times T_k \in \Fcal$, then there
is~$S_1 \times \cdots \times S_k \in \Ccal$ such that
$T_1 \times \cdots \times T_k \cap S_1 \times \cdots \times S_k \neq \emptyset$.
This implies that~$T_i \cap S_i \neq \emptyset$ for all~$i$.  But since~$T_i$ is
of the form~\eqref{eq:part-def}, if~$T_i \cap S_i \neq \emptyset$,
then~$T_i \subseteq S_i$, and so
$T_1 \times \cdots \times T_k \subseteq S_1 \times \cdots \times S_k \subseteq
A$.

Setting~$C' = \bigcup \Fcal$ we get
\[
  \begin{split}
    \langle F, \chi_{C'}\rangle &= \langle F, \chi_C\rangle + \langle F,
    \chi_{C'\setminus C}\rangle\\
    &\leq \langle F, \chi_C\rangle + \|F\|_2\|\chi_{C'\setminus C}\|_2\\
    &\leq \langle F, \chi_C\rangle + \|F\|_2\|\chi_{A\setminus C}\|_2\\
    &< \langle F, \chi_C\rangle + \|F\| \epsilon^{1/2}.
  \end{split}
\]
By taking~$\epsilon \to 0$ we
get~$\langle F, \chi_C\rangle \to \langle F, \chi_A\rangle < 0$, and so for
small enough~$\epsilon$ we have~$\langle F, \chi_{C'}\rangle < 0$.  Since the
sets in~$\Fcal$ are disjoint, there must then
be~$T_1 \times \cdots \times T_k \in \Fcal$ such
that~$\langle F, \chi_{T_1 \times \cdots \times T_k}\rangle < 0$.  But then
setting~$g_i = \chi_{T_i}$ we see that~$g_i \geq 0$ and
that~$\langle F, g_1 \otimes \cdots \otimes g_k\rangle < 0$.
\end{proof}

\begin{proof}[Proof of Theorem~\ref{thm:kpcone-hierarchy}]
To see that~$\kpcone_r(V)$ is weakly closed, note that $\kpcone_r(V) = \{\, A
\in \lsym(V) : \Tcal_r A \geq 0\,\}$.  If~$(A_\alpha)$ is a net
in~$\kpcone_r(V)$ converging to~$A$, then since~$\Tcal_r$ is weakly continuous
we know that~$\Tcal_r A_\alpha \to \Tcal_r A$.  Finally, since the set of
nonnegative functions in~$L^2(V^{r+2})$ is weakly closed, we see that~$\Tcal_r A
\geq 0$, and so~$A \in \kpcone_r(V)$.  Since~$\kpcone_r(V)$ is convex and since
the~$L^2$-norm topology and the weak topology have the same continuous
functionals, it follows that~$\kpcone_r(V)$ is closed in the norm topology as
well (see~\S\ref{sec:func-analysis}).

We now show the chain of inclusions.  Given a kernel~$A \in \kpcone_r(V)$ for
some~$r$, we want to show that~$A \in \kpcone_{r+1}(V)$, that is, we want to
show $\rey{\Scal_{r+3}}(A \otimes \one^{\otimes (r+1)}) \geq 0$, and for
that we use Lemma~\ref{lem:nonneg-tensor}.  So let~$f_1$,
\dots,~$f_{r+3} \in L^2(V)$ be any nonnegative functions.  For~$i = 1$,
\dots,~$r+3$, let~$g_i$ be the tensor product of the functions~$f_1$,
\dots,~$f_{r+3}$, except for~$f_i$, in some arbitrary order.  Then
\[
  \begin{split}
    &\langle \rey{\Scal_{r+3}}(A \otimes \one^{\otimes(r+1)}), f_1 \otimes
    \cdots \otimes f_{r+3}\rangle\\
    &\qquad=\langle A \otimes 1^{\otimes(r+1)},
    \rey{\Scal_{r+3}}(f_1 \otimes \cdots \otimes f_{r+3})\rangle\\
    &\qquad=\frac{1}{(r+3)!} \sum_{\pi \in \Scal_{r+3}} \langle A \otimes
    \one^{\otimes(r+1)}, f_{\pi(1)} \otimes \cdots \otimes
    f_{\pi(r+3)}\rangle\\
    &\qquad=\frac{1}{r+3} \sum_{i=1}^{r+3} \langle A \otimes \one^{\otimes
      r} \otimes \one, \rey{\Scal_{r+2}} g_i \otimes f_i\rangle\\
    &\qquad=\frac{1}{r+3} \sum_{i=1}^{r+3} \langle \rey{\Scal_{r+2}}(A
    \otimes \one^{\otimes r}), g_i\rangle \langle \one, f_i\rangle\\
    &\qquad\geq 0,
  \end{split}
\]
as we wanted.

To see that~$\kpcone_r(V) \subseteq \cop(V)$, take any~$A \in \kpcone_r(V)$.
For every~$f \in L^2(V)$ such that~$f \geq 0$ and~$\langle \one, f\rangle > 0$
we have
\[
  \begin{split}
    0&\leq \langle \rey{\Scal_{r+2}}(A \otimes \one^{\otimes r}),
    f^{\otimes(r+2)}\rangle\\
    &=\langle A \otimes \one^{\otimes r}, \rey{\Scal_{r+2}}(f \otimes f
    \otimes f^{\otimes r})\rangle\\
    &=\langle A \otimes \one^{\otimes r}, f \otimes f \otimes f^{\otimes
      r}\rangle\\
    &=\langle A, f\otimes f\rangle \langle \one, f\rangle^r,
  \end{split}
\]
whence~$\langle Af, f\rangle = \langle A, f \otimes f\rangle \geq 0$, and
so~$A \in \cop(V)$.
\end{proof}

The cones~$\kpcone_r(V)$ are thus a hierarchy of inner approximations
of~$\cop(V)$.  We will see in~\S\ref{sec:convergence} some sufficient conditions
for this hierarchy to converge, in the sense that the closure of the union of
the cones~$\kpcone_r(V)$ is~$\cop(V)$.

The dual cones~$\kpcone_r^*(V)$ provide a hierarchy of outer approximations
of~$\cp(V)$, namely
\[
  \kpcone_1^*(V) \supseteq \kpcone_2^*(V) \supseteq \cdots \supseteq
  \cp(V).
\]
Given a graph~$G = (V, E)$, set
\[
  \begin{optprob}
    \gamma_r(G)=\sup&\langle A, J\rangle\\
    &\int_V A(x, x)\, d\omega(x) = 1,\\
    &A(x, y) = 0\quad\text{if~$xy \in E$,}\\
    &\onerow{\text{$A \in \kpcone_r^*(V)$ is continuous and positive
        semidefinite.}}
  \end{optprob}
\]
This gives a hierarchy of bounds for the measurable independence number, namely
\[
  \gamma_1(G) \geq \gamma_2(G) \geq \cdots \geq \alpha_\omega(G).
\]
In~\S\ref{sec:convergence} we will prove that this hierarchy converges to the
measurable independence number for some classes of graphs.  For finite graphs,
this hierarchy was proposed (without the positive-semidefiniteness constraint)
by De Klerk and Pasechnik~\cite{KlerkP2002a}.


\subsection{Comparison to the Lasserre hierarchy and the $k$-point bound}

Both the Lasserre hierarchy and the $k$-point bound of~\S\ref{sec:lass-k-point}
are eventually at least as good as the completely positive hierarchy, hence if
the latter converges, so do the other two.

\begin{theorem}%
  \label{thm:lass-cop-comparison}
  If~$G = (V, E)$ is a topological graph where~$V$ is a compact Hausdorff
  space and if~$\omega$ is a Radon measure on~$V$, then~$\lass{r+2}(G) \leq
  \Delta_{r+3}(G) \leq \gamma_r(G)$ for every~$r \geq 1$.
\end{theorem}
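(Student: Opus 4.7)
The inequality~$\lass{r+2}(G) \leq \Delta_{r+3}(G)$ is exactly the case~$k = r+2$ of the restriction argument~$\lass{k}(G) \leq \Delta_{k+1}(G)$ already given in the discussion preceding the theorem, so the new work is in proving~$\Delta_{r+3}(G) \leq \gamma_r(G)$. I would do this by a forward reduction: from any~$\nu$ feasible for~$\Delta_{r+3}(G)$ with value~$c := \int_V \nu(\{x\})\,d\omega(x) > 0$ (the case~$c = 0$ is trivial), I will construct a feasible~$A$ for~$\gamma_r(G)$ with~$\langle A, J\rangle \geq c$.

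The candidate is the~$r$-point average
\[
  A(x,y) = \frac{1}{I_{r+1}} \int_{V^r} \nu(\flatten{x, y, z})\, d\omega(z),
  \qquad\text{where}\qquad
  I_k := \int_{V^k} \nu(\flatten{u})\, d\omega(u).
\]
Continuity of~$A$, vanishing on edges, and the normalization~$\int_V A(x, x)\, d\omega(x) = 1$ are routine. Positive semidefiniteness of~$A$ comes by slicing on~$z$: when~$\flatten{z}$ is independent (so~$|\flatten{z}| \leq r \leq r+1$), the kernel~$(x, y) \mapsto \nu(\flatten{x, y, z})$ is the~$V \times V$ principal block of the positive semidefinite matrix~$M_{\flatten{z}}\nu$, while for the remaining~$z$ the integrand vanishes identically.

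For the membership~$A \in \kpcone_r^*(V)$, a key preliminary is that~$\nu(S) \geq 0$ for every~$S \in \sub{V}{r+2}$: non-independent~$S$ give zero, and for independent~$S$ one picks~$x \in S$ to exhibit~$\nu(S) = M_{S\setminus\{x\}}\nu(\{x\}, \{x\})$ as a diagonal entry of a positive semidefinite matrix. Given any~$B \in \kpcone_r(V)$, both factors in the integrand
\[
  \int_{V^{r+2}} \rey{\Scal_{r+2}}(B \otimes \one^{\otimes r})(y)\, \nu(\flatten{y})\, d\omega(y)
\]
are nonnegative on~$V^{r+2}$, so the integral is~$\geq 0$. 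Expanding the Reynolds symmetrization, using the symmetry of~$\nu(\flatten{y})$ in the~$y_j$'s to collapse the~$(r+2)(r+1)$ summands, and applying Fubini identifies this integral with~$I_{r+1}\langle B, A\rangle$; hence~$\langle B, A\rangle \geq 0$.

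The main obstacle is the objective bound~$\langle A, J\rangle = I_{r+2}/I_{r+1} \geq c$. I would derive it from a log-convexity of the sequence~$(I_k)$. Testing the positive semidefiniteness of~$M_Q\nu$ against the function equal to~$\alpha$ on~$\emptyset$ and to~$\one$ on~$V$, and integrating over~$Q \in V^k$ (non-independent~$Q$ contribute zero), gives
\[
  \alpha^2 I_k + 2\alpha I_{k+1} + I_{k+2} \geq 0
\]
for every~$\alpha \in \R$ and every~$0 \leq k \leq r+1$. The discriminant inequality~$I_{k+1}^2 \leq I_k I_{k+2}$ forces~$I_k > 0$ for all~$k \leq r+2$ and makes the ratios~$r_k := I_{k+1}/I_k$ nondecreasing, so~$I_{r+2}/I_{r+1} \geq I_1/I_0 = c$. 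Taking the supremum over~$\nu$ finishes the proof.
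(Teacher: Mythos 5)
Your proposal is correct and follows essentially the same route as the paper: your kernel $A$ is exactly the paper's $\tau^{-1}\Tcal_r^*F$ with $F(x,y,v)=\nu(\flatten{v}\cup\{x,y\})$, and your quantities $I_k$ with the quadratic test $\alpha^2 I_k + 2\alpha I_{k+1} + I_{k+2} \geq 0$ reproduce the paper's moments $\Phi_t$ and the positive semidefiniteness of the $2\times 2$ matrix $\bigl(\begin{smallmatrix}\Phi_t&\Phi_{t+1}\\ \Phi_{t+1}&\Phi_{t+2}\end{smallmatrix}\bigr)$, leading to the same ratio-monotonicity conclusion $\Phi_{r+2}/\Phi_{r+1}\geq\Phi_1$.
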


\begin{proof}
Fix an integer~$r \geq 1$.  Given a feasible solution of~$\Delta_{r+3}(G)$, we
will construct a feasible solution of~$\gamma_r(G)$ with at least the same
objective value, thus proving that~$\Delta_{r+3}(G) \leq \gamma_r(G)$.
Since~$\lass{k}(G) \leq \Delta_{k+1}(G)$ for all~$k \geq 1$, as shown
in~\S\ref{sec:lass-k-point}, the other inequality in the statement will follow as
well.

Thus let~$\nu \in C(\sub{V}{r+3})$ be a feasible solution of~$\Delta_{r+3}(G)$
with positive objective value.  Consider the function~$F\colon V^{r+2} \to \R$
such that
\[
  F(x, y, v) = \nu(\flatten{v} \cup \{x, y\}) = (M_{\flatten{v}}\nu)(\{x\},
  \{y\}),
\]
where~$x$, $y \in V$ and~$v \in V^r$.  Note that~$F$ is continuous and invariant
under~$\Scal_{r+2}$, so~$\Tcal_r^* F$ is continuous and given by
\[
  \begin{split}
    (\Tcal_r^* F)(x, y) &= \int_{V^r} F(x, y, v)\, d\omega(v)\\
    &= \int_{V^r} \nu(\flatten{v} \cup \{x, y\})\, d\omega(v)\\
    &= \int_{V^r} (M_{\flatten{v}}\nu)(\{x\}, \{y\})\, d\omega(v).
  \end{split}
\]

Since~$M_{\flatten{v}}\nu$ is positive semidefinite for every~$v \in V^r$, it is
clear that~$\Tcal_r^* F$ is positive semidefinite.  We claim
that~$\Tcal_r^* F \in \kpcone_r^*(V)$.  Indeed, if~$\flatten{v} \cup \{x,y\}$ is
not independent, then~$F(x, y, v) = 0$; otherwise we
have~$F(x, y, v) = (M_{\flatten{v}\cup\{y\}}\nu)(\{x\}, \{x\}) \geq 0$
since~$M_Q\nu$ is positive semidefinite for every independent set~$Q$
with~$|Q| \leq r+1$.  So~$F \geq 0$ and for every kernel~$Z \in \kpcone_r(V)$ we
have~$\langle Z, \Tcal_r^* F\rangle = \langle \Tcal_r Z, F\rangle \geq 0$,
whence~$\Tcal_r^*F \in \kpcone_r^*(V)$.

We clearly have~$(\Tcal_r^*F)(x, y) = 0$ if~$xy \in E$.
Let~$\tau = \int_V (\Tcal_r^*F)(x, x)\, d\omega(x)$; we will see soon
that~$\tau > 0$.  Then~$A = \tau^{-1} \Tcal_r^* F$ will be a feasible solution
of~$\gamma_r(G)$; our goal is then to estimate its objective value.

To simplify notation, we define~$V^0 = \{\emptyset\}$ and
set~$\flatten{\emptyset} = \emptyset$; we also denote by~$\omega$ the counting
measure on~$V^0$.  For an integer~$0\leq t \leq r+3$ write
\[
  \Phi_t = \int_{V^t} \nu(\flatten{v})\, d\omega(v).
\]
We claim that the matrix
\begin{equation}%
  \label{eq:cont-norm-matrix}
  \begin{pmatrix}
    \Phi_t&\Phi_{t+1}\\
    \Phi_{t+1}&\Phi_{t+2}
  \end{pmatrix}
\end{equation}
is positive semidefinite for all~$0 \leq t \leq r+1$.

Indeed, fix~$0 \leq t \leq r+1$ and let~$B\colon \sub{V}{1}^2 \to \R$ be such
that
\[
  B(S, T) = \int_{V^t} \nu(\flatten{v} \cup S \cup T)\, d\omega(v) =
  \int_{V^t} (M_{\flatten{v}}\nu)(S, T)\, d\omega(v).
\]
It is at once clear that~$B$ is a positive-semidefinite kernel and
that~$B(S, T)$ depends only on~$S \cup T$.  Moreover,
\begin{flalign*}
  B(\emptyset, \emptyset)&=\int_{V^t} \nu(\flatten{v})\, d\omega(v) =
  \Phi_t,\\
  \int_V B(\emptyset, \{x\})\, d\omega(x) &= \int_V \int_{V^t}
  \nu(\flatten{v} \cup \{x\})\, d\omega(v)d\omega(x) = \Phi_{t+1},\text{
    and}\\
  \int_V\int_V B(\{x\}, \{y\})\, d\omega(y)d\omega(x) &= \int_V \int_V
  \int_{V^t} \nu(\flatten{v} \cup \{x,y\})\, d\omega(v) d\omega(y)
  d\omega(x)\\
  &=\Phi_{t+2},
\end{flalign*}
and so the matrix in~\eqref{eq:cont-norm-matrix} is positive semidefinite.  It
follows that, since~$\Phi_0 = 1$ and since~$\nu$ has objective
value~$\Phi_1 > 0$, we need to have~$\Phi_2 > 0$ as well.  Repeating the
argument, we see that~$\Phi_t > 0$ for all~$t$.

Hence for every fixed~$t$ we have~$\Phi_t \Phi_{t+2} - \Phi_{t+1}^2 \geq 0$,
whence~$\Phi_{t+2}\Phi_{t+1}^{-1} \geq \Phi_{t+1} \Phi_t^{-1}$.  Apply this inequality
repeatedly to get~$\Phi_{r+2}\Phi_{r+1}^{-1} \geq \Phi_1 \Phi_0^{-1} = \Phi_1$.
Now~$\tau = \Phi_{r+1}$, and so~$\tau > 0$.
Moreover,~$\Phi_{r+2} = \langle \Tcal_r^*F, J\rangle$,
hence~$\langle A, J\rangle \geq \Phi_1 = \int_V \nu(\{x\})\, d\omega(x)$, as we
wanted.
\end{proof}


\section{Convergence}%
\label{sec:convergence}

The Lasserre hierarchy converges for finite graphs, that is, if~$G$ is any
finite graph then~$\lass{k}(G) = \alpha(G)$ for all~$k \geq \alpha(G)$.  De
Klerk and Pasechnik~\cite{KlerkP2002a} showed that the completely positive
hierarchy~$\gamma_r$ also converges for finite graphs,
namely~$\gamma_r(G) \to \alpha(G)$ as~$r \to \infty$.
Since~$\lass{k}(G) = \lass{k'}(G)$ for all~$k$, $k' \geq \alpha(G)$, the
convergence of the Lasserre hierarchy then follows from
Theorem~\ref{thm:lass-cop-comparison}.  Actually, we also get in this way
convergence for the $k$-point bound.

De Laat and Vallentin~\cite{LaatV2015} showed that an extension of the Lasserre
hierarchy converges also for topological packing graphs.  Kuryatnikova and
Vera~\cite{Kuryatnikova2019} extended the completely positive
hierarchy~$\gamma_r$ from finite graphs to topological packing graphs and showed
that it converges.

In this section we will discuss sufficient conditions under which the completely
positive hierarchy~$\gamma_r$ converges for locally independent graphs and,
using Theorem~\ref{thm:lass-cop-comparison}, we will obtain convergence results
for the Lasserre hierarchy and the $k$-point bound as consequences.  The first
step is to better understand the copositive cone~$\cop(V)$ and its inner
approximations~$\kpcone_r(V)$.


\subsection{The copositive cone and its inner approximations}%
\label{sec:cop-inner-approx}

Let~$X$ be a real vector space and let~$S \subseteq X$.  We say that~$x \in S$
is in the \defi{algebraic interior} of~$S$ if~$0$ is in the interior of~$\{\,
\lambda \in \R : x + \lambda y \in S\,\}$ for every~$y \in X$.  If~$X$ is a
topological vector space, then the interior of~$S$ is a subset of its algebraic
interior.

Let~$V$ be a compact Hausdorff space equipped with a Radon measure~$\omega$.
Write
\[
  \copc(V) = \{\, A \in \cop(V) : \text{$A$ is continuous}\,\}.
\]

In~\S\ref{sec:func-analysis} we mentioned the observation of
Bochner~\cite{Bochner1941} that if~$\omega$ is positive on open sets, then a
continuous kernel is positive semidefinite if and only if each of its finite
principal submatrices is positive semidefinite.  The same holds for copositive
kernels~\cite[Theorem~4.7]{DeCorteOV2022}:

\begin{theorem}%
  \label{thm:cop-bochner}
  Let~$V$ be a compact Hausdorff space equipped with a Radon measure~$\omega$
  that is positive on open sets.  A kernel~$A \in \csym(V)$ is copositive if and
  only if for every finite set~$U \subseteq V$ the
  matrix~$\bigl(A(x, y)\bigr)_{x,y \in U}$ is copositive.
\end{theorem}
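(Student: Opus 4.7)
The plan is to prove each implication separately, using the continuity of $A$ together with the compactness of $V$ to pass between finite pointwise conditions and integral conditions against $L^2$ functions. For the ``only if'' direction I will approximate point masses by $L^2$ functions supported on shrinking neighborhoods; for the ``if'' direction I will approximate an arbitrary nonnegative $f \in L^2(V)$ by a block-constant object whose quadratic form is a finite copositive sum.

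For the forward direction, take $A \in \cop(V)$ and fix a finite set $U = \{x_1, \ldots, x_n\} \subseteq V$ together with a nonnegative $c \in \R^n$. Using that $V$ is Hausdorff, I choose pairwise disjoint open neighborhoods $U_i^\alpha$ of $x_i$ indexed by a directed system of bases shrinking to $\{x_i\}$; each $U_i^\alpha$ has positive measure since $\omega$ is positive on open sets. The nonnegative functions $f_\alpha = \sum_i (c_i/\omega(U_i^\alpha)) \chi_{U_i^\alpha}$ lie in $L^2(V)$, so $\langle A f_\alpha, f_\alpha\rangle \geq 0$; expanding the inner product and using the continuity of $A$ to pass to the limit yields
\[
0 \leq \lim_\alpha \langle A f_\alpha, f_\alpha\rangle = \sum_{i,j} c_i c_j A(x_i, x_j),
\]
which is the desired copositivity of the principal submatrix.

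For the reverse direction, assume every finite principal submatrix of $A$ is copositive, take a nonnegative $f \in L^2(V)$ and fix $\epsilon > 0$. By continuity of $A$ on the compact Hausdorff product $V \times V$, for each $(a, b) \in V^2$ there is a product neighborhood $U_{a,b} \times W_{a,b}$ on which $A$ varies by less than $\epsilon/2$; by compactness a finite subfamily $\{U_k \times W_k\}_{k=1}^K$ covers $V^2$. Applying the Boolean-refinement construction used in the proof of Lemma~\ref{lem:nonneg-tensor} to the family $\{U_k\} \cup \{W_k\}$ produces pairwise disjoint measurable pieces $V_1, \ldots, V_m$ covering $V$ such that each $V_i$ sits inside every member of $\{U_k\} \cup \{W_k\}$ that it meets; choosing $x_i \in V_i$, one gets $|A(x, y) - A(x_i, x_j)| < \epsilon$ throughout $V_i \times V_j$. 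Setting $c_i = \int_{V_i} f\, d\omega \geq 0$, the quadratic form decomposes as
\[
\langle A f, f\rangle = \sum_{i,j} c_i c_j A(x_i, x_j) + R,
\]
with $|R| \leq \epsilon \|f\|_1^2 \leq \epsilon\, \omega(V)\, \|f\|_2^2$ by Cauchy--Schwarz and the finiteness of $\omega(V)$. The first term is nonnegative by the copositivity of $(A(x_i, x_j))_{i,j}$, so letting $\epsilon \to 0$ yields $\langle Af, f\rangle \geq 0$, i.e.\ $A \in \cop(V)$. The main obstacle is producing the measurable partition with the desired uniform approximation property in a compact Hausdorff (not necessarily metric) setting, which is handled by the same Boolean-refinement trick already invoked in Lemma~\ref{lem:nonneg-tensor}.
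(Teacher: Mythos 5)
Your proof is correct. The paper itself does not prove this statement but quotes it from DeCorte, Oliveira, and Vallentin (their Theorem~4.7), and your argument is the standard one consistent with the techniques used there and elsewhere in this paper: normalized characteristic functions of shrinking neighborhoods for the ``only if'' direction (as in the proof of Theorem~\ref{thm:polya-gen}), and the Boolean-refinement partition of Lemma~\ref{lem:nonneg-tensor} to reduce the ``if'' direction to a finite copositive quadratic form plus an $O(\epsilon)$ error controlled via $\|f\|_1 \leq \omega(V)^{1/2}\|f\|_2$.
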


As a subset of the vector space~$\csym(V)$, the cone~$\copc(V)$ has nonempty
algebraic interior: the constant-one kernel~$J$ is, for instance, in its
algebraic interior.  If we equip~$\csym(V)$ with the supremum norm, then~$J$
is also in the interior of~$\copc(V)$.  In
contrast,~$\cop(V) \subseteq \lsym(V)$ has, in general, empty algebraic
interior and hence empty interior, as we will see later.

Kuryatnikova and Vera~\cite{Kuryatnikova2019} showed the following result,
extending a theorem of De Klerk and Pasechnik~\cite{KlerkP2007} (for
completeness, a proof can be found in Appendix~\ref{apx:cop-hierarchy}):

\begin{theorem}%
  \label{thm:olga-juan}
  If~$V$ is a compact Hausdorff space equipped with a Radon measure that is
  positive on open sets, then every kernel in the algebraic interior
  of $\copc(V) \subseteq \csym(V)$ belongs to a cone~$\kpcone_r(V)$ for
  some~$r \geq 1$.
\end{theorem}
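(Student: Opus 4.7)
The plan is to reduce this infinite-dimensional statement to a classical P\'olya-style estimate via Theorem~\ref{thm:cop-bochner}. Since $A$ lies in the algebraic interior of $\copc(V) \subseteq \csym(V)$, the definition of algebraic interior applied in the direction $-J \in \csym(V)$ yields an $\epsilon > 0$ such that $A - \epsilon J \in \copc(V)$. Theorem~\ref{thm:cop-bochner} then tells us that for every finite subset $\{y_1,\ldots,y_m\} \subseteq V$ the matrix $\bigl(A(y_k,y_l) - \epsilon\bigr)_{k,l=1}^m$ is copositive, so for every $w \in \R^m$ with $w \geq 0$ and $\sum_k w_k = 1$ we obtain
\[
  \sum_{k,l} w_k w_l A(y_k, y_l) \geq \epsilon.
\]

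Next, I would rewrite the membership $A \in \kpcone_r(V)$ in a form amenable to this estimate. Because $A \otimes \one^{\otimes r}$ depends only on its first two coordinates, counting the $(r+2)!$ permutations in $\Scal_{r+2}$ by their action on those two coordinates yields
\[
  \rey{\Scal_{r+2}}(A \otimes \one^{\otimes r})(x_1,\ldots,x_{r+2}) = \frac{1}{(r+1)(r+2)} \sum_{i \neq j} A(x_i, x_j),
\]
and by continuity of $A$ the condition $A \in \kpcone_r(V)$ is equivalent to $\sum_{i \neq j} A(x_i, x_j) \geq 0$ for every tuple in $V^{r+2}$. Grouping equal coordinates, if $y_1,\ldots,y_m$ are the distinct entries of such a tuple with multiplicities $v_1,\ldots,v_m$ and we set $w_k = v_k/(r+2)$, a short calculation gives
\[
  \sum_{i \neq j} A(x_i, x_j) = (r+2)^2 \sum_{k,l} w_k w_l A(y_k, y_l) - (r+2) \sum_k w_k A(y_k, y_k).
\]

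To conclude, I would combine the two formulas. The first sum on the right is at least $(r+2)^2 \epsilon$ by the estimate above, whereas the second is at most $(r+2) \|A\|_\infty$, where $\|A\|_\infty = \sup_{V^2}|A|$ is finite since $V^2$ is compact and $A$ is continuous. Any integer $r \geq 1$ with $(r+2)\epsilon \geq \|A\|_\infty$ therefore makes the whole expression nonnegative for every choice of tuple, which is exactly $A \in \kpcone_r(V)$.

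I do not anticipate a serious obstacle. The only genuinely nontrivial input is Theorem~\ref{thm:cop-bochner}, which converts the infinite-dimensional copositivity of $A - \epsilon J$ into a uniform family of finite matrix copositivity statements; once this is available, the gap between the quadratic main term (of order $(r+2)^2$) and the diagonal remainder (of order $r+2$) closes the argument by the standard P\'olya-type reasoning.
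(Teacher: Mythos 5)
Your proof is correct, and every step checks out: the algebraic-interior hypothesis in the direction $-J$ gives $A - \epsilon J \in \copc(V)$; Theorem~\ref{thm:cop-bochner} converts this into copositivity of every finite principal submatrix, hence $\sum_{k,l} w_k w_l A(y_k,y_l) \geq \epsilon$ on the simplex; the symmetrization identity $\rey{\Scal_{r+2}}(A \otimes \one^{\otimes r}) = \frac{1}{(r+1)(r+2)}\sum_{i\neq j}A(x_i,x_j)$ and the grouping-by-multiplicities computation are both right; and pointwise nonnegativity for all tuples certainly implies the almost-everywhere nonnegativity that membership in $\kpcone_r(V)$ requires. What you do differently from the paper: the paper proves a general infinite-dimensional P\'olya theorem (Theorem~\ref{thm:polya-gen}) for symmetric $k$-tensors, obtained by applying the effective P\'olya bound of Powers and Reznick to finite restrictions $T[U]$ and transferring the lower bound $\lambda$ from test functions $f \in L^2(V)$ to weight vectors $w$ on finite subsets via an explicit approximation using sets $P(x)$ of positive measure (this is where positivity of the measure on open sets enters); Theorem~\ref{thm:olga-juan} is then a two-line corollary with $k=2$. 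You instead stay entirely in the quadratic case, offload the measure-theoretic approximation to the cited Theorem~\ref{thm:cop-bochner} (whose proof in the reference rests on the same kind of argument), and replace Powers--Reznick by the elementary permutation count, which for $k=2$ reproduces exactly the same threshold $r+2 \geq \|A\|_\infty/\epsilon$ as the paper's $r > k(k-1)M/(2\lambda)-k$. The trade-off is that your argument is shorter and self-contained for this theorem but does not yield the degree-$k$ statement of Theorem~\ref{thm:polya-gen}, which the paper records as being of independent interest.
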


Therefore the cones~$\kpcone_r(V)$ somehow capture~$\copc(V)$, since every
kernel in $\copc(V)$ can be approximated in supremum norm arbitrarily well by
kernels in the algebraic interior of~$\copc(V)$.  We need an analogous result
for~$\cp(V)$ and the outer approximations~$\kpcone_r^*(V)$.

\begin{theorem}%
  \label{thm:cp-convergence}
  Let~$\Gamma$ be a compact group and~$V$ be a homogeneous $\Gamma$-space
  equipped with the pushforward of the Haar measure on~$\Gamma$.
  If~$A \in \lsym(V)$ is such that $\langle A, Z\rangle \geq 0$ for
  all~$Z \in \bigcup_{r\geq 1} \kpcone_r(V)$, then~$A$ is completely positive.
\end{theorem}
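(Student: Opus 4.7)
The plan is to show that $\bigcup_{r \geq 1} \kpcone_r(V)$ is $L^2$-dense in $\cop(V)$. Once this is established, the hypothesis and the $L^2$-continuity of the pairing $\langle A, \cdot\rangle$ (valid since $A \in L^2(V^2)$) give $\langle A, Z\rangle \geq 0$ for every $Z \in \cop(V)$, which by definition means $A \in \cop(V)^* = \cp(V)$.

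I would show the density in two layers. For the first, Theorem~\ref{thm:olga-juan} gives that every kernel in the algebraic interior of $\copc(V) \subseteq \csym(V)$ belongs to $\bigcup_r \kpcone_r(V)$; since $J$ lies in this algebraic interior, $K + \epsilon J$ does too for every $K \in \copc(V)$ and $\epsilon > 0$, and $K + \epsilon J \to K$ in $L^2(V^2)$ as $\epsilon \to 0$ (because $V^2$ has finite measure). Thus $\copc(V)$ is contained in the $L^2$-closure of $\bigcup_r \kpcone_r(V)$. The second and main step is $L^2$-density of $\copc(V)$ in $\cop(V)$, which I plan to obtain by mollification through the group $\Gamma$: for a nonnegative $\phi \in C(\Gamma)$ with $\int_\Gamma \phi\,d\mu = 1$ and $K \in \cop(V)$, set
\[
  K_\phi(x,y) = \int_\Gamma\int_\Gamma \phi(\sigma)\phi(\tau) K(\sigma^{-1}x, \tau^{-1}y)\, d\mu(\sigma)\, d\mu(\tau),
\]
so that $K_\phi$ is the kernel of the Hilbert--Schmidt operator $T_\phi K T_\phi^*$, where $T_\phi f(x) = \int_\Gamma \phi(\sigma) f(\sigma^{-1}x)\,d\mu(\sigma)$. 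Copositivity of $K_\phi$ when $K$ is copositive is immediate from a change of variables using the $\Gamma$-invariance of $\omega$, which yields $\langle K_\phi f, f\rangle = \langle K(T_\phi^*f), T_\phi^*f\rangle \geq 0$ for $f \geq 0$ (since $T_\phi^*f \geq 0$). Convergence $K_\phi \to K$ in $L^2$ as $\phi$ runs over a continuous approximate identity on $\Gamma$ follows from the strong convergence $T_\phi, T_\phi^* \to I$ combined with the fact that $T_\phi$ is an $L^2$-contraction and $K$ is Hilbert--Schmidt.

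The main obstacle will be showing that $K_\phi$ is continuous. My plan is to establish the sup-norm bound $\|K_\phi\|_\infty \leq \|\phi\|_2^2 \|K\|_2$ by Cauchy--Schwarz and the fact that $\sigma \mapsto \sigma x$ is measure-preserving from $\mu$ to $\omega$, and then check continuity on the dense subset of pure tensors $K = f \otimes g$, where $K_\phi = (T_\phi f) \otimes (T_\phi g)$. Continuity of $T_\phi f$ for $f \in L^2(V)$ reduces, via the lift $\tilde f(\sigma) = f(\sigma e)$ of Section~\ref{sec:topo-groups}, to the classical continuity of the convolution $\phi * \tilde f$ of two $L^2$-functions on the compact group $\Gamma$ (which in turn comes from strong continuity of the regular representation). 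Approximating a general $K \in L^2(V^2)$ by finite tensor sums and passing to the sup-norm limit then yields continuity of $K_\phi$, completing the density argument and hence the theorem.
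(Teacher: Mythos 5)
Your proposal is correct and follows essentially the same route as the paper: the paper also combines Theorem~\ref{thm:olga-juan} with the perturbation by~$J$ into the algebraic interior of~$\copc(V)$, and its Lemma~\ref{lem:copc-dense} establishes exactly your main step, the $L^2$-density of~$\copc(V)$ in~$\cop(V)$, by mollifying with an approximate identity on~$\Gamma$ (there realized as normalized characteristic functions~$\mu(U)^{-2}\chi_U\otimes\chi_U$ acting on the kernel lifted to~$\Gamma^2$, with continuity and $L^2$-convergence cited from Folland). Your operator-theoretic packaging~$K_\phi = T_\phi K T_\phi^*$ with a continuous mollifier, the sup-norm bound plus tensor approximation for continuity, and the strong-convergence/Hilbert--Schmidt argument for $L^2$-convergence are sound variants of the same idea.
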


Why is the theorem stated only for kernels on homogeneous spaces?  In the proof
we use that~$\copc(V)$ is dense in~$\cop(V)$, and though this may well be true
in general, the proof provided in Lemma~\ref{lem:copc-dense} below uses the
group action to approximate a copositive kernel by a continuous copositive
kernel.  The analogous result for positive-semidefinite (instead of copositive)
kernels, namely that a positive-semidefinite kernel in~$\lsym(V)$ can be
approximated arbitrarily well by continuous positive-semidefinite kernels,
follows from the spectral theorem; it is the lack of such a characterization of
copositive kernels that explains the less general result above.

\begin{lemma}%
  \label{lem:copc-dense}
  If~$\Gamma$ is a compact group and if~$V$ is a homogeneous $\Gamma$-space
  equipped with the pushforward~$\omega$ of the Haar measure~$\mu$ on~$\Gamma$,
  then~$\copc(V)$ is dense in~$\cop(V)$ in the~$L^2$-norm topology.
\end{lemma}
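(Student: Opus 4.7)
The plan is to convolve $A$ with a continuous approximate identity on $\Gamma$ from both sides, in a way that preserves symmetry and copositivity, yields a continuous kernel, and converges to $A$ in $L^2$-norm. Fix $\phi \in C(\Gamma)$ nonnegative with $\phi(\sigma^{-1}) = \phi(\sigma)$ and $\int_\Gamma \phi\, d\mu = 1$, and define $T_\phi\colon L^2(V) \to L^2(V)$ by $(T_\phi f)(x) = \int_\Gamma \phi(\sigma) f(\sigma^{-1}x)\, d\mu(\sigma)$. Short computations, using $\Gamma$-invariance of $\omega$ and inversion-invariance of the Haar measure, show that $T_\phi$ has operator norm at most one, is self-adjoint, and maps nonnegative functions to nonnegative functions.

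Setting $B = T_\phi A T_\phi$, the operator $B$ is Hilbert-Schmidt with integral kernel
\[
  B(x, y) = \int_\Gamma \int_\Gamma \phi(\sigma) \phi(\tau) A(\sigma^{-1}x, \tau^{-1}y)\, d\mu(\sigma)\, d\mu(\tau).
\]
Symmetry of $B$ is inherited from $A$ via self-adjointness of $T_\phi$, and copositivity is immediate: for nonnegative $f \in L^2(V)$ the function $T_\phi f$ is also nonnegative, so $\langle Bf, f\rangle = \langle A T_\phi f, T_\phi f\rangle \geq 0$. Letting $\phi$ range over a sequence of symmetric nonnegative continuous approximate identities at~$1 \in \Gamma$, standard arguments (uniform continuity of continuous functions under the group action plus density of $C(V)$ in $L^2(V)$) give $T_\phi \to I$ strongly on $L^2(V)$. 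Writing $B - A = T_\phi A (T_\phi - I) + (T_\phi - I) A$, Hilbert-Schmidt convergence $B \to A$ in $L^2(V^2)$ then follows from $A$ being Hilbert-Schmidt, the uniform operator-norm bound $\|T_\phi\| \leq 1$, and dominated convergence applied to the series defining the Hilbert-Schmidt norm.

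The main obstacle is showing that $B$ admits a continuous representative, since $A$ is only defined almost everywhere and the displayed integral for $B(x, y)$ a priori makes sense only for almost every $(x, y)$. I would handle this by lifting $A$ to $\tilde A \in L^2(\Gamma^2)$ by $\tilde A(\sigma, \tau) = A(\sigma e, \tau e)$ for some fixed $e \in V$; invariance of the measures gives $\|\tilde A\|_2 = \|A\|_2$, and $\tilde A$ is right-invariant in each coordinate under the stabilizer of $e$ in $\Gamma$. An appropriate change of variables rewrites $B(\sigma_0 e, \tau_0 e)$ as
\[
  \tilde B(\sigma_0, \tau_0) = \int_\Gamma \int_\Gamma \phi(\sigma_0 \sigma^{-1}) \phi(\tau_0 \tau^{-1}) \tilde A(\sigma, \tau)\, d\mu(\sigma)\, d\mu(\tau),
\]
and Cauchy-Schwarz together with the uniform continuity of $\phi$ on the compact group $\Gamma$ shows that $\tilde B$ is continuous on $\Gamma^2$. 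A right-translation substitution in each variable, using the stabilizer-invariance of $\tilde A$, shows that $\tilde B$ enjoys the same invariance, so by the mechanism recorded in~\S\ref{sec:topo-groups} it descends to a continuous function on $V^2$ that agrees with $B$ almost everywhere, yielding the desired continuous copositive approximation of $A$.
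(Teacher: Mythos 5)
Your proposal is correct and follows essentially the same route as the paper: smooth $A$ by convolving against an approximate identity on~$\Gamma$ (the paper uses $\psi_U = \mu(U)^{-2}\chi_U\otimes\chi_U$, you use a continuous symmetric~$\phi$ applied as the sandwich $T_\phi A T_\phi$), and obtain continuity by lifting to~$\Gamma^2$ and descending through the stabilizer-invariance mechanism of~\S\ref{sec:topo-groups}. The only cosmetic differences are that you get copositivity directly on~$V$ from self-adjointness and positivity of~$T_\phi$, whereas the paper verifies it on~$\Gamma$ and transfers it using Theorem~\ref{thm:cop-bochner}, and that you prove the $L^2$-convergence via a Hilbert--Schmidt estimate instead of citing the standard $L^p$ approximate-identity property.
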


\begin{proof}
We first prove the statement for~$V = \Gamma$.  Let~$\Bcal$ be a neighborhood
base of~$1 \in \Gamma$ consisting of compact symmetric
sets~\cite[Proposition~2.1]{Folland1995}.
Then~$\psi_U = \mu(U)^{-2} \chi_U \otimes \chi_U$ for~$U \in \Bcal$ is an
approximate identity for the direct product~$\Gamma \times \Gamma$ (see
Folland~\cite[\S2.5]{Folland1995}).  Note
that~$\psi_U(\sigma^{-1}, \tau^{-1}) = \psi_U(\sigma, \tau)$ for all~$\sigma$,
$\tau \in \Gamma$, since the sets in~$\Bcal$ are symmetric.

Take~$A \in \cop(\Gamma)$.  For every~$U \in \Bcal$, the convolution
\[
  \begin{split}
    (\psi_U * A)(\sigma, \tau) &= \int_{\Gamma \times \Gamma}
    \psi_U(\alpha, \beta) A(\alpha^{-1}\sigma, \beta^{-1}\tau)\,
    d\mu(\alpha, \beta)\\
    &=\int_{\Gamma \times \Gamma} \psi_U(\sigma\alpha, \tau\beta)
    A(\alpha^{-1}, \beta^{-1})\, d\mu(\alpha, \beta)\\
    &=\int_{\Gamma \times \Gamma} \psi_U(\alpha^{-1}\sigma^{-1},
    \beta^{-1}\tau^{-1}) A(\alpha^{-1}, \beta^{-1})\, d\mu(\alpha, \beta)\\
    &=\int_{\Gamma \times \Gamma} \psi_U(\alpha\sigma^{-1}, \beta\tau^{-1})
    A(\alpha, \beta)\, d\mu(\alpha, \beta)
  \end{split}
\]
is continuous~\cite[Proposition~2.39(d)]{Folland1995}.
Moreover,~$\|\psi_U * A - A\|_2\to 0$ as~$U \to \{1\}$, implying that for
every~$\epsilon > 0$ there is~$U \in \Bcal$ such that
$\|\psi_U * A - A\|_2 < \epsilon$ (see
Folland~\cite[Proposition~2.42]{Folland1995}).

Finally,~$\psi_U * A$ is copositive for all~$U \in \Bcal$.  Indeed,
given~$f \in L^2(\Gamma)$ with~$f \geq 0$, let~$g \in L^2(\Gamma)$ be given by
\[
  g(\alpha) = \int_\Gamma f(\sigma) \chi_U(\alpha\sigma^{-1})\,
  d\mu(\sigma);
\]
note that~$g \geq 0$.  Since~$A$ is copositive we then have
\[
  \begin{split}
    \langle (\psi_U * A) f, f\rangle &= \mu(U)^{-2} \int_\Gamma\int_\Gamma
    \int_{\Gamma \times \Gamma} A(\alpha, \beta)
    \chi_U(\alpha\sigma^{-1})\chi_U(\beta\tau^{-1})\, d\mu(\alpha, \beta)\\
    &\phantom{= \mu(U)^{-2} \int_\Gamma\int_\Gamma \int_{\Gamma \times
        \Gamma} A(\alpha, \beta)
      \chi_U(\alpha\sigma^{-1})\chi_U(\beta\tau^{-1})}{}\cdot f(\sigma) f(\tau)\,
    d\tau d\sigma\\
    &=\mu(U)^{-2} \int_\Gamma\int_\Gamma A(\alpha, \beta) g(\alpha)
    g(\beta)\, d\mu(\beta) d\mu(\alpha)\\
    &\geq 0,
  \end{split}
\]
as we wanted.

So we see that~$\copc(\Gamma)$ is dense in~$\cop(\Gamma)$.  Now suppose~$V$ is a
homogeneous $\Gamma$-space and let~$A \in \cop(V)$.  Use the trick
from~\S\ref{sec:topo-groups}: fix~$e \in V$ and consider $F \in L^2(\Gamma^2)$
given by~$F(\sigma, \tau) = A(\sigma e, \tau e)$.  Since~$A$ is copositive, so
is~$F$; moreover, for every~$\epsilon > 0$ there is~$U \in \Bcal$
with~$\|\psi_U * F - F\|_2 < \epsilon$.

If~$\sigma_1 e = \sigma_2 e$ and~$\tau_1 e = \tau_2 e$, then
\[
  \begin{split}
    (\psi_U * F)(\sigma_1, \tau_1) &= \int_{\Gamma \times \Gamma}
    \psi_U(\alpha, \beta) A(\alpha^{-1} \sigma_1 e, \beta^{-1} \tau_1 e)\,
    d\mu(\alpha, \beta)\\
    &= \int_{\Gamma \times \Gamma} \psi_U(\alpha, \beta) A(\alpha^{-1}
    \sigma_2 e, \beta^{-1} \tau_2 e)\, d\mu(\alpha, \beta)\\
    &= (\psi_U * F)(\sigma_2, \tau_2),
  \end{split}
\]
hence there is a function~$A_U\colon V^2 \to \R$ such
that~$(\psi_U * F)(\sigma, \tau) = A_U(\sigma e, \tau e)$; since~$\psi_U * F$ is
continuous, so is~$A_U$, and since~$\psi_U * F$ is copositive,~$A_U$ is also
copositive (use Theorem~\ref{thm:cop-bochner}).
Moreover,~$\|A_U - A\|_2 = \|\psi_U * F - F\|_2 < \epsilon$, and we are done.
\end{proof}

\begin{proof}[Proof of Theorem~\ref{thm:cp-convergence}]
If~$Z \in \copc(V)$, then for every~$\lambda \in (0, 1]$ the
kernel~$\lambda J + (1 - \lambda) Z$ is in the algebraic interior of~$\copc(V)$.
The theorem then follows from Theorem~\ref{thm:olga-juan} together with
Lemma~\ref{lem:copc-dense}.
\end{proof}

Let us close this section by showing that~$\cop(V)$ may have empty algebraic
interior.  Let~$V$ be any measure space equipped with a finite measure~$\omega$
such that~$V$ can be partitioned into infinitely many sets~$P_1$, $P_2$, \dots\
of positive measure.  Take any~$A \in \cop(V)$.

For an integer~$n \geq 1$ write
\[
  f(n) = \frac{1}{\omega(P_n)^2} \int_{P_n} \int_{P_n} A(x, y)\, d\omega(y)
  d\omega(x).
\]
Note that, since~$A$ is copositive,~$f(n) \geq 0$ for all~$n$.  If for some~$n$
we have~$f(n) = 0$, then for every~$\lambda > 0$ we have
\[
  \begin{split}
    \int_{P_n} \int_{P_n} (A - \lambda J)(x, y)\, d\omega(y) d\omega(x) &=
    f(n) \omega(P_n)^2 - \lambda \omega(P_n)^2\\
    &=(f(n) - \lambda) \omega(P_n)^2\\
    &=-\lambda \omega(P_n)^2 < 0,
  \end{split}
\]
and so~$A - \lambda J$ is not copositive and hence~$A$ is not in the algebraic
interior of~$\cop(V)$.  So assume~$f(n) > 0$ for all~$n$.

Use Cauchy-Schwarz to get
\[
  \biggl(\int_{P_n}\int_{P_n} A(x, y)\, d\omega(y) d\omega(x)\biggr)^2 \leq
  \omega(P_n)^2 \int_{P_n}\int_{P_n} A(x, y)^2\, d\omega(y) d\omega(x),
\]
whence~$\sum_{n \geq 1} f(n)^2 \omega(P_n)^2 < \infty$.

Now let~$g(n)$ be such that~$g(n) \to \infty$ and~$\sum_{n \geq 1} f(n)^2
g(n)^2 \omega(P_n)^2 < \infty$.  Set
\[
  K = \sum_{n \geq 1} f(n) g(n) \chi_{P_n} \otimes \chi_{P_n}.
\]
Note that~$\|K\|_2^2 = \sum_{n \geq 1} f(n)^2 g(n)^2 \omega(P_n)^2 <
\infty$, so~$K \in \lsym(V)$.  Moreover, for every~$\lambda > 0$ we have
\[
  \begin{split}
    \int_{P_n}\int_{P_n} (A - \lambda K)(x, y)\, d\omega(y) d\omega(x) &=
    f(n) \omega(P_n)^2 - \lambda f(n) g(n) \omega(P_n)^2\\
    &=f(n) \omega(P_n)^2 (1 - \lambda g(n)),
  \end{split}
\]
and since~$g(n) \to \infty$ and~$f(n) > 0$ for all~$n$ we see that for
every~$\lambda > 0$ the kernel $A - \lambda K$ is not copositive, and hence~$A$
is not in the algebraic interior of~$\cop(V)$.


\subsection{Convergence for distance graphs}%
\label{sec:dist-convergence}

Given a locally independent graph~$G = (V, E)$ and a measure~$\omega$ on~$V$,
our goal is to show that~$\gamma_r(G)$ converges to~$\alpha_\omega(G)$.  Then,
using Theorem~\ref{thm:lass-cop-comparison}, we get convergence of both the
Lasserre hierarchy and the $k$-point bound.

Our strategy is as follows.  For~$r \geq 1$, let~$A_r$ be a feasible solution
of~$\gamma_r(G)$.  Note that the feasible region of~$\gamma_r(G)$ is contained
in the feasible region of~$\gamma_1(G)$.  So, if the feasible region
of~$\gamma_1(G)$ is contained in a compact set, then the sequence~$(A_r)$ will
have a converging subsequence, say with limit~$A$.  The goal is then to show
that~$A$ is a feasible solution of the completely positive
formulation~$\vartheta(G, \cp(V))$, so we can finish by using the result of
DeCorte, Oliveira, and Vallentin~\cite{DeCorteOV2022}.

Summarizing, we need to
\begin{enumerate}
\item show that the feasible region of~$\gamma_1(G)$ is contained in some
  compact set in a convenient topology and

\item show that the limit solution~$A$ is feasible for~$\vartheta(G, \cp(V))$.
\end{enumerate}
There is an interplay between~(1) and~(2), since the topology influences what we
know about the limit~$A$, and so if we choose a bad topology,~$A$ will not be
feasible.  As for~(2), the main issue seems to be the
constraints~``$A(x, y) = 0$ for~$xy \in E$''; depending on the graph,~$A$ may
not satisfy these constraints even though each~$A_r$ does.

For distance graphs on the sphere (and related spaces; see below), this strategy
works.

\begin{theorem}%
  \label{thm:sphere-convergence}
  Let~$\omega$ be the surface measure on~$S^{n-1}$.  If~$n \geq 3$ and
  if~$D \subseteq (-1, 1)$ is closed,
  then~$\lim_{r \to \infty} \gamma_r(G) = \alpha_\omega(G)$
  where~$G = G(S^{n-1}, D)$.
\end{theorem}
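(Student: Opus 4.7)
The plan is to establish $\lim_{r \to \infty} \gamma_r(G) \leq \vartheta(G, \cp(S^{n-1}))$ via a compactness argument and then combine this with the identity $\vartheta(G, \cp(S^{n-1})) = \alpha_\omega(G)$ from DeCorte, Oliveira, and Vallentin (which applies since $G$ is locally independent---as $D \subseteq (-1,1)$ is closed, edges stay away from the diagonal---and $S^{n-1}$ is a homogeneous $\orto(n)$-space) together with the general inequality $\gamma_r(G) \geq \alpha_\omega(G)$. So I would take a feasible solution $A_r$ of $\gamma_r(G)$ with objective approaching $\gamma_r(G)$ and aim to extract a subsequential limit that is feasible for $\vartheta(G, \cp(S^{n-1}))$ with at least the same objective value.

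The first step is symmetrization. The orthogonal group $\Gamma = \orto(n)$ acts on $S^{n-1}$ preserving $G$ and all constraints of $\gamma_r(G)$, since $\kpcone_r^*(S^{n-1})$ is $\Gamma$-invariant; so replacing $A_r$ by $\rey{\Gamma} A_r$ preserves feasibility and the objective and yields $A_r(x,y) = f_r(x \cdot y)$ for some continuous $f_r \colon [-1,1] \to \R$ with $f_r(1) = 1/\omega_n$. By Schoenberg's theorem, $f_r(t) = \sum_{k \geq 0} \hat{f}_{r,k} G_k^n(t)$ with $\hat{f}_{r,k} \geq 0$ and $\sum_k \hat{f}_{r,k} = 1/\omega_n$, where $G_k^n$ is the Gegenbauer polynomial normalized so $G_k^n(1) = 1$.

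Next I would extract a limit. The measures $\mu_r(\{k\}) = \omega_n \hat{f}_{r,k}$ are probability measures on the one-point compactification $\N \cup \{\infty\}$ and hence weak-$*$ precompact; pass to a subsequence with $\mu_r \to \mu$ weakly. Define $\tilde{f}(t) = \omega_n^{-1}\sum_{k \geq 0} \mu(\{k\}) G_k^n(t)$ and $\tilde{A}(x,y) = \tilde{f}(x \cdot y)$. The crucial input, and the reason the hypothesis $n \geq 3$ is needed, is that $G_k^n(t) \to 0$ as $k \to \infty$ for every $t \in (-1,1)$; hence $k \mapsto G_k^n(t)$ extends continuously to $\N \cup \{\infty\}$ with value $0$ at $\infty$, and weak-$*$ convergence yields $f_r(t) \to \tilde{f}(t)$ pointwise on $(-1,1)$. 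Since $D \subseteq (-1,1)$ is closed (here the second hypothesis enters) and each $f_r$ vanishes on $D$, so does $\tilde{f}$. An analogous argument, based on orthogonality of distinct kernels $(x, y) \mapsto G_k^n(x \cdot y)$ in $L^2((S^{n-1})^2)$, shows $\langle A_r, Z\rangle \to \langle \tilde{A}, Z\rangle$ for every $Z \in L^2((S^{n-1})^2)$; taking limits in $\langle A_r, Z\rangle \geq 0$ for $Z \in \kpcone_{r'}(S^{n-1})$ with $r' \leq r$ and invoking Theorem~\ref{thm:cp-convergence} gives $\tilde{A} \in \cp(S^{n-1})$.

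The main obstacle is that mass may escape to infinity in the weak-$*$ limit, so that $\tilde{f}(1) = \omega_n^{-1}(1 - \mu(\{\infty\}))$ is strictly less than $1/\omega_n$; geometrically, $\tilde{A}$ acquires a jump discontinuity on the diagonal and fails the normalization constraint. I would fix this by rescaling: $A'' = \tilde{A}/(\omega_n \tilde{f}(1))$ is continuous, completely positive, vanishes on edges, and satisfies $\int A''(x,x)\, d\omega(x) = 1$. Since $\langle A_r, J\rangle = \omega_n^2 \hat{f}_{r,0}$ depends only on the zeroth Gegenbauer coefficient, a short computation gives $\langle A'', J\rangle = \omega_n \hat{f}_0/\tilde{f}(1) \geq \omega_n^2 \hat{f}_0 = \lim_r \langle A_r, J\rangle$, using $\tilde{f}(1) \leq 1/\omega_n$. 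Hence $\vartheta(G, \cp(S^{n-1})) \geq \lim_r \gamma_r(G)$, completing the argument.
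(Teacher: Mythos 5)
Your proposal is correct and follows essentially the same route as the paper's proof: symmetrize with the Reynolds operator, expand via Schoenberg's theorem, extract a weak-$*$ limit of the coefficient sequences (the paper uses Banach--Alaoglu in $\ell^1 = c_0^*$, which is equivalent to your probability measures on $\N \cup \{\infty\}$ with possible mass at $\infty$), use that $P_k^n(t) \to 0$ for $t \in (-1,1)$ when $n \geq 3$ to preserve the edge constraints in the limit, invoke Theorem~\ref{thm:cp-convergence} for complete positivity, and rescale by the possibly deficient trace before applying the DeCorte--Oliveira--Vallentin identity $\vartheta(G, \cp(S^{n-1})) = \alpha_\omega(G)$. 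The only point worth making explicit is that your rescaling needs $\tilde{f}(1) > 0$, which holds because $\{0\}$ is clopen in $\N \cup \{\infty\}$, so $\hat{f}_0 = \lim_r \hat{f}_{r,0} = \lim_r \langle A_r, J\rangle/\omega_n^2 > 0$; this is exactly the role of the paper's requirement $\langle A_r, J\rangle \geq \alpha_\omega(G)/2 > 0$.
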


So the~$\gamma_r$ hierarchy converges for Witsenhausen's
graph~$G = G(S^{n-1}, \{0\})$, as long as~$n \geq 3$.  For~$n = 2$, the theorem
does not provide any information, even though in this
case~$\gamma_1(G) = \alpha_\omega(G)$
(cf.~Oliveira~\cite[\S3.5a]{Oliveira2009}).

The proof requires some facts about harmonic analysis on the sphere; see
Andrews, Askey, and Roy~\cite{AndrewsAR1999} for background.  Let~$\omega$ be
the surface measure on~$S^{n-1}$ and write~$\omega_n = \omega(S^{n-1})$.

For~$k \geq 0$, let~$H_k^n \subseteq C(S^{n-1})$ be the space of $n$-variable
spherical harmonics of degree~$k$; denote the dimension of~$H_k^n$ by~$h_k^n$
and let~$S_{k,1}^n$, \dots,~$S_{k,h_k^n}^n$ be an orthonormal basis of~$H_k^n$.
The set of all spherical harmonics~$S_{k,i}^n$ for all~$k$ and~$i$ forms a
complete orthonormal system of~$L^2(S^{n-1})$.

Let~$P_k^n$ be the Jacobi polynomial of degree~$k$ and
parameters~$\alpha = \beta = (n-3)/2$ normalized so~$P_k^n(1) = 1$.  The
addition formula~\cite[Theorem~9.6.3]{AndrewsAR1999} states that
\[
  P_k^n(x \cdot y) = \frac{\omega_n}{h_k^n} \sum_{i=1}^{h_k^n} S_{k,i}^n(x)
  S_{k,i}^n(y)
\]
for all~$x$, $y \in S^{n-1}$.

Write~$E_k^n(x, y) = P_k^n(x \cdot y)$ and let~$\orto(n)$ be the orthogonal group
on~$\R^n$.  The kernels~$E_k^n$ are $\orto(n)$-invariant and form a complete
orthogonal system of the space $L^2((S^{n-1})^2)^{\orto(n)}$ of
$\orto(n)$-invariant kernels.  We have~$\langle E_k^n, E_l^n\rangle = 0$
if~$k \neq l$ and $\langle E_k^n, E_k^n\rangle = \omega_n^2 / h_k^n$.  So
if~$A = \sum_{k=0}^\infty f(k) E_k^n$ and~$B = \sum_{k=0}^\infty g(k) E_k^n$,
then
\begin{equation}%
  \label{eq:schoenberg-product}
  \langle A, B\rangle = \sum_{k=0}^\infty f(k) g(k) \omega_n^2/h_k^n.
\end{equation}

From the addition formula, each~$E_k^n$ is positive semidefinite,
hence~$\sum_{k=0}^\infty f(k) E_k^n$ is positive semidefinite if and only
if~$f(k) \geq 0$ for all~$k$.  Schoenberg's theorem~\cite{Schoenberg1942} states
that if a kernel~$K\colon (S^{n-1})^2 \to \R$ is continuous,
$\orto(n)$-invariant, and positive semidefinite, then there is a nonnegative
sequence~$f\colon \N \to \R$ with~$\sum_{k=0}^\infty f(k) < \infty$ such that
\begin{equation}%
  \label{eq:schoenberg}
  K(x, y) = \sum_{k=0}^\infty f(k) E_k^n(x, y)
  = \sum_{k=0}^\infty f(k) P_k^n(x \cdot y)
\end{equation}
with absolute and uniform convergence over~$S^{n-1} \times S^{n-1}$.

Conversely, if~$f\colon \N \to \R$ is a nonnegative sequence such
that~$\sum_{k=0}^\infty f(k) < \infty$, then the series in~\eqref{eq:schoenberg}
converges absolutely and uniformly over~$S^{n-1} \times S^{n-1}$ and the
kernel~$K$ it defines is continuous, $\orto(n)$-invariant, and positive
semidefinite.

\begin{proof}[Proof of Theorem~\ref{thm:sphere-convergence}]
We know from~\S\ref{sec:cop-hierarchy} that
$\lim_{r \to \infty} \gamma_r(G) \geq \vartheta(G, \cp(S^{n-1}))$; let us show
the reverse inequality, thus establishing the result by using Theorem~5.1 from
DeCorte, Oliveira, and Vallentin~\cite{DeCorteOV2022}.

The cone~$\kpcone_r(S^{n-1})$ is invariant under~$\orto(n) \subseteq \aut(G)$,
so if~$A$ is a feasible solution of~$\gamma_r(G)$, then so is its
symmetrization~$\rey{\orto(n)}A$.  It follows that we may restrict ourselves
in~$\gamma_r(G)$ to $\orto(n)$-invariant kernels.

For~$r \geq 1$, let~$A_r$ be any $\orto(n)$-invariant feasible solution
of~$\gamma_r(G)$ such
that $\langle A_r, J\rangle \geq \alpha_\omega(G) / 2 > 0$.  For
each~$r \geq 1$, use Schoenberg's theorem to get a nonnegative
sequence~$f_r\colon \N \to \R$ such that~$A_r = \sum_{k=0}^\infty f_r(k) E_k^n$.
Note that
\begin{equation}%
  \label{eq:Ar-trace}
  \omega_n \sum_{k=0}^\infty f_r(k) = \int_{S^{n-1}} A_r(x, x)\, d\omega(x)
  = 1
\end{equation}
for every~$r \geq 1$.

Let~$\ell^1$ be the space of all sequences~$f\colon \N \to \R$ such
that~$\|f\|_1 = \sum_{k=0}^\infty |f(k)| < \infty$ and let~$c_0$ be the space of
all sequences~$f\colon \N \to \R$ that vanish at infinity.  Under the supremum
norm,~$c_0$ is a Banach space and its dual is~$\ell^1$; the duality pairing
is~$(f, g) = \sum_{k=0}^\infty f(k) g(k)$.

Let~$\Bcal = \{\, f \in \ell^1 : \|f\|_1 \leq 1\,\}$; from~\eqref{eq:Ar-trace}
we know that~$\omega_n f_r \in \Bcal$ for all~$r$.  Alaoglu's
theorem~\cite[Theorem~5.18]{Folland1999} says that~$\Bcal$ is weak* compact.
Since the weak* topology on~$\Bcal$ is also metrizable~\cite[p.~171,
Exercise~50]{Folland1999}, the sequence~$(f_r)$ has a converging subsequence;
assume that~$(f_r)$ itself converges to~$f$.  This means that, if~$g \in c_0$,
then $\lim_{r \to \infty} (f_r, g) = (f, g)$.

It follows immediately that~$f$ is nonnegative and, using~\eqref{eq:Ar-trace},
that~$\omega_n\|f\|_1 \leq 1$.  So by Schoenberg's theorem the
kernel~$A = \sum_{k=0}^\infty f(k) E_k^n$ is continuous, $\orto(n)$-invariant,
and positive semidefinite.  Moreover, if~$B \in L^2((S^{n-1})^2)^{\orto(n)}$,
then~$B = \sum_{k=0}^\infty g(k) E_k^n$ for some sequence~$g$, and
since~$k \mapsto g(k) \omega_n^2 / h_k^n$ vanishes at infinity we have
from~\eqref{eq:schoenberg-product} that
\[
  \lim_{r \to \infty} \langle A_r, B\rangle = \langle A, B\rangle.
\]

We then have
that~$\langle A, J\rangle = \lim_{r\to\infty} \langle A_r, J\rangle > 0$, and
so~$A$ is nonzero and
hence~$0 < \tau = \int_V A(x, x)\, d\omega(x) = \omega_n\|f\|_1 \leq 1$.  We
show that~$A$ is completely positive and that~$A(x, y) = 0$
whenever~$x \cdot y \in D$, thus showing that~$\tau^{-1} A$ is a feasible
solution of~$\vartheta(G, \cp(S^{n-1}))$ and hence
that~$\vartheta(G, \cp(S^{n-1})) \geq \lim_{r\to\infty} \gamma_r(G)$, finishing
the proof.

If~$Z \in \kpcone_s(S^{n-1})$ for some~$s$, then
\[
  \langle A, Z\rangle = \langle A, \rey{\orto(n)}Z\rangle =
  \lim_{r\to\infty} \langle A_r, \rey{\orto(n)}Z\rangle \geq 0,
\]
and using Theorem~\ref{thm:cp-convergence} we see that~$A$ is completely
positive.  Next, since~$n \geq 3$ the asymptotic formula for the Jacobi
polynomials~\cite[Theorem~8.21.8]{Szego1975} implies that~$k \mapsto P_k^n(t)$
vanishes at infinity for every~$t \in D \subseteq (-1, 1)$.  So, if~$x$,
$y \in S^{n-1}$ are such that~$x \cdot y \in D$, then
\[
  A(x, y) = \sum_{k=0}^\infty f(k) P_k^n(x \cdot y) =
  \lim_{r\to\infty}\sum_{k=0}^\infty f_r(k) P_k^n(x\cdot y) = 0,
\]
and~$A(x, y) = 0$ whenever~$x\cdot y \in D$, as we wanted.
\end{proof}

The reason we had to require~$n \geq 3$ in Theorem~\ref{thm:sphere-convergence}
is to get the sequence $k \mapsto P_k^n(t)$ to vanish at infinity for~$t \in D$
so the limit~$A$ would satisfy the edge constraints.  The same trick works also
for manifolds similar to the sphere, namely the continuous, compact, two-point
homogeneous spaces~\cite{Wang1952}: the sphere, the real, complex, and
quaternionic projective spaces, and the octonionic projective plane.  For these
spaces, a theorem like Schoenberg's theorem~\cite[Theorem~3.1]{OliveiraV2013}
gives a characterization of continuous, invariant, and positive-semidefinite
kernels in terms of an expansion like~\eqref{eq:schoenberg} involving Jacobi
polynomials.  As long as the real dimension of the space is at least~2, we
get sequences vanishing at infinity as we like.  So for these spaces both
Theorem~\ref{thm:sphere-convergence} and its proof can be easily adapted.


\subsection{Convergence on homogeneous spaces}

Let~$X$ be a topological space and~$\mu$ be a Borel measure on~$X$.  We say that
a measurable set~$U \subseteq X$ is \defi{$\mu$-thick} if for every open
set~$A \subseteq X$ such that~$A \cap U \neq \emptyset$ we
have~$\mu(A \cap U) > 0$.

\begin{theorem}%
  \label{thm:thick-convergence}
  Let~$G = (V, E)$ be a locally independent graph and
  let~$\Gamma \subseteq \aut(G)$ be a compact group.  If~$V$ is a homogeneous
  $\Gamma$-space, if~$\omega$ is the pushforward to~$V$ of the Haar measure
  on~$\Gamma$, and if~$E$ is $\omega$-thick, then
  $\lim_{r\to\infty} \gamma_r(G) = \vartheta(G, \cp(V))$.
\end{theorem}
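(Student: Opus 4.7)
The proof follows the template of Theorem~\ref{thm:sphere-convergence} with two substitutions: Peter--Weyl theory replaces Schoenberg's expansion into spherical harmonics, and the $\omega$-thickness of $E$ replaces the asymptotic vanishing of $k\mapsto P_k^n(t)$ for $t \in (-1,1)$. Since $\cp(V)\subseteq \kpcone_r^*(V)$, the inequality $\lim_{r\to\infty}\gamma_r(G)\geq \vartheta(G,\cp(V))$ is immediate; I only need the reverse inequality, which, combined with the exactness result $\vartheta(G,\cp(V)) = \alpha_\omega(G)$ of DeCorte, Oliveira, and Vallentin, will finish the proof.

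For each $r \geq 1$ I would pick a feasible $A_r$ of $\gamma_r(G)$ with $\langle A_r, J\rangle$ approaching $\gamma_r(G)$. Applying $\rey{\Gamma}$---which preserves every constraint because $\Gamma\subseteq\aut(G)$, the cone $\kpcone_r^*(V)$ is $\Gamma$-invariant, and $\omega$ is $\Gamma$-invariant---I may assume each $A_r$ is $\Gamma$-invariant. Transitivity of $\Gamma$ then forces $A_r(x,x)$ to be the constant $1/\omega(V)$, and the Cauchy--Schwarz inequality for positive-semidefinite kernels gives $|A_r(x,y)|\leq 1/\omega(V)$, so in particular $\|A_r\|_2 \leq 1$. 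The Banach--Alaoglu theorem in $L^2(V^2)$ now supplies a subsequence converging weakly to some $A \in L^2(V^2)$; both $\Gamma$-invariance and positive semidefiniteness pass to the weak limit.

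The crucial step is to show that $A$ admits a continuous representative. Here I would view the $A_r$ as kernels of $\Gamma$-equivariant positive trace-class operators $\Phi_r$ of trace $1$ on $L^2(V)$. The trace-class operators form the Banach dual of the compact operators, so $(\Phi_r)$ has a weak-$*$ convergent subsequence, with limit $\Phi$ also $\Gamma$-equivariant, positive, trace-class, of trace at most $1$. By the Peter--Weyl theorem $L^2(V)$ decomposes into isotypic components under $\Gamma$, and by Schur's lemma $\Phi$ corresponds to a family of positive-semidefinite matrices $(M_\pi)$, one for each irreducible representation $\pi$ of $\Gamma$, with $\sum_\pi \trace(M_\pi)\dim V_\pi \leq 1$. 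The kernel of $\Phi$ is then an absolutely and uniformly convergent sum of continuous zonal kernels and is therefore continuous; it coincides with $A$ almost everywhere, so $A$ may be taken continuous. This generalization of Schoenberg's theorem from $S^{n-1}$ to an arbitrary compact homogeneous $\Gamma$-space is the main technical obstacle.

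With continuity in hand, I would verify the remaining constraints of $\vartheta(G,\cp(V))$. For $A \in \cp(V)$: for every $Z \in \kpcone_s(V)$ and $r \geq s$ one has $\langle A_r, Z\rangle\geq 0$, so weak convergence yields $\langle A, Z\rangle\geq 0$, and Theorem~\ref{thm:cp-convergence} gives $A \in \cp(V)$. For the edge constraints, since $A_r \equiv 0$ on $E$, weak convergence forces $\int_X A\,d(\omega\times\omega) = 0$ for every measurable $X \subseteq E$, so $A = 0$ almost everywhere on $E$. This is where $\omega$-thickness enters: if $A(x_0, y_0)\neq 0$ for some $(x_0, y_0) \in E$, continuity provides an open neighborhood $U$ of $(x_0, y_0)$ on which $A$ is bounded away from zero, and $\omega$-thickness gives $(\omega\times\omega)(U\cap E)>0$, contradicting $A = 0$ almost everywhere on $E$. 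Rescaling $A$ by its diagonal integral---which is positive in the only nontrivial case $\lim_{r\to\infty}\gamma_r(G) > 0$---produces a feasible solution of $\vartheta(G,\cp(V))$ whose objective value is at least $\lim_{r\to\infty}\gamma_r(G)$.
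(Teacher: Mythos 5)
Your argument is correct, and its skeleton---symmetrize with $\rey{\Gamma}$, bound $\|A_r\|_2$ via the constant diagonal, extract a weak limit by Banach--Alaoglu, obtain complete positivity from Theorem~\ref{thm:cp-convergence}, use $\omega$-thickness plus continuity for the edge constraints, and rescale by the diagonal integral (which is at most~$1$)---is the same as in the paper. Where you genuinely diverge is in how continuity of the limit is obtained. The paper first shows the weak limit~$A$ is trace class with $\trace A \leq 1$ by testing partial spectral sums $\sum_{k\leq N} f_k \otimes f_k$ against the~$A_r$, and then invokes Lemma~\ref{lem:trace-class-cont}: for a positive-semidefinite trace-class kernel, $\rey{\Gamma}A$ is continuous, the proof being the spectral decomposition together with Lemma~\ref{lem:tensor-reynolds} (each $\rey{\Gamma}(f_k \otimes f_k)$ is continuous with constant diagonal~$1$, so the series converges uniformly). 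You instead pass to the trace-class operators as the dual of the compacts, take a weak-$*$ limit there, and decompose the equivariant limit via Peter--Weyl and Schur's lemma into isotypic blocks, summing continuous invariant kernels whose supremum norms are controlled by $\trace(M_\pi)\dim V_\pi$; the needed bound again comes from transitivity (constant diagonal) plus positive semidefiniteness, so your ``generalized Schoenberg'' step is exactly the content of Lemma~\ref{lem:trace-class-cont}, reached through representation theory rather than the spectral theorem. Both routes work: the paper's is lighter (no Peter--Weyl, no identification of the weak-$*$ and weak-$L^2$ limits), while yours makes the $\Gamma$-equivariance of the limit do the work that the Reynolds operator does in the paper, and your handling of the edge constraints (test against $\chi_X$ for measurable $X \subseteq E$, then combine continuity with thickness pointwise) is slightly more direct than the paper's pairing $\langle A, \rey{\Gamma}B\rangle$ with $B$ supported on~$E$. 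One small correction: the exactness $\vartheta(G, \cp(V)) = \alpha_\omega(G)$ is neither needed nor available under the stated hypotheses---the theorem claims only $\lim_{r\to\infty}\gamma_r(G) = \vartheta(G, \cp(V))$, and the paper notes that exactness requires extra assumptions---so drop that appeal; your two inequalities already finish the proof.
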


An example of a graph satisfying the hypotheses of the theorem
is~$G(S^{n-1}, [a, b])$ where~$-1 \leq a < b < 1$.  Another example, not covered
by Theorem~\ref{thm:sphere-convergence}, is the graph on the torus~$\R^n / \Z^n$
where two points are adjacent if their geodesic distance lies in~$[a, b]$.  In
contrast, Witsenhausen's graph~$G(S^{n-1}, \{0\})$ does not satisfy the
hypotheses.

Note that the theorem does not state that~$\gamma_r(G) \to \alpha_\omega(G)$.
To prove this we need to know that~$\vartheta(G, \cp(V)) = \alpha_\omega(G)$,
and for this we need a few extra technical
assumptions~\cite[Theorem~5.1]{DeCorteOV2022} on~$\Gamma$ and~$G$.

For the proof we will need the following lemma.

\begin{lemma}%
  \label{lem:trace-class-cont}
  Let~$\Gamma$ be a compact group,~$V$ be a homogeneous $\Gamma$-space,
  and~$\omega$ be the pushforward to~$V$ of the Haar measure on~$\Gamma$.
  If~$A \in \lsym(V)$ is trace class, then~$\rey{\Gamma}A$ is continuous.
\end{lemma}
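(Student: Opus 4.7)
The plan is to apply the spectral theorem to~$A$, write $\rey{\Gamma}A$ as a pointwise series of continuous kernels, and verify that this series converges uniformly on~$V^2$. Since~$A \in \lsym(V)$ is self-adjoint, positive semidefinite, and trace class, the spectral theorem gives an orthonormal sequence~$(f_i)$ in~$L^2(V)$ and eigenvalues~$\lambda_i \geq 0$ with~$\sum_i \lambda_i = \trace(A) < \infty$ such that~$A = \sum_i \lambda_i\, f_i \otimes f_i$, with convergence in trace norm, and hence also in the Hilbert--Schmidt, i.e.\ $L^2(V^2)$, norm. The Reynolds operator is an $L^2$-contraction (it is an average of the isometries~$\sigma$), so it commutes with the sum, giving
\[
  \rey{\Gamma} A = \sum_i \lambda_i\, \rey{\Gamma}(f_i \otimes f_i)
\]
as an identity in~$L^2(V^2)$.

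By Lemma~\ref{lem:tensor-reynolds} applied with~$k = 2$, each summand~$\rey{\Gamma}(f_i \otimes f_i)$ is a continuous function on~$V^2$. To conclude that $\rey{\Gamma}A$ admits a continuous representative it suffices to show that the series above converges uniformly. For that I would apply the Cauchy--Schwarz inequality in~$L^2(\Gamma)$:
\[
  |\rey{\Gamma}(f_i \otimes f_i)(x, y)|
    \leq \Bigl(\int_\Gamma f_i(\sigma x)^2\, d\mu(\sigma)\Bigr)^{1/2}
         \Bigl(\int_\Gamma f_i(\sigma y)^2\, d\mu(\sigma)\Bigr)^{1/2}.
\]
Here the hypothesis that~$\omega$ is the pushforward of the Haar measure on~$\Gamma$ plays the key role: fixing~$e \in V$ and writing~$x = \tau_x e$, the right-invariance of~$\mu$ together with the definition of the pushforward yields
\[
  \int_\Gamma f_i(\sigma x)^2\, d\mu(\sigma)
    = \int_\Gamma f_i(\sigma e)^2\, d\mu(\sigma)
    = \int_V f_i^2\, d\omega = \|f_i\|_2^2 = 1
\]
for every~$x \in V$. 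Hence~$|\rey{\Gamma}(f_i \otimes f_i)(x, y)| \leq 1$ uniformly on~$V^2$, independently of~$i$.

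A direct application of the Weierstrass M-test with dominating series $\sum_i \lambda_i < \infty$ then gives uniform convergence of $\sum_i \lambda_i\, \rey{\Gamma}(f_i \otimes f_i)$ to a continuous function~$g$ on~$V^2$. Since the partial sums converge in~$L^2(V^2)$ to~$\rey{\Gamma}A$, the function~$g$ is a continuous representative of~$\rey{\Gamma}A$, as required.

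The main obstacle is precisely the uniform pointwise bound on each~$\rey{\Gamma}(f_i \otimes f_i)$: it is only the fact that~$\omega$ is the pushforward of Haar measure that lets us identify~$\|f_i\|_2$ with the~$L^2(\Gamma)$-norm of the orbit map $\sigma \mapsto f_i(\sigma x)$ uniformly in~$x$. Without this hypothesis the constants in the Cauchy--Schwarz estimate would depend on~$x$, and the Weierstrass argument would break down; the spectral theorem alone is not enough to hand us continuity.
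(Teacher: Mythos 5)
Your proof is correct and follows essentially the same route as the paper: spectral decomposition of the trace-class kernel, continuity of each $\rey{\Gamma}(f_i \otimes f_i)$ via Lemma~\ref{lem:tensor-reynolds}, a uniform bound $|\rey{\Gamma}(f_i \otimes f_i)(x,y)| \leq 1$ coming from transitivity and the pushforward measure, and uniform convergence from $\sum_i \lambda_i < \infty$. The only cosmetic difference is that you get the bound from Cauchy--Schwarz in $L^2(\Gamma)$, while the paper uses positive semidefiniteness of the $2\times 2$ principal submatrix of the continuous kernel $E_i$ together with $E_i(x,x)=1$ --- the same estimate in slightly different clothing.
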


\begin{proof}
Consider the spectral decomposition of~$A$, namely
\[
  A = \sum_{k=1}^\infty \lambda_k f_k \otimes f_k
\]
with~$L^2$ convergence, where~$f_k \in L^2(V)$ are pairwise orthogonal
and~$\|f_k\|_2 = 1$ for all~$k$.  Since~$A$ is trace class,~$\sum_{k=1}^\infty
|\lambda_k| < \infty$.

With~$E_k = \rey{\Gamma}(f_k \otimes f_k)$ we have~$\rey{\Gamma}A =
\sum_{k=1}^\infty \lambda_k E_k$ with~$L^2$ convergence. From
Lemma~\ref{lem:tensor-reynolds}, each~$E_k$ is continuous.  Since~$\Gamma$ acts
transitively, we also have~$E_k(x, x) = \langle f_k, f_k\rangle = 1$ for
all~$k$.

Each kernel~$E_k$ is continuous and positive semidefinite, so given~$x$,
$y \in V$ the matrix
\[
  \begin{pmatrix}
    E_k(x, x)&E_k(x, y)\\
    E_k(x, y)&E_k(y, y)
  \end{pmatrix}
\]
is positive semidefinite, whence~$|E_k(x, y)| \leq 1$.  Given~$\epsilon > 0$,
let~$N$ be such that~$\sum_{k=N}^\infty |\lambda_k| < \epsilon$.  Then for
all~$x$, $y \in V$ we have
\[
  \biggl|\sum_{k=N}^\infty \lambda_k E_k(x, y)\biggr| \leq \sum_{k=N}^\infty
  |\lambda_k| |E_k(x, y)| \leq \sum_{k=N}^\infty |\lambda_k| < \epsilon.
\]
So the series~$\sum_{k=1}^\infty \lambda_k E_k(x, y)$ converges absolutely and
uniformly over $V \times V$, and~$A$ is continuous.
\end{proof}

\begin{proof}[Proof of Theorem~\ref{thm:thick-convergence}]
We know from~\S\ref{sec:cop-hierarchy} that
$\lim_{r\to\infty} \gamma_r(G) \geq \vartheta(G, \cp(V))$; let us show the
reverse inequality.

The cone~$\kpcone_r(V)$ is invariant under~$\Gamma$, so if~$A$ is a feasible
solution of~$\gamma_r(G)$, then so is its symmetrization~$\rey{\Gamma}A$.  It
follows that we may restrict ourselves in~$\gamma_r(G)$ to $\Gamma$-invariant
kernels.

For~$r \geq 1$, let~$A_r$ be any $\Gamma$-invariant feasible solution
of~$\gamma_r(G)$ such that~$\langle A, J\rangle \geq \alpha_\omega(G) / 2 > 0$.
Since~$A_r$ is
$\Gamma$-invariant,~$A_r(x, x) = \int_V A_r(y, y)\, d\omega(y) = 1$ for
all~$x$.  Since~$A_r$ is continuous and positive semidefinite, for
all~$x$, $y \in V$ the matrix
\[
  \begin{pmatrix}
    A_r(x, x)&A_r(x, y)\\
    A_r(x, y)&A_r(y, y)
  \end{pmatrix}
\]
is positive semidefinite, whence~$|A_r(x, y)| \leq 1$.  So for~$r \geq 1$ we
have~$\|A_r\|_2 \leq 1$,
thus~$A_r \in \Bcal = \{\, F \in \lsym(V) : \|F\|_2 \leq 1\,\}$ for
all~$r \geq 1$.

Alaoglu's theorem~\cite[Theorem~5.18]{Folland1999} says that~$\Bcal$ is compact
in the weak topology of~$\lsym(V)$.  The weak topology on~$\Bcal$ is moreover
metrizable~\cite[p.~171, Exercise~50]{Folland1999}, so the sequence~$(A_r)$ has
a weakly converging subsequence; assume that the sequence itself converges
to~$A$.

This means that for all~$B \in \lsym(V)$ we have
\[
  \lim_{r\to\infty} \langle A_r, B\rangle = \langle A, B\rangle.
\]
In
particular,~$\langle A, J\rangle = \lim_{r\to\infty} \langle A_r, J\rangle > 0$,
and so~$A \neq 0$.

If~$Z \in \kpcone_s(V)$ for some~$s$,
then~$\langle A, Z\rangle = \lim_{r\to\infty} \langle A_r, Z\rangle \geq 0$, and
so from Theorem~\ref{thm:cp-convergence} it follows that~$A$ is completely
positive.  We claim that~$A$ is trace class and~$\trace A \leq 1$.

Indeed, consider the spectral decomposition
\[
  A = \sum_{k=1}^\infty \lambda_k f_k \otimes f_k
\]
of~$A$, where~$f_k \in L^2(V)$ are pairwise orthogonal and~$\|f_k\|_2 = 1$ for
all~$k$.  Since~$A$ is positive semidefinite (as it is completely positive), we
have~$\lambda_k \geq 0$ for all~$k$.  Since every~$A_r$ is positive semidefinite
as well, for every~$N \geq 1$ we have
\[
  \begin{split}
    \sum_{k=1}^N \lambda_k &= \Bigl\langle A, \sum_{k=1}^N f_k \otimes
    f_k\Bigr\rangle\\
    &=\lim_{r\to\infty} \Bigl\langle A_r, \sum_{k=1}^N f_k \otimes
    f_k\Bigr\rangle\\
    &\leq \lim_{r\to\infty} \trace A_r = 1,
  \end{split}
\]
hence~$\trace A \leq 1$.

From Lemma~\ref{lem:trace-class-cont} we see that~$\rey{\Gamma}A$ is
continuous.  Then, since~$E$ is $\omega$-thick, $(\rey{\Gamma}A)(x, y) = 0$ for
all~$xy \in E$ if and only if~$\langle\rey{\Gamma}A, B\rangle = 0$ for all~$B
\in \lsym(V)$ with support contained in~$E$.  For any such~$B$, since
also~$\Gamma \subseteq \aut(G)$, we have
\[
  \langle\rey{\Gamma}A, B\rangle = \langle A, \rey{\Gamma}B\rangle =
  \lim_{r\to\infty} \langle A_r, \rey{\Gamma}B\rangle = 0,
\]
and so~$(\rey{\Gamma} A)(x, y) = 0$ for all~$xy \in E$.

Finally, since~$A$ is nonzero and positive semidefinite we have $\trace A > 0$.
From Mercer's theorem we see
that~$\int_V (\rey{\Gamma} A)(x, x)\, d\omega(x) = \trace A \leq 1$.
So~$(\trace A)^{-1} (\rey{\Gamma} A)$ is a feasible solution
of~$\vartheta(G, \cp(V))$ and
hence~$\lim_{r\to\infty} \gamma_r(G) \leq \vartheta(G, \cp(V))$, as we wanted.
\end{proof}


\section{Computations}%
\label{sec:computations}

We have seen three hierarchies for the independence number of locally
independent graphs and conditions under which they converge, but can we use them
to get better bounds for, say, Witsenhausen's problem?

It turns out that the Lasserre hierarchy and the $k$-point bound are hard to
compute in this case, as we will see in~\S\ref{sec:3-pt-bound}.  This motivates
the introduction of another completely positive hierarchy, stronger than the one
of~\S\ref{sec:cop-hierarchy}, which we use to obtain better bounds for
Witsenhausen's problem (see Tables~\ref{tab:bounds} and~\ref{tab:detailed}).

In this section, we write~$\langle X, Y\rangle = \trace X^\tp Y$ for the trace
inner product between matrices~$X$, $Y \in \R^{n \times n}$.


\subsection{The $3$-point bound}%
\label{sec:3-pt-bound}

Our goal is to compute~$\Delta_3(G_n)$ for Witsenhausen's
graph~$G_n = G(S^{n-1}, \{0\})$.  To simplify the discussion, we use an
alternative normalization for~\eqref{opt:k-point}, namely
\[
  \begin{optprob}
    \sup&\int_{S^{n-1}}\int_{S^{n-1}} \nu(\{x, y\})\, d\omega(y)
    d\omega(x)\\[2pt]
    &\int_{S^{n-1}} \nu(\{x\}) = 1,\\[2pt]
    &\nu(S) = 0\quad\text{if~$S \in \sub{V}{3}$ is not independent,}\\
    &\text{$M_\emptyset\nu$ is positive semidefinite,}\\
    &\text{$M_{\{u\}}\nu$ is positive semidefinite for every $u \in S^{n-1}$,}\\
    &\nu \in C(\sub{V}{3}).
  \end{optprob}
\]
With this normalization,~$\Delta_3(G_n)$ is at most the optimal value of the
problem above.  It is not hard to give a direct proof of this assertion; the
relation between different normalizations is developed in the proof of
Theorem~\ref{thm:lass-cop-comparison}.

We simplify the problem further by disregarding the empty set in~$M_Q\nu$.  More
precisely, we change the operator~$M_Q$, where~$|Q| \leq 1$, so that it maps the
function~$\nu$ to the kernel~$K \in C((S^{n-1})^2)$ such
that~$K(x, y) = \nu(Q \cup \{x,y\})$.

When solving the resulting problem we may restrict ourselves to
$\orto(n)$-invariant functions~$\nu$, since we can apply the Reynolds operator
to any feasible solution~$\nu$ and obtain as a result an $\orto(n)$-invariant
feasible solution with the same objective value.  Let now~$e \in S^{n-1}$ be the
north pole (or any other fixed point).  Given any~$u \in S^{n-1}$ there is an
orthogonal transformation~$T$ such that~$Tu = e$, hence if~$\nu$ is an
$\orto(n)$-invariant feasible solution, then
\[
  (M_{\{u\}}\nu)(x, y) = \nu(\{u, x, y\}) = \nu(\{e, Tx, Ty\}) =
  (M_{\{e\}}\nu)(Tx, Ty).
\]
It follows that if~$M_{\{e\}}\nu$ is positive semidefinite, then so
is~$M_{\{u\}}\nu$ for every~$u \in S^{n-1}$.  Note moreover that, since~$\nu$ is
$\orto(n)$-invariant, then~$M_{\{e\}}\nu$ is invariant under the
stabilizer~$\stab{\orto(n)}{e}$ of~$e$.  This allows us to rewrite the problem by
considering the two kernels~$A = M_\emptyset \nu$ and~$K = M_{\{e\}}\nu$:
\begin{equation}
  \label{opt:3-pt-rewrite}
  \begin{optprob}
    \sup&\onerow{\int_{S^{n-1}}\int_{S^{n-1}} A(x, y)\, d\omega(y) d\omega(x)}\\[2pt]
    &\onerow{\int_{S^{n-1}} A(x, x)\, d\omega(x) = 1,}\\[2pt]
    &A(x, y) = K(e, Ty)&\text{for every~$x$, $y \in S^{n-1}$ and~$T \in
      \orto(n)$ with $Tx = e$,}\\
    &K(e, x) = K(x, x)&\text{for every~$x \in S^{n-1}$,}\\
    &K(x, y) = 0&\text{if~$\{e, x, y\}$ is not independent,}\\
    &\onerow{\text{$A \in C((S^{n-1})^2)$ is $\orto(n)$-invariant and
        positive semidefinite,}}\\
    &\onerow{\text{$K \in C((S^{n-1})^2)$ is $\stab{\orto(n)}{e}$-invariant
        and positive semidefinite.}}
  \end{optprob}
\end{equation}

Since~$A$ is $\orto(n)$-invariant, Schoenberg's theorem can be used to
express~$A$ in terms of Jacobi polynomials as in~\eqref{eq:schoenberg}.  The
kernel~$K$ is invariant under the stabilizer subgroup~$\stab{\orto(n)}{e}$; the
parametrization of such a kernel was given by Bachoc and
Vallentin~\cite{BachocV2008} and is as follows.

With~$P_k^n$ being the Jacobi polynomial used in~\eqref{eq:schoenberg},
for~$n \geq 2$ and~$k \geq 0$ consider the polynomial
\[
  Q_k^n(u, v, t) = (1 - u^2)^{k/2} (1 - v^2)^{k/2} P_k^n\biggl(\frac{t - uv}{(1
    - u^2)^{1/2} (1 - v^2)^{1/2}}\biggr)
\]
and let~$Y_{k,d}^n$ be the $(d - k + 1) \times (d - k + 1)$ matrix given by
\begin{equation}%
  \label{eq:Y-matrix}
  (Y_{k,d}^n)_{ij}(u, v, t) = u^i v^j Q_k^{n-1}(u, v, t)
\end{equation}
for~$0 \leq i, j \leq d - k$.

If~$K \in C((S^{n-1})^2)$ is $\stab{\orto(n)}{e}$-invariant, then~$K(x, y)$
depends only on~$e\cdot x$, $e\cdot y$, and~$x\cdot y$.  Bachoc and Vallentin
showed that, for any~$d$ and any choice of positive semidefinite
matrices~$F_k \in \R^{(d-k+1) \times (d-k+1)}$, the kernel
\begin{equation}%
  \label{eq:bachocv-kernel}
  K(x, y) = \sum_{k=0}^d \langle F_k, Y_{k,d}^n(e\cdot x, e\cdot y, x\cdot y)\rangle
\end{equation}
is $\stab{\orto(n)}{e}$-invariant and positive semidefinite.  (Note that~$K$ is
continuous by construction, since it is a polynomial on the three inner products
above.)

We would like to say that, conversely, any continuous, positive-semidefinite,
and $\stab{\orto(n)}{e}$-invariant kernel can be expressed as
in~\eqref{eq:bachocv-kernel} with pointwise uniform convergence, like
Schoenberg's theorem asserts for $\orto(n)$-invariant kernels.  This cannot be
the case with~\eqref{eq:bachocv-kernel} as it is given, since it only captures
kernels coming from polynomials.  Even if we use in~\eqref{eq:bachocv-kernel} an
infinite sum and allow the matrices~$F_k$ to be infinite, we can still not
achieve pointwise convergence.  What can be shown~\cite[Theorem~A.8]{Laat2020}
however is that any continuous, positive-semidefinite, and
$\stab{\orto(n)}{e}$-invariant kernel can be uniformly approximated by kernels
of the form~\eqref{eq:bachocv-kernel} by taking larger and larger degree~$d$.

To solve~\eqref{opt:3-pt-rewrite} we parameterize~$A$ using Schoenberg's theorem
and~$K$ using~\eqref{eq:bachocv-kernel}.  A first problem pops up: even if we
take~$d = \infty$ and use~\eqref{eq:bachocv-kernel} with infinite matrices,
since we do not have pointwise convergence the constraint ``$K(x, y) = 0$
if~$\{e, x, y\}$ is not independent'' cannot be equivalently rewritten in terms
of the expansion~\eqref{eq:bachocv-kernel}, hence it is unclear that the
resulting problem gives an upper bound for~$\alpha_\omega(G_n)$.  The same
happens with other constraints in~\eqref{opt:3-pt-rewrite}.

We could circumvent this problem by requiring
that~$K(x, y) \in [-\epsilon, \epsilon]$ for some fixed~$\epsilon > 0$,
rewriting other constraints in a similar fashion.  Still, using the resulting
problem to get a rigorous upper bound for the measurable independence number
remains difficult.

Indeed, to solve the modified problem~\eqref{opt:3-pt-rewrite} we have to fix
the degrees of the polynomials at some point.  Note that we have to solve the
problem (or a relaxation of it) to optimality to get an upper bound.  To do so
rigorously we have to use polynomials of high degree, and since~$K$ is
parameterized by a 3-variable polynomial, the variable matrices become
prohibitively large.


\subsection{Slice-positive functions and another hierarchy}

Let~$V$ be a measure space equipped with a measure~$\omega$.  A
function~$F \in L^2(V^{k+2})$ is \defi{slice-positive} if for almost
all~$v \in V^k$ the kernel $(x, y) \mapsto F(x, y, v)$ is positive
semidefinite.  If~$V$ is a compact Hausdorff space and~$\omega$ is a Radon
measure that is positive on open sets, then a continuous
function~$F\colon V^{k+2} \to \R$ is slice-positive if and only
if~$(x, y) \mapsto F(x, y, v)$ is a positive-semidefinite kernel for
every~$v \in V^k$.


For an integer~$r \geq 1$, let
\[
  \begin{split}
    \vzcone_r(V) = \{\, A \in \lsym(V) :\ &\rey{\Scal_{r+2}}(A \otimes
    \one^{\otimes r} - F) \geq 0\\
    &\text{for some slice-positive~$F\in L^2(V^{r+2})$}\,\}.
  \end{split}
\]

It is immediate that~$\kpcone_r(V) \subseteq \vzcone_r(V)$ for all~$r$.
Moreover,~$\vzcone_r(V) \subseteq \cop(V)$ for all~$r$.  Indeed,
take~$A \in \vzcone_r(V)$ and~$F \in L^2(V^{r+2})$ with
$\rey{\Scal_{r+2}}(A \otimes \one^{\otimes r} - F) \geq 0$.  Given a
nonnegative~$f\in L^2(V)$ with~$\langle \one, f\rangle > 0$ we have
\[
  \begin{split}
    0 &\leq \langle\rey{\Scal_{r+2}}(A \otimes \one^{\otimes r} - F),
    f^{\otimes(r+2)}\rangle\\
    &= \langle A \otimes \one^{\otimes r},
    \rey{\Scal_{r+2}} f^{\otimes(r+2)}\rangle - \langle F,
    \rey{\Scal_{r+2}} f^{\otimes(r+2)}\rangle\\
    &= \langle A, f \otimes f\rangle \langle \one, f\rangle^r - \langle F,
    f^{\otimes(r+2)}\rangle.
  \end{split}
\]
Now, since~$F$ is slice-positive,
\[
  \begin{split}
    \langle F, f^{\otimes(r+2)}\rangle &= \int_{V^r} \int_V \int_V F(x, y, v)
    f(x) f(y)\, d\omega(y) d\omega(x)\, f(v_1) \cdots f(v_r)\, d\omega(v)\\
    &\geq 0,
  \end{split}
\]
and we see that~$\langle A, f \otimes f\rangle \geq 0$, and so~$A$ is
copositive.

If $A \in \vzcone_r(V)$, that is, if
$\rey{\Scal_{r+2}}(A \otimes \one^{\otimes r} - F) \geq 0$ for some
slice-positive~$F$, then as in the proof of Theorem~\ref{thm:kpcone-hierarchy}
one shows that
$\rey{\Scal_{r+2}}(A \otimes \one^{\otimes r} \otimes \one - F \otimes \one)
\geq 0$, and so~$A \in \vzcone_{r+1}(V)$.  Hence
\[
  \vzcone_1(V) \subseteq \vzcone_2(V) \subseteq \cdots \subseteq \cop(V)
\]
is a hierarchy of inner approximations of~$\cop(V)$ stronger than
the~$\kpcone_r(V)$ hierarchy; it was proposed in the finite setting by Peña,
Vera, and Zuluaga~\cite{PenaVZ2007} and then extended to the infinite setting by
Kuryatnikova and Vera~\cite{Kuryatnikova2019}.

Given a locally independent graph~$G = (V, E)$, set
\[
  \begin{optprob}
    \xi_r(G)=\sup&\langle A, J\rangle\\
    &\int_V A(x, x)\, d\omega(x) = 1,\\
    &A(x, y) = 0\quad\text{if~$xy \in E$,}\\
    &\onerow{\text{$A \in \vzcone_r^*(V)$ is continuous and positive
        semidefinite.}}
  \end{optprob}
\]
This gives a hierarchy of bounds for the measurable independence number, namely
\[
  \xi_1(G) \geq \xi_2(G) \geq \cdots \geq \alpha_\omega(G),  
\]
that is at least as strong as the~$\gamma_r$ hierarchy
of~\S\ref{sec:cop-hierarchy}.  In particular, the same convergence results
of~\S\ref{sec:convergence} hold for this hierarchy.


\subsection{Invariant slice-positive functions}%
\label{sec:invariant-slice}

Let~$\Gamma$ be a group acting on a topological space~$V$.  For
each~$v \in V^k$, let~$\langle v\rangle$ be an arbitrary representative of the
orbit~$[v]$ of~$v$ under the action of~$\Gamma$.

Suppose~$F \in C(V^{k+2})$ is slice-positive and $\Gamma$-invariant.  Consider
the function~$K\colon (V^k / \Gamma) \times V^2 \to \R$ such that
\[
  K([v], x, y) = F(x, y, \langle v\rangle)
\]
and for every orbit~$[v]$ let~$K_{[v]}(x, y) = K([v], x, y)$; note that~$K$
depends on the choice of representatives.  The kernel~$K_{[v]}$ is continuous
and positive semidefinite for every orbit~$[v]$.  Moreover, since~$F$ is
$\Gamma$-invariant,~$K_{[v]}$ is invariant under the stabilizer
subgroup~$\stab{\Gamma}{\langle v\rangle}$ of~$\langle v\rangle$.

If we equip~$V^k / \Gamma$ with the quotient topology, then we may ask
whether~$K$ given above is continuous.  This is the case as long as the function
mapping each orbit~$[v]$ to its representative~$\langle v\rangle$ is continuous.

Conversely, say~$K\colon (V^k / \Gamma) \times V^2 \to \R$ is a continuous
function such that~$K_{[v]}$ is a positive-semidefinite
$\stab{\Gamma}{\langle v\rangle}$-invariant kernel for every orbit~$[v]$.  Then
we may define a function~$F\colon V^{k+2} \to \R$ by letting
\[
  F(x, y, v) = K([v], \sigma x, \sigma y)
\]
for any~$\sigma \in \Gamma$ such that~$\sigma v = \langle v\rangle$.  Note
that~$F$ is well defined: if~$\tau v = \langle v\rangle$,
then~$\sigma\tau^{-1} \langle v\rangle = \langle v\rangle$, and so from the
invariance of~$K_{[v]}$ we get
$K([v], \tau x, \tau y) = K([v], \sigma x, \sigma y)$.

By construction,~$F$ is slice-positive and $\Gamma$-invariant.  Indeed,
slice-positivity is clear.  As for invariance, given~$\sigma \in \Gamma$,
let~$\tau \in \Gamma$ be such that~$\tau\sigma v = \langle v\rangle$.  Then
$F(\sigma x, \sigma y, \sigma v) = K([v], \tau\sigma x, \tau\sigma y) = F(x, y,
v)$, as we wanted.  Moreover, if the~$\sigma$s can be chosen to vary
continuously, then~$F$ is also continuous.  More precisely, if there is a
continuous function~$s\colon V^k \to \Gamma$ such
that~$s(v) v = \langle v\rangle$ for every~$v \in V^k$, then~$F$ is continuous.

Our aim is to compute something like~$\xi_1(G_n)$,
where~$G_n = G(S^{n-1}, \{0\})$ is Witsenhausen's graph.  We see now how we can
use the Bachoc-Vallentin kernels~\eqref{eq:bachocv-kernel} to get a subset
of~$\vzcone_1(S^{n-1})$ and therefore a superset of~$\vzcone_1^*(S^{n-1})$,
obtaining in this way a relaxation of~$\xi_1(G_n)$.

If~$Z \in \vzcone_1(S^{n-1})$, then there is a continuous
slice-positive~$F\colon (S^{n-1})^3 \to \R$ such that
$\rey{\Scal_3}(Z \otimes \one - F) \geq 0$.  If~$Z$ is $\orto(n)$-invariant,
then we can assume that also~$F$ is $\orto(n)$-invariant, otherwise we simply
take~$\rey{\orto(n)}F$ instead, which is a continuous slice-positive function.

There is only one orbit for the action of the orthogonal group on the sphere; we
pick the north pole~$e \in S^{n-1}$ as its representative.  So the invariant
function~$F$ is continuous and slice-positive if and only if there is a
continuous, positive-semidefinite, and $\stab{\orto(n)}{e}$-invariant
kernel~$K\colon S^{n-1} \times S^{n-1} \to \R$ such that
$F(x, y, z) = K(Tx, Ty)$, where~$T$ is any orthogonal matrix such that~$Tz = e$.
So the value of~$F(x, y, z)$ depends only on~$e\cdot Tx = x\cdot z$,
$e\cdot Ty = y\cdot z$, and~$Tx\cdot Ty = x\cdot y$.

The Bachoc-Vallentin kernels~\eqref{eq:bachocv-kernel} are positive-semidefinite
and $\stab{\orto(n)}{e}$-invariant.  Fix an integer~$d \geq 1$.  Say~$Z$ is the
$\orto(n)$-invariant kernel given by
\begin{equation}%
  \label{eq:Z-def}
  Z(x, y) = \sum_{k=0}^{2d} f(k) P_k^n(x\cdot y).
\end{equation}
Let~$\overline{Y}_{k,d}^n = \rey{\Scal_3}Y_{k,d}^n$ be the matrix obtained
from~$Y_{k,d}^n$ of~\eqref{eq:Y-matrix} by averaging over all permutations
of~$(u, v, t)$.

If there are positive-semidefinite matrices~$F_k \in \R^{(d-k+1)\times(d-k+1)}$
for~$k = 0$, \dots,~$d$ such that
\begin{equation}%
  \label{eq:strict-pos-constraint}
  \sum_{k=0}^{2d} f(k) (1/3)(P_k^n(u) + P_k^n(v) + P_k^n(t)) - \sum_{k=0}^d
  \langle F_k, \overline{Y}_{k,d}^n(u, v, t)\rangle \geq 0
\end{equation}
for all~$(u, v, t) \in \Delta = \{\, (x\cdot z, y\cdot z, x\cdot y) : \text{$x$,
  $y$, $z \in S^{n-1}$}\,\}$, then~$Z \in \vzcone_1(S^{n-1})$.

Indeed, for~$x$, $y$, $z \in S^{n-1}$ with~$u = x\cdot z$, $v = y\cdot z$,
and~$t = x\cdot y$ we have
\[
  \rey{\Scal_3}(Z \otimes \one)(x, y, z) = \sum_{k=0}^{2d} f(k) (1/3)(P_k^n(u) +
  P_k^n(v) + P_k^n(t)).
\]
The function~$F$ given by
\[
  F(x, y, z) = \sum_{k=0}^d \langle F_k, Y_{k,d}^n(u, v, t)\rangle
\]
is slice-positive and continuous and  
\[
  (\rey{\Scal_3} F)(x, y, z) = \sum_{k=0}^d \langle F_k, \overline{Y}_{k,d}^n(u,
  v, t)\rangle.
\]
Putting it all together, we see that~$Z \in \vzcone_1(S^{n-1})$.

Note that the left side of~\eqref{eq:strict-pos-constraint} is a
polynomial~$p \in \R[u, v, t]$ of degree at most~$2d$ that should be nonnegative
on~$\Delta$.  The polynomial~$p$ is also invariant under the action of~$\Scal_3$
which permutes the variables.  The domain~$\Delta$ is also invariant
under~$\Scal_3$; it is a semi-algebraic set:
$\Delta = \{\, (u, v, t) \in \R^3 : \text{$g_i(u, v, t) \geq 0$ for~$i = 1$,
  \dots,~$4$}\,\}$, where~$g(w) = 1-w^2$ and
\[
\begin{aligned}
  g_1 &= g(u) + g(v) + g(t),&g_2 &= g(u) g(v) + g(u) g(t) + g(v) g(t),\\
  g_3 &= g(u) g(v) g(t),&g_4 &= 1 + 2uvt - u^2 - v^2 - t^2.
\end{aligned}
\]
(See Lemma~3.1 in Machado and Oliveira~\cite{MachadoO2018}.)  So if there are
sums-of-squares polynomials~$q_0$, \dots,~$q_4 \in \R[u, v, t]$ such that
\begin{equation}%
  \label{eq:p-sos}
  p = q_0 + g_1 q_1 + g_2 q_2 + g_3 q_3 + g_4 q_4,
\end{equation}
then~$p$ is nonnegative on~$\Delta$.  Moreover, since~$p$ and the~$g_i$ are all
invariant under~$\Scal_3$, we may assume without loss of generality that
the~$q_i$ are also invariant.

Let~$V_r$ be the matrix indexed by all monomials on~$u$, $v$, and~$t$ of degree
at most~$\lfloor r/2\rfloor$ such that~$(V_r)(m_1, m_2) = m_1 m_2$ for any two
such monomials.  Note that every entry of~$V_r$ is a polynomial of degree at
most~$r$.  A polynomial~$q$ of degree~$2k$ is a sum of squares if and only if
there is a positive-semidefinite matrix~$Q$ such
that~$q = \langle Q, V_{2k}\rangle$.

Using this equivalence and by restricting the degrees of the polynomials~$q_i$
appearing in~\eqref{eq:p-sos}, we can write a sufficient condition for~$p$ to be
nonnegative on~$\Delta$ in terms of positive-semidefinite matrices.  Namely, if
there are positive-semidefinite matrices~$F_k$ and~$Q_i$ such that
\begin{multline}%
  \label{eq:Z-sos}
  \sum_{k=0}^{2d} f(k) (1/3)(P_k^n(u) + P_k^n(v) + P_k^n(t)) - \sum_{k=0}^d
  \langle F_k, \overline{Y}_{k,d}^n(u, v, t)\rangle\\
  = \langle Q_0, V_{2d}\rangle + \langle Q_1, g_1 V_{2d-2}\rangle + \langle Q_2,
  g_2 V_{2d-4}\rangle\\
  + \langle Q_3, g_3 V_{2d-6}\rangle + \langle Q_4, g_4  V_{2d-3}\rangle,
\end{multline}
then~$Z$ given in~\eqref{eq:Z-def} is in~$\vzcone_1(S^{n-1})$.  This leads us to
the definition of the following cone for every fixed~$d$:
\begin{equation}%
  \label{eq:Q-cone-d}
  \begin{split}
    \vzcone_1^d = \{\, (f(0), \ldots, f(2d), 0, \ldots) \in \R^\N :\ &
    \text{there are positive-semidefinite matrices}\\
    &\text{$F_k$ and~$Q_i$ such that~\eqref{eq:Z-sos} holds}\,\}.
  \end{split}
\end{equation}

We were careful to describe the domain~$\Delta$ with invariant polynomials so we
could assume that all polynomials~$q_i$ are likewise invariant.  This can be
used to simplify~\eqref{eq:Z-sos} so we can work with block-diagonal
positive-semidefinite matrices~$Q_i$.  The original idea was presented by
Gatermann and Parrilo~\cite{GatermannP2004}; see also Machado and
Oliveira~\cite{MachadoO2018} and Leijenhorst and De Laat~\cite[\S4]{LaatL2022}
for more recent descriptions of the method and an application to this exact
situation.  This use of symmetry to reduce the problem size is essential to
reach high degrees.


\subsection{Witsenhausen's problem: setup}

Our goal is to compute an upper bound for Witsenhausen's
number~$\alpha_n = \alpha_\omega(G_n) / \omega_n$,
where~$G_n = G(S^{n-1}, \{0\})$ is Witsenhausen's graph.  We do so by combining
the cone~$\vzcone_1^d$ with constraints from the Boolean quadratic polytope,
which for a finite set~$V$ is defined as
\[
  \bqp(V) = \conv\{\, x x^\tp : x \in \{0,1\}^V\,\}.
\]
Such constraints were used before by DeCorte, Oliveira, and
Vallentin~\cite{DeCorteOV2022}.  The bound we compute is thus a hybrid
between~$\xi_1(G_n)$ and the bound by DeCorte, Oliveira, and Vallentin.

Given a measurable independent set~$I \subseteq S^{n-1}$ of~$G_n$, let
$A = \rey{\orto(n)}(\chi_I \otimes \chi_I)$.  Then:
\begin{enumerate}
\item[(i)] $A$ is an $\orto(n)$-invariant continuous kernel (by
  Lemma~\ref{lem:tensor-reynolds});

\item[(ii)] $A(x, y) = 0$ for all orthogonal~$x$, $y \in S^{n-1}$;

\item[(iii)] $A$ is positive semidefinite and~$A \in \vzcone_r^*(S^{n-1})$ for
  all~$r \geq 1$;

\item[(iv)] $\bigl(A(x, y)\bigr)_{x,y \in U} \in \bqp(U)$ for every finite~$U
  \subseteq S^{n-1}$;

\item[(v)] $\int_{S^{n-1}} A(x, x)\, d\omega(x) = \omega(I)$
  and~$\langle A, J\rangle = \omega(I)^2$.
\end{enumerate}

We use Schoenberg's theorem to express~$A$ in terms of Jacobi polynomials as
in~\eqref{eq:schoenberg}, so
\[
  A(x, y) = \sum_{k=0}^\infty a(k) P_k^n(x\cdot y)
\]
for some sequence~$a \geq 0$.  Recall that we normalize the polynomials
so~$P_k^n(1) = 1$; together with the addition
formula~(\S\ref{sec:dist-convergence}) this gives
\begin{equation}%
  \label{eq:witsenhausen-norm}
  \int_{S^{n-1}} A(x, x)\, d\omega(x) = \omega_n \sum_{k=0}^\infty
  a(k)\qquad\text{and}\qquad \langle A, J\rangle = \omega_n^2 a(0),
\end{equation}
where~$\omega_n = \omega(S^{n-1})$.

Let~$U \subseteq S^{n-1}$ be a finite set and let~$L\in \R^{U\times U}$
and~$\beta \in \R$ be such that $\langle L, X\rangle \leq \beta$ for
all~$X \in \bqp(U)$.  Then defining~$r\colon \N \to \R$ by
\begin{equation}%
  \label{eq:r-from-L}
  r(k) = \sum_{x, y \in U} L(x, y) P_k^n(x\cdot y)
\end{equation}
we have
\begin{equation}%
  \label{eq:bqp-ineq}
  \sum_{k=0}^\infty a(k) r(k) \leq \beta.
\end{equation}
We call~$(r, \beta)$ a \defi{$\bqp(S^{n-1})$-inequality} and we call the points
in~$U$ the \defi{support points} of the inequality.

Finally, if~$(f(0), \ldots, f(2d), 0, \ldots) \in \vzcone_1^d$,
where~$\vzcone_1^d$ is defined in~\eqref{eq:Q-cone-d}, and if $Z(x, y) =
\sum_{k=0}^{2d} f(k) P_k^n(x\cdot y)$, then~$Z \in \vzcone_1(S^{n-1})$ and
from~\eqref{eq:schoenberg-product} we get
\[
  \sum_{k=0}^{2d} (a(k) / h_k^n) f(k) = \omega_n^{-2} \langle A, Z\rangle \geq
  0,
\]
that is,~$k \mapsto a(k) / h_k^n$ belongs to~$(\vzcone_1^d)^*$.

Let~$(r_1, \beta_1)$, \dots,~$(r_N, \beta_N)$ be any
$\bqp(S^{n-1})$-inequalities and fix some integer~$d \geq 1$.  Put together, our
developments lead us to the following optimization problem, whose optimal value
gives an upper bound for~$\alpha_n$:
\begin{equation}%
  \label{opt:wit-primal}
  \begin{optprob}
    \sup&\sum_{k=0}^\infty a(k)\\[2pt]
    &\sum_{k=0}^\infty a(k) P_k^n(0) = 0,\\[2pt]
    &\sum_{k=0}^\infty a(k) r_i(k) \leq \beta_i\quad\text{for~$i = 1$,
      \dots,~$N$,}\\[1ex]
    &\begin{pmatrix}
      1&\omega_n\sum_{k=0}^\infty a(k)\\[1ex]
      \omega_n\sum_{k=0}^\infty a(k)&\omega_n^2 a(0)
    \end{pmatrix}\text{ is positive semidefinite},\\[1.5eM]
    &\text{$a \geq 0$ and~$k\mapsto a(k) / h_k^n \in (\vzcone_1^d)^*$}.
  \end{optprob}
\end{equation}

The~$2\times 2$ matrix comes from~\eqref{eq:witsenhausen-norm} and~(v) and
is used to normalize the problem.  Note moreover that the objective function is
divided by~$\omega_n$, ensuring that we get a bound
for~$\alpha_n = \alpha_\omega(G_n) / \omega_n$.  Finally, our problem has
infinitely many variables~$a$, but only the first~$2d+1$ of them appear in the
cone constraint with~$(\vzcone_1^d)^*$.  Contrast this with the situation of
the $3$-point bound from~\S\ref{sec:3-pt-bound}.

The dual of this problem is:
\begin{equation}%
  \label{opt:wit-dual}
  \begin{optprob}
    \inf&z_{11} + \sum_{i=1}^N y_i \beta_i\\[2pt]
    &\onerow{\lambda + \sum_{i=1}^\infty y_i r_i(0) - \omega_n z_{12} -
      \omega_n^2
      z_{22} - f(0) \geq 1,}\\[2pt]
    &\lambda P_k^n(0) + \sum_{i=1}^N y_i r_i(k) - \omega_n z_{12} - f(k) \geq
    1&\text{for
      all~$k = 1$, \dots,~$2d$,}\\[2pt]
    &\lambda P_k^n(0) + \sum_{i=1}^N y_i r_i(k) - \omega_n z_{12} \geq
    1&\text{for all~$k
      \geq 2d+1$},\\[1ex]
    &\onerow{\begin{pmatrix}
        z_{11}&z_{12}/2\\
        z_{12}/2&z_{22}
      \end{pmatrix}\text{ is positive semidefinite},}\\[1eM]
    &\onerow{\text{$y \geq 0$ and $k\mapsto h_k^n f(k) \in \vzcone_1^d$.}}
  \end{optprob}
\end{equation}
It is easy to show that the objective value of any feasible solution
of~\eqref{opt:wit-dual} is greater or equal than the objective value of any
feasible solution of~\eqref{opt:wit-primal}.  So any feasible solution of the
dual gives an upper bound for~$\alpha_n$.  Note moreover that the constraint
``$k\mapsto h_k^n f(k) \in \vzcone_1^d$'' is expressed in terms
of~\eqref{eq:Z-sos}, namely we require there to be positive-semidefinite
matrices~$F_k$ and~$Q_i$ such that
\begin{multline}%
  \label{eq:Z-sos-final}
  \sum_{k=0}^{2d} f(k) (h_k^n/3)(P_k^n(u) + P_k^n(v) + P_k^n(t)) - \sum_{k=0}^d
  \langle F_k, \overline{Y}_{k,d}^n(u, v, t)\rangle\\
  - \langle Q_0, V_{2d}\rangle - \langle Q_1, g_1 V_{2d-2}\rangle - \langle Q_2,
  g_2 V_{2d-4}\rangle\\
  - \langle Q_3, g_3 V_{2d-6}\rangle - \langle Q_4, g_4  V_{2d-3}\rangle = 0.
\end{multline}
So~\eqref{opt:wit-dual} is a semidefinite programming problem with finitely many
variables but infinitely many constraints.


\subsection{Witsenhausen's problem: solution and verification}

To solve~\eqref{opt:wit-dual} we use the \texttt{ClusteredLowRankSolver} of
Leijenhorst and De Laat~\cite{LaatL2022}; the input for the solver is generated by
a Julia program.  The program and all data files used are available in the
Harvard Dataverse repository~\cite{Oliveira2023}.

To find good $\bqp(S^{n-1})$-inequalities, we use a separation heuristic
described by DeCorte, Oliveira, and Vallentin~\cite{DeCorteOV2022}.  The
inequalities used are also included in the repository and need not be
recomputed.

Table~\ref{tab:detailed} contains a detailed account of all the bounds computed
from~\eqref{opt:wit-dual}.  Solving the problem for~$d = 14$ and~$18$ takes time
and memory, and so files with the corresponding solutions are also available in
the repository.

\begin{table}[t]
  \begin{center}
    \small
    \begin{tabular}{ccccccc}
      &                  &\multicolumn{2}{c}{\textsl{Previous upper
                           bound}}&\multicolumn{3}{c}{\textsl{New upper bound}}\\
      $n$&\textsl{Lower bound}&\textsl{Simple}&\textsl{Best}&$d$&\textsl{No
                                                                  BQP}&\textsl{With
                                                                        BQP}\\
      \rowcolor{gray!20}
      3 & 0.2928\ldots  & 0.3333\ldots  & 0.30153 & 6 & 0.316925 & 0.300708\\
      \rowcolor{gray!20}
      &  &  &  & 10 & 0.309298 & 0.298998\\
      \rowcolor{gray!20}
      &  &  &  & 14 & 0.305627 & 0.298341\\
      \rowcolor{gray!20}
      &  &  &  & 18 & 0.303294 & 0.297742\\
      4 & 0.1816\ldots  & 0.25  & 0.21676 & 6 & 0.223633 & 0.207617\\
      &  &  &  & 10 & 0.211825 & 0.199402\\
      &  &  &  & 14 & 0.205479 & 0.196162\\
      &  &  &  & 18 & 0.201445 & 0.194297\\
      \rowcolor{gray!20}
      5 & 0.1161\ldots  & 0.2  & 0.16765  & 6 & 0.167357 & 0.151541\\
      \rowcolor{gray!20}
      &  &  &  & 10 & 0.153819 & 0.141539\\
      \rowcolor{gray!20}
      &  &  &  & 14 & 0.146612 & 0.137142\\
      \rowcolor{gray!20}
      &  &  &  & 18 & 0.142349 &  0.134588\\
      6 & 0.0755\ldots  & 0.1666\ldots  & 0.13382  & 6 & 0.130829 & 0.116599\\
      &  &  &  & 10 & 0.116509 & 0.105200\\
      &  &  &  & 14 & 0.109989 & 0.100374\\
      &  &  &  & 18 & 0.106727 & 0.098095\\
      \rowcolor{gray!20}
      7 & 0.0498\ldots  & 0.1428\ldots  & 0.11739 & 6 & 0.106059 & 0.093031\\
      \rowcolor{gray!20}
      &  &  &  & 10 & 0.091477 & 0.081221\\
      \rowcolor{gray!20}
      &  &  &  & 14 & 0.086656 & 0.077278\\
      \rowcolor{gray!20}
      &  &  &  & 18 & 0.084787 & 0.075751\\
      8 & 0.0331\ldots  & 0.125 & 0.09981 & 6 & 0.088750 & 0.076801\\
      &  &  &  & 10 & 0.074309 & 0.064919\\
      &  &  &  & 14 & 0.071676 & 0.063287\\
      &  &  &  & 18 & 0.070607 & 0.061178\\
    \end{tabular}
  \end{center}
  \bigskip
  
  \caption{Lower and upper bounds for Witsenhausen's parameter~$\alpha_n$.  The
    lower bound is given by the double-cap conjecture.  The simple upper bound
    was given by Witsenhausen~\cite{Witsenhausen1974} and is just~$1/n$; the
    best previous upper bounds are by DeCorte, Oliveira, and
    Vallentin~\cite{DeCorteOV2022}.  The table gives upper bounds obtained from
    solving~\eqref{opt:wit-dual} with and without $\bqp(S^{n-1})$-inequalities
    and for several values of~$d$.}%
  \label{tab:detailed}
\end{table}

Since~\eqref{opt:wit-dual} has infinitely many linear constraints, to solve it
we select some finite set~$S \subseteq \{0, 1, \ldots\}$ and consider only the
constraints for~$k \in S$.  After a solution is found it has to be verified,
that is, we need to check that all constraints are indeed satisfied.

Let~$(\lambda, y, z, f, F, Q)$ be a candidate solution to~\eqref{opt:wit-dual},
where~$F$ and~$Q$ are as in~\eqref{eq:Z-sos-final}, returned by the solver.  The
first step is to certify ourselves that~$f$, $F$, and~$Q$ indeed
satisfy~\eqref{eq:Z-sos-final}.

This is certainly not true: the solver works with floating-point arithmetic, and
so~\eqref{eq:Z-sos-final} will not hold.  Rather, the left side
of~\eqref{eq:Z-sos-final} will be a polynomial with coefficients close to~0.
Since the \texttt{ClusteredLowRankSolver} uses high-precision floating-point
arithmetic, the coefficients will be quite small; let~$\eta$ be the largest
absolute value of any such coefficient.

It is always possible to perturb the matrices~$Q_i$ in order to satisfy the
constraint; the order of the perturbation depends on~$\eta$.  We want to do so
and keep the~$Q_i$ positive semidefinite, and as long as the minimum eigenvalues
of the matrices~$Q_i$ are large enough compared to~$\eta$, this is always
possible.  The solutions stored in the repository have large minimum
eigenvalues, several orders of magnitude larger than~$\eta$, and so this
perturbation of the~$Q_i$ can always be carried out.  Note that we do not have
to actually change the~$Q_i$; it suffices to know that such a perturbation is
possible, since then we know we can get a feasible solution if we want to.  This
procedure was used before by De Laat, Oliveira, and Vallentin~\cite{LaatOV2014}.

Checking that the linear constraints for all~$k \geq 0$ are satisfied is more
difficult; we use the approach outlined in DeCorte, Oliveira, and
Vallentin~\cite{DeCorteOV2022}.

The idea is as follows.  Let~$\lhs(k)$ be the left side of the~$k$th linear
constraint in~\eqref{opt:wit-dual} and
write~$\lhs(\infty) = \lim_{k \to \infty} \lhs(k)$; we will see that this limit
exists.  Then we follow the steps:
\begin{enumerate}
\item[(i)] As long as~$z_{22} > 0$, we can change~$z_{11}$ and~$z_{12}$ to
  get~$\lhs(\infty) \geq 1 + \eta$ for some~$\eta > 0$.  The more we
  change~$z_{12}$, the more we have to change~$z_{11}$, and the worse the bound
  gets.

\item[(ii)] Next, for some~$\epsilon < \eta$ we find a~$k_0$ such
  that~$|\lhs(k) - \lhs(\infty)| \leq \epsilon$ for all~$k \geq k_0$.
  Then~$\lhs(k) \geq \lhs(\infty) - \epsilon \geq 1 + \eta - \epsilon > 1$, and
  so all constraints are satisfied for~$k \geq k_0$.

\item[(iii)] Finally, we check the constraints for~$k = 0$, \dots,~$k_0$, and by
  changing~$z$ again we can make all these constraints satisfied.
\end{enumerate}

If we choose our initial sample~$S$ well, then all constraints will be almost
satisfied, and we will not have to change~$z$ too much in order to get a
feasible solution.  This is the procedure implemented by the
\texttt{fix\_linear\_constraints} function in the Julia program in the
repository~\cite{Oliveira2023}.

Let us see the details of the procedure.  The asymptotic formula for the Jacobi
polynomials~\cite[Theorem~8.21.8]{Szego1975} implies that~$P_k^n(t) \to 0$
as~$k \to \infty$ for all~$t \in (-1, 1)$.  We make sure that all the
$\bqp(S^{n-1})$-inequalities~\eqref{eq:bqp-ineq} we use have support points~$U$
such that distinct~$x$, $y \in U$ have inner product~$x\cdot y$ bounded away
from~$\pm 1$.  So if~$r$ is given as in~\eqref{eq:r-from-L}, then
\[
  r(\infty) = \lim_{k\to\infty} r(k) = \trace L
\]
and
\begin{equation}%
  \label{eq:rk-infty-diff}
  |r(k) - r(\infty)| \leq \sum_{\substack{x,y \in U\\x \neq y}} |L(x, y)|
  |P_k^n(x\cdot y)|.
\end{equation}

We also have
\[
  \lhs(\infty) = \sum_{i=1}^N y_i r_i(\infty) - \omega_n z_{12}.
\]
Given~$\epsilon > 0$ we want to get~$k_0$ as in~(ii).  Note that
\[
  |\lhs(k) - \lhs(\infty)| \leq |\lambda| |P_k^n(0)| + \sum_{i=1}^N y_i |r_i(k)
  - r_i(\infty)|.
\]
Fix~$k_0$.  Using~\eqref{eq:rk-infty-diff}, we see that to find an upper bound
for the left side above for all~$k \geq k_0$, it suffices to find for
all~$k \geq k_0$ an upper bound on~$|P_k^n(t)|$ for~$t = 0$ and all
other~$t \in (-1, 1)$ that occur as inner products between distinct support
points of the $\bqp(S^{n-1})$-inequalities we use.

To do so rigorously, we use an integral representation for the ultraspherical
polynomials due to Gegenbauer (take~$\lambda = (n - 2)/2$ in Theorem~6.7.4 from
Andrews, Askey, and Roy~\cite{AndrewsAR1999}):
\[
  P_k^n(\cos\theta) = R(n)^{-1} \int_0^\pi F(\phi)^k
  \sin^{n-3}\phi\, d\phi,
\]
where
\[
  F(\phi) = \cos\theta + i\sin\theta\cos\phi\qquad\text{and}\qquad
  R(n) = \int_0^\pi \sin^{n-3} \phi\, d\phi.
\]

Note that~$|F(\phi)|^2 = \cos^2\theta + \sin^2\theta \cos^2\phi$, so
\[
  |P_k^n(\cos\theta)| \leq R(n)^{-1} \int_0^\pi (\cos^2\theta + \sin^2\theta
  \cos^2\phi)^{k/2} \sin^{n-3}\phi\, d\phi.
\]
The right side is decreasing in~$k$, and we can estimate the integrals rigorously
using interval arithmetic.

For~(iii) we need to compute~$\lhs(k)$ for all~$k \leq k_0$.  We would like to
do this rigorously, using for instance interval arithmetic.  The most
time-consuming step here is to compute the~$r_i$ functions.  In practice, this
step involves evaluating the polynomials~$P_k^n$ for values of~$k$ that can
exceed $100{,}000$.

The Jacobi polynomials~$P_k^n$ are given by a simple recurrence, namely
\[
  P_k^n(u) = a_k^n(u) P_{k-1}^n(u) + b_k^n P_{k-2}^n(u)
\]
for~$k \geq 2$ with~$P_1^n(u) = u$ and~$P_0^n(u) = 1$, where
\[
  a_k^n(u) = \frac{2k + 2\alpha - 1}{k + 2\alpha} u\qquad\text{and}\qquad b_k =
  -\frac{k-1}{k + 2\alpha}
\]
with~$\alpha = (n-3) / 2$.  This recurrence comes from formula~(4.5.1) in
Szegö~\cite{Szego1975}, adapted to our normalization of~$P_k^n(1) = 1$.

The recurrence is very stable: even using double-precision floating-point
arithmetic it is possible to accurately evaluate the polynomial for very high
degrees for points in~$[-1, 1]$.  If we use this recurrence with interval
arithmetic though, the error estimation quickly gets out of hand: if
$\lim_{k\to\infty} a_k(u) > 1$, then the error bound grows exponentially.

Using interval arithmetic then requires very high precision and is very slow,
though not prohibitively so.  In any case, we can trust floating-point
computations.  Using the recurrence amounts to solving by backward substitution
a linear system with a triangular matrix whose entries are the numbers
$a_k(u)$,~$b_k$, and~$1$, and this matrix is well conditioned, so the error we
make in solving the system is very small.  The error was analyzed for instance
by Barrio~\cite{Barrio2002}.  The Julia program that performs the verification
uses high-precision floating-point arithmetic.


\section*{Acknowledgments}

We would like to thank David de Laat, Nando Leijenhorst, Fabrício Caluza
Machado, and Willem de Muinck Keizer for fruitful discussions.  David de Laat
and Nando Leijenhorst also gave some much needed technical support regarding the
\texttt{ClusteredLowRankSolver}.  The optimization problems were solved in a
computational cluster at TU Delft maintained by Joffrey Wallaart.



\appendix

\section{Pólya's theorem and a proof of Theorem~\ref{thm:olga-juan}}%
\label{apx:cop-hierarchy}

Let~$V$ be a finite set.  A \defi{symmetric $k$-tensor} is a function~$T\colon
V^k \to \R$ invariant under the action of~$\Scal_k$, that is, invariant under
permutations of the variables.  Symmetric tensors correspond to homogeneous
polynomials: if~$[n] = \{1, \ldots, n\}$ and if~$T\colon [n]^k \to \R$ is a
symmetric $k$-tensor, then the polynomial
\begin{equation}%
  \label{eq:tensor-poly}
  p(w) = \langle T, w^{\otimes k}\rangle,
\end{equation}
where~$w \in \R^n$ and where~$\langle \cdot, \cdot\rangle$ is the Euclidean
inner product in the space of symmetric $k$-tensors, is an $n$-variable
homogeneous polynomial of degree~$k$.  Conversely, if~$p$ is an $n$-variable
homogeneous polynomial of degree~$k$, then there is a unique symmetric
$k$-tensor~$T$ such that~\eqref{eq:tensor-poly} holds.

Let~$p \in \R[w]$, where~$w = (w_1, \ldots, w_n)$, be a homogeneous polynomial.
If for some integer~$r \geq 0$ all the coefficients of the polynomial
\begin{equation}%
  \label{eq:polya}
  (w_1 + \cdots + w_n)^r p(w) = (\one^\tp w)^r p(w)
\end{equation}
are nonnegative, then~$p(w) \geq 0$ for all~$w \geq 0$.  Pólya's theorem gives a
converse: if~$p(w) > 0$ for all nonzero~$w \geq 0$, then there is~$r$ such that
all coefficients of~\eqref{eq:polya} are nonnegative.

Let~$V$ be a finite set and say~$A\colon V^2 \to \R$ is a matrix in the
algebraic interior of the copositive cone.  This means in particular that the
homogeneous polynomial~$p(w) = w^\tp A w$ is positive for all
nonzero~$w \geq 0$.  By Pólya's theorem, there is~$r$ such that all coefficients
of the polynomial~$(\one^\tp w)^r (w^\tp A w)$ are nonnegative.  Now note that
\[
  (\one^\tp w)^r (w^\tp A w) = \langle A \otimes \one^{\otimes r},
  w^{\otimes(r+2)}\rangle = \langle \rey{\Scal_{r+2}}(A \otimes \one^{\otimes
    r}), w^{\otimes(r+2)}\rangle,
\]
so the coefficients of our polynomial are given by the symmetric $(r+2)$-tensor
$\rey{\Scal_{r+2}}(A \otimes \one^{\otimes r})$, which is therefore nonnegative.
It follows that~$A \in \kpcone_r(V)$, proving Theorem~\ref{thm:olga-juan} for
finite~$V$.

For a function~$T\colon V^k \to \R$ and any set~$U \subseteq V$, we denote
by~$T[U]$ the restriction of~$T$ to~$U^k$.  To prove Theorem~\ref{thm:olga-juan}
for infinite~$V$, we use the following infinite-dimensional version of Pólya's
theorem, which is interesting in itself.

\begin{theorem}%
  \label{thm:polya-gen}
  Let~$V$ be a compact Hausdorff space equipped with a Radon measure~$\omega$
  that is positive on open sets.  Given a symmetric~$T \in C(V^k)$, let
  $M = \max\{\, |T(v)| : v \in V^k\,\}$
  and~$\lambda = \inf\{\, \langle T, f^{\otimes k}\rangle :
  \text{$f \in L^2(V)$, $f \geq 0$, and $\langle \one, f\rangle = 1$}\,\}$.
  If~$\lambda > 0$, then for every~$r > k(k-1)M/(2\lambda) - k$ we
  have~$\rey{\Scal_{r+k}}(T \otimes \one^{\otimes r}) \geq 0$.
\end{theorem}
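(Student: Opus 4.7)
The natural approach is to identify $\rey{\Scal_{r+k}}(T\otimes\one^{\otimes r})$ as a subset-averaged version of $T$ and compare it to the tuple-averaged integral $\int T\,d\mu^{\otimes k}$ against the empirical measure $\mu$. Setting $n=r+k$ and unpacking the Reynolds operator, the fact that $(T\otimes\one^{\otimes r})(y_1,\ldots,y_n)=T(y_1,\ldots,y_k)$ together with the symmetry of $T$ gives by counting
\[
\rey{\Scal_n}(T\otimes\one^{\otimes r})(x_1,\ldots,x_n)=\binom{n}{k}^{-1}\sum_{S\in\binom{[n]}{k}} T(x_S),
\]
where $T(x_S)$ is the evaluation of $T$ at the entries indexed by $S$ (well defined by symmetry). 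Call the right-hand side $A(x)$; the goal is to show $A(x)\geq 0$ for every $x\in V^n$.

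First I would extend the lower bound defining $\lambda$ from $L^2$-probability densities to all Borel probability measures on $V$. The point is that, since $\omega$ is positive on open sets, every point mass $\delta_y$ is a weak limit of normalized indicators $\omega(U)^{-1}\chi_U$ over shrinking open neighborhoods $U\ni y$; convex combinations then show that every empirical measure $\mu_x=n^{-1}\sum_{i=1}^n \delta_{x_i}$ is a weak limit of $L^2$-probability densities $f_\epsilon\in L^2(V)$. Continuity of $T$ on the compact space $V^k$ implies that $\mu\mapsto \int T\,d\mu^{\otimes k}$ is continuous in the weak topology (expand $T$ via Stone--Weierstrass as a uniform limit of finite sums of tensor products), so
\[
B(x)\;:=\;\int_{V^k} T\,d\mu_x^{\otimes k}\;=\;\frac{1}{n^k}\sum_{i_1,\ldots,i_k=1}^n T(x_{i_1},\ldots,x_{i_k})\;\geq\;\lambda.
\]

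Next I would split the tuple sum according to whether the indices are distinct. Grouping distinct tuples by the $k$-subset they form and using symmetry of $T$ gives
\[
n^k B(x)\;=\;\frac{n!}{(n-k)!}\,A(x)+R(x),
\]
where $R(x)$ is the contribution from tuples with at least one repeated index. Since each term of $R(x)$ lies in $[-M,M]$ and the number of such tuples is $n^k-n!/(n-k)!$, the elementary inequality $\prod_{i=0}^{k-1}(1-i/n)\geq 1-k(k-1)/(2n)$ (by $\prod(1-a_i)\geq 1-\sum a_i$ for $a_i\in[0,1]$) gives $|R(x)|\leq n^{k-1}k(k-1)M/2$. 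Substituting the bound $B(x)\geq\lambda$ and rearranging yields
\[
A(x)\;\geq\;\frac{(n-k)!}{n!}\Bigl(n^k\lambda-\tfrac{1}{2}n^{k-1}k(k-1)M\Bigr),
\]
which is strictly positive exactly when $n>k(k-1)M/(2\lambda)$, i.e.\ when $r>k(k-1)M/(2\lambda)-k$.

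The only nontrivial conceptual step is the weak approximation extending $\langle T,f^{\otimes k}\rangle\geq\lambda$ from $L^2$-densities to empirical measures; the remainder is careful combinatorial bookkeeping. I expect this extension to be the main obstacle, but it should go through cleanly given that $V$ is compact Hausdorff, $\omega$ is positive on open sets, and $T$ is continuous.
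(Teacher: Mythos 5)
Your proof is correct, and it takes a genuinely different route from the paper's. The paper proves that $\langle T[U], w^{\otimes k}\rangle \geq \lambda$ for every finite $U \subseteq V$ and every simplex weight $w$ (using exactly the same approximation device you use: replace point masses by normalized indicators $\omega(P(x))^{-1}\chi_{P(x)}$ of small sets of positive measure, which exist because $\omega$ is positive on open sets and $T$ is continuous), and then invokes the quantitative Pólya theorem of Powers and Reznick on the finite restriction $T[U]$ to conclude that all entries of $\rey{\Scal_{r+k}}(T[U]\otimes\one^{\otimes r})$ are nonnegative, finishing by letting $U$ range over all finite subsets. You instead inline the finite-dimensional step: identifying the Reynolds average pointwise with the $k$-subset average $\binom{n}{k}^{-1}\sum_S T(x_S)$, comparing it with the tuple average $\int T\, d\mu_x^{\otimes k} \geq \lambda$ for the empirical measure $\mu_x$, and bounding the repeated-index correction by $n^{k-1}k(k-1)M/2$, which reproduces exactly the threshold $r > k(k-1)M/(2\lambda) - k$ (and in fact yields strict positivity pointwise on $V^{r+k}$). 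What each buys: your argument is self-contained and elementary, needing only the approximation of $\mu_x$ by $L^2$ densities (your only analytic step, and the one place where positivity of $\omega$ on open sets, compactness, and continuity of $T$ enter — essentially the paper's same trick, specialized to uniform weights), whereas the paper's argument is shorter modulo the citation and cleanly separates the measure-theoretic approximation from the purely polynomial Powers--Reznick statement. The only cosmetic caveat in your write-up is that "shrinking neighborhoods" should be read as a net (or handled via uniform continuity of $T$ on the compact $V^k$, choosing the neighborhoods so that $T$ varies by at most $\epsilon$ on products), since the neighborhood filter need not have a countable base; this does not affect correctness.
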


\begin{proof}
Let~$U \subseteq V$ be any finite nonempty set.  Given a
function~$w\colon U \to \Rplus$ with~$\one^\tp w = 1$, fix~$\epsilon > 0$ and
let~$P(x)$ for~$x \in U$ be disjoint sets of positive measure such
that~$x \in P(x)$ for all~$x \in U$ and such that~$T$ varies at most~$\epsilon$
in any set of the form~$P(x_1) \times \cdots \times P(x_k)$ for~$x_1$,
\dots,~$x_k \in U$.  Such sets always exist: see for instance Theorem~4.4 in
DeCorte, Oliveira, and Vallentin~\cite{DeCorteOV2022}, where this is shown
for~$k = 2$, though the proof immediately generalizes for~$k \geq 3$; the proof
of Lemma~\ref{lem:nonneg-tensor} uses the same idea.

Set~$f = \sum_{x \in U} w(x) \omega(P(x))^{-1} \chi_{P(x)}$; note
that~$f \geq 0$ and~$\langle\one, f\rangle = 1$.  For~$x_1$, \dots,~$x_k \in U$,
write~$P(x_1, \dots, x_k) = P(x_1) \times \cdots \times P(x_k)$.  We have
\[
  \begin{split}
    \langle T, f^{\otimes k}\rangle &= \sum_{x_1, \ldots, x_k \in U}
    \biggl(\prod_{i=1}^k w(x_i) \omega(P(x_i))^{-1}\biggr) \int_{P(x_1, \ldots, x_k)} T(y_1,
    \ldots, y_k)\, d\omega(y)\\
    &\leq \sum_{x_1, \ldots, x_k \in U}
    \biggl(\prod_{i=1}^k w(x_i) \omega(P(x_i))^{-1}\biggr) \int_{P(x_1, \ldots, x_k)} T(x_1,
    \ldots, x_k) + \epsilon\, d\omega(y)\\
    &=\langle T[U], w^{\otimes k}\rangle + \epsilon,
\end{split}
\]
hence by taking~$\epsilon \to 0$ we see
that~$\langle T[U], w^{\otimes k}\rangle \geq \langle T, f^{\otimes k}\rangle
\geq \lambda$.  This holds for every~$U$ and~$w$.

Fix a finite nonempty set~$U \subseteq V$.  We now apply a theorem of Powers and
Reznick~\cite[Theorem~1]{PowersR2001} to the homogeneous
polynomial~$p(w) = \langle T[U], w^{\otimes k}\rangle$ of degree~$k$.
Since~$p(w) \geq \lambda > 0$ for all~$w \geq 0$ with~$\one^\tp w = 1$, the
theorem of Powers and Reznick says that all the coefficients of the polynomial
$(\one^\tp w)^r p(w)$ are positive for all~$r > k(k-1)M'/(2\lambda) - k$,
where~$M' = \max\{\, |T(v)| : v \in U^k\,\}$.  This means that all the entries
of~$\rey{\Scal_{r+k}}(T[U] \otimes \one^{\otimes r})$ are nonnegative.
Since~$U$ is any finite subset and since~$M \geq M'$, we are done.
\end{proof}

\begin{proof}[Proof of Theorem~\ref{thm:olga-juan}]
If~$A$ is in the algebraic interior of~$\copc(V)$, then there is~$\lambda > 0$
such that~$A - \lambda J$ is copositive.  So if~$f \in L^2(V)$
is such that~$f \geq 0$ and~$\langle \one, f\rangle = 1$, then
\[
  0 \leq \langle A - \lambda J, f \otimes f\rangle = \langle A, f \otimes
  f\rangle - \lambda,
\]
and so~$\langle A, f \otimes f\rangle \geq \lambda$.  Using
Theorem~\ref{thm:polya-gen} we see that there is~$r$ such that
$\rey{\Scal_{r+2}}(A \otimes \one^{\otimes r}) \geq 0$, and
so~$A \in \kpcone_r(V)$, proving the theorem.
\end{proof}

\end{document}